\newcommand{\E}{\mathbb E}
\newtheorem{theorem}{Theorem}[section]
\newtheorem{lem}[theorem]{Lemma}
\newtheorem{corollary}[theorem]{Corollary}
\newtheorem{conjecture}{Conjecture}[section]
\newtheorem{claim}[theorem]{Claim}
\newtheorem{remark}[theorem]{Remark}
\newcommand{\mc}[1]{\mathcal{#1}}
\newcommand{\mf}[1]{\mathfrak{#1}}
\newcommand{\bb}[1]{\mathbb{#1}}
\newcommand{\brm}[1]{\operatorname{#1}}
\newcommand{\eps}{\varepsilon}
\newcommand{\pl}[1]{\brm{#1}}
\begin{document}
\date{}

\author{S. Norin, L. Yepremyan}

\title{Tur\'{a}n number of generalized triangles}

\begin{abstract}
The family $\Sigma_r$ consists of all $r$-graphs with three edges $D_1,D_2,D_3$ such that $|D_1\cap D_2|=r-1$ and $D_1 \triangle D_2 \subseteq D_3$.  A \emph{generalized triangle}, $\mc{T}_r  \in \Sigma_r$ is an $r$-graph on $\{1,2,\ldots,2r-1\}$ with three edges $D_1, D_2, D_3$, such that
$D_1=\{1,2,\dots,r-1, r\}, D_2= \{1, 2, \dots, r-1, r+1 \}$ and $D_3 = \{r, r+1, \dots, 2r-1\}.$

Frankl and F\"{u}redi conjectured that for all $r\geq 4$, $\brm{ex}(n,\Sigma_r) = \brm{ex}(n,\mc{T}_r )$ for all sufficiently large $n$ and they also proved it for $r=3$. Later, Pikhurko showed that the conjecture holds for $r=4$. In this paper we determine $\brm{ex}(n,\mc{T}_5)$ and $\brm{ex}(n,\mc{T}_6)$ for sufficiently large $n$, proving the conjecture for $r=5,6$.

\end{abstract}
\maketitle
\begin{section}{Introduction}
In this paper we consider $r$-uniform hypergraphs, which we call $r$-graphs for brevity. We denote the vertex set of an $r$-graph $\mc{G}$ by $V(\mc{G})$ and the number of its vertices by $\brm{v}(\mc{G})$.
Let $\mathfrak{F}$ be a family of $r$-graphs. An $r$-graph $\mc{G}$ is $\mathfrak{F}$-free if it does not contain any member of $\mathfrak{F}$ as a subgraph. The Tur\'{a}n function $\brm{ex}(n,\mathfrak{F})$ is the maximum size of an $\mathfrak{F}$-free $r$-graph of order $n$:
\[\brm{ex}(n,\mathfrak{F}) = \max\left\{|\mc{G}| : \brm{v}(\mc{G})=n \textrm{ and } \mc{G} \textrm{ is } \mathfrak{F}-\textrm{free}\right\}.\]
When $\mathfrak{F}$ contains just one element, say $\mathfrak{F}:=\{\mc{F}\}$, we write $\brm{ex}(n, \mc{F}):= \brm{ex}(n,\mathfrak{F})$.
Let $\mathfrak{T}_r$ be the family of all $r$-graphs with three edges such
that one edge contains the symmetric difference of the other two, and let  $\Sigma_r$ be the family of all $r$-graphs with three edges $D_1,D_2,D_3$ such that $|D_1\cap D_2|=r-1$ and $D_1 \cap D_2 \subseteq D_3$.   A \emph{generalized triangle}, $\mc{T}_r \in \Sigma_r$ is an $r$-graph on $[2r-1]$ with three edges $D_1, D_2, D_3$, such that
$D_1=\{1,2,\dots,r-1, r\}, D_2= \{1, 2, \dots, r-1, r+1 \}$ and $D_3 = \{r, r+1, \dots, 2r-1\}.$ 

Note that $\mc{T}_r \in \Sigma_r$ and $\Sigma_r\subseteq \mathfrak{T}_r$. Note also that for $r=2,3$, $\Sigma_r = \mathfrak{T}_r$. As a generalization of Tur\'{a}n's theorem, in \cite{katona} Katona suggested to determine $\brm{ex}(n,\mathfrak{T}_3)$. This question was answered by Bollob\'{a}s in \cite{bollobas}. He showed that for any $n\geq 3$ the complete
balanced $3$-partite $3$-graph (that is, the sizes of any two parts differ at most by one) is the unique extremal graph. Hence, 
\[\brm{ex}(n,\mathfrak{T}_3) = \left\lfloor\frac{n}{3}\right\rfloor\times  \left\lfloor\frac{n+1}{3}\right\rfloor\times  \left\lfloor\frac{n+2}{3}\right\rfloor.\]
Bollob\'{a}s conjectured that the same result holds for all $r\geq 4$.  In \cite{sidorenko}  Sidorenko proved the conjecture for $r=4$, in fact he showed that $\brm{ex}(n,\mathfrak{T}_4)=\brm{ex}(n,\Sigma_4)$ and determined the latter. However, Shearer~\cite{shearer96} showed that Bollob\'{a}s conjecture fails for $ r \geq 10$.

But what can be said about the relation between $\brm{ex}(n,\mc{T}_r )$ and $\brm{ex}(n, \Sigma_r)$? In \cite{erdossimonovits},  Erd\H{o}s and Simonovits proved that for any fixed $r$
\[\brm{ex}(n,\mathcal{T}_r) - \brm{ex}(n,\Sigma_r) = o(n^r).  \]
Later, in \cite{franklfuredi}, Frankl and F\"{u}redi conjectured that $\brm{ex}(n,\mathcal{T}_r) = \brm{ex}(n,\Sigma_r)$ for $n$ sufficiently large:
\begin{conjecture}[P. Frankl, Z. F\"{u}redi, \cite{franklfuredi}]\label{conjecture} For every $r\geq 4$, there exists $n_0:= n_0(r)$ such that for all $n\geq n_0$ 
\[\brm{ex}(n,\mc{T}_r ) = \brm{ex}(n,\Sigma_r).\]
\end{conjecture}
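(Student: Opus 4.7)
The plan is to prove Conjecture~\ref{conjecture} for every $r\geq 4$ by combining a robust stability result with a vertex-by-vertex symmetrization. Let $\mc{H}_r^*$ denote an extremal $\Sigma_r$-free $r$-graph of order $n$. The first step is a quantitative stability statement: I would show that any $\mc{T}_r$-free $r$-graph $\mc{G}$ on $n$ vertices with $|\mc{G}|\geq \brm{ex}(n,\Sigma_r)-o(n^r)$ edges can be modified in $o(n^r)$ edges to become a subgraph of some $\mc{H}_r^*$. The input here is the Erd\H{o}s--Simonovits bound $\brm{ex}(n,\mc{T}_r)-\brm{ex}(n,\Sigma_r)=o(n^r)$ together with a hypergraph removal argument; the removal lemma converts ``almost $\Sigma_r$-free'' into ``$o(n^r)$ edges away from $\Sigma_r$-free.''

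Next, given an $n$-vertex $\mc{T}_r$-free $\mc{G}$ with $|\mc{G}|\geq \brm{ex}(n,\Sigma_r)$, I would use the stability to fix a partition of $V(\mc{G})$ that mirrors the blow-up structure of $\mc{H}_r^*$, identify a small set $B$ of ``exceptional'' vertices (those whose link differs macroscopically from a vertex of $\mc{H}_r^*$), and apply a Zykov-style symmetrization on $V(\mc{G})\setminus B$: replace each non-exceptional vertex by an isomorphic copy of the most-contributing vertex of $\mc{H}_r^*$, verifying that $|\mc{G}|$ does not decrease and $\mc{T}_r$-freeness is preserved. After iterating, the induced subgraph on $V(\mc{G})\setminus B$ becomes a subgraph of $\mc{H}_r^*$, hence $\Sigma_r$-free. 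The contribution of edges meeting $B$ is at most $|B|\cdot\binom{n-1}{r-1}=o(n^r)$, so if $B\neq\emptyset$ we can further delete these edges and strictly improve the count, contradicting extremality unless $B=\emptyset$.

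The final step rules out the presence of any copy of $\Sigma_r$ in $\mc{G}$: given three edges $D_1,D_2,D_3$ with $|D_1\cap D_2|=r-1$ and $D_1\triangle D_2\subseteq D_3$, I would locate --- inside the now-structured $\mc{G}$ --- an $(r-2)$-set $L\subseteq V(\mc{G})\setminus(D_1\cup D_2)$ such that $L\cup(D_1\triangle D_2)$ is an edge of $\mc{G}$. This uses a Lagrangian estimate for the link of the $(r-1)$-set $D_1\cap D_2$: since the link is itself dense in $V(\mc{G})\setminus(D_1\cap D_2)$, a counting argument finds the required $L$, and together with $D_1,D_2$ this triple forms a copy of $\mc{T}_r$ --- a contradiction.

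The main obstacle is the stability step for arbitrary $r$. For $r\leq 6$ the extremal $\Sigma_r$-free graphs have been explicitly identified, but Shearer's counterexamples for $r\geq 10$ show that the natural balanced $r$-partite construction is not extremal, and no complete characterization of $\mc{H}_r^*$ is available. Without such a characterization, one cannot directly anchor the symmetrization step to a specific target graph. I expect the cleanest route is to replace the explicit target $\mc{H}_r^*$ by an abstract ``reduced pattern'' obtained by iterated Zykov symmetrization applied to $\mc{G}$ itself, and to prove that the Lagrangian of any $\mc{T}_r$-free reduced pattern equals that of the best $\Sigma_r$-free one; this is where the genuine difficulty lies, since it amounts to a Lagrangian comparison over the (unclassified) set of stable blow-up patterns for arbitrary $r$.
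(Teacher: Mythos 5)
The statement you are addressing is stated in the paper as a \emph{conjecture}: it is open for general $r$, and the paper proves it only for $r=5,6$ (the cases $r=3,4$ being due to Frankl--F\"uredi and Pikhurko). Your outline follows the same broad template the paper uses for those cases --- removal lemma, stability, symmetrization, Lagrangians --- but it does not close the argument for any $r$, and you concede the decisive point yourself in your last paragraph. The essential missing input is the identification of the Lagrangian-maximizing $\Sigma_r$-free configurations: for $r=5,6$ this is the Frankl--F\"uredi theorem that the $(11,5,4)$ and $(12,6,5)$ Steiner systems are uniquely dense, and without an analogue for general $r$ there is no target $\mc{H}_r^*$ to anchor either the stability statement or the symmetrization. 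Your proposed workaround (compare Lagrangians over all $\mc{T}_r$-free ``reduced patterns'') is precisely the open problem restated, not a reduction of it; Shearer's examples show the answer is not the obvious one for $r\ge 10$.

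Beyond that, two of your intermediate steps are known to fail as stated. First, you claim that a near-extremal $\mc{T}_r$-free graph becomes, after deleting a small exceptional set $B$, a \emph{subgraph} of $\mc{H}_r^*$. The paper explicitly notes (following Pikhurko) that this is false for generalized triangles: one cannot reach a subgraph of the conjectured extremal configuration by removing a small fraction of vertices. This is why Section 6 works with edit distance instead and runs an amplification argument comparing ``bad'' edges of $\mc{F}\setminus\mc{B}$ against ``missing'' edges of $\mc{B}\setminus\mc{F}$. Second, your symmetrization overwrites the link of a non-exceptional vertex with the link of a vertex of $\mc{H}_r^*$ and asserts that $\mc{T}_r$-freeness is preserved; that is not sound. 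Zykov-type symmetrization for this problem may only merge two vertices that lie in no common edge (which is exactly why the paper's symmetrization theorem reduces matters to the subfamily $\mf{F}^*$ of graphs covering pairs); grafting a foreign link onto a vertex can create new copies of $\mc{T}_r$. Finally, your last step aims to show the extremal graph is outright $\Sigma_r$-free by finding an edge containing $D_1\triangle D_2$ and disjoint from $D_1\cup D_2$; this requires the common link of the two vertices of $D_1\triangle D_2$ to be large, whereas $\mc{T}_r$-freeness forces precisely that common link to be small (of order $n^{r-3}$), so the counting argument does not produce the required $(r-2)$-set $L$.
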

In their previous work \cite{franklfuredi1}, Frankl and F\"{u}redi showed that Conjecture~\ref{conjecture}  holds for $r=3$ with very large $n_0$. Keevash and Mubayi \cite{keevashmubayi} presented a different proof of this result; they showed that one can take $n_0=33$.

Recently, the conjecture for $r=4$ was proved by Pikhurko~\cite{pikhurko}. We show that the conjecture holds for $r=5,6$.
\begin{theorem}
\label{thm:maintheorem}There exists $n_0$ such that for all $n \geq n_0$, $\brm{ex}(n, \mc{T}_r ) = \brm{ex}(n,\Sigma_r)$ for $r=5,6$. Moreover, extremal graphs are blowups of the unique $(11, 5, 4)$ and $(12, 6, 5)$ Steiner systems for $r=5$ and $r=6$, respectively.
\end{theorem}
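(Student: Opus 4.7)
Since $\mc{T}_r \in \Sigma_r$, the inequality $\brm{ex}(n, \mc{T}_r) \geq \brm{ex}(n, \Sigma_r)$ is immediate from the lower-bound construction: a balanced blowup of the Steiner system $S(4,5,11)$ or $S(5,6,12)$ is $\Sigma_r$-free because in any Steiner $(v, r, r-1)$-design every $(r-1)$-subset lies in a unique block, so two edges of the blowup meeting in $r-1$ vertices would force their remaining vertices to be two distinct copies of a single blueprint vertex, and no third blowup edge can contain that pair. The content of the theorem is therefore the reverse inequality together with uniqueness of the extremizers. My plan adapts the strategy used by Pikhurko for $r = 4$: first establish a density stability theorem for $\mc{T}_r$-free graphs, then bootstrap to an exact result via local analysis near a Steiner blowup.

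The stability step is the technical heart. I would prove that for every $\varepsilon > 0$ there exists $\delta > 0$ such that every $\mc{T}_r$-free graph $\mc{H}$ on $n$ vertices with $|\mc{H}| \geq \brm{ex}(n, \mc{T}_r) - \delta n^r$ differs in at most $\varepsilon n^r$ edges from a blowup of the relevant Steiner system. Since the Erd\H{o}s--Simonovits bound $\brm{ex}(n,\mc{T}_r) - \brm{ex}(n,\Sigma_r) = o(n^r)$ lets one pass from $\mc{T}_r$-freeness to $\Sigma_r$-freeness, it suffices to prove stability within $\Sigma_r$. The natural approach is a Lagrangian / flag-algebra argument showing that the Lagrangian of any $\Sigma_r$-free structure lies strictly below that of the Steiner blowup unless it coincides with the Steiner blowup itself. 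For $r = 5, 6$ this is expected to require flag-algebra semidefinite-programming certificates tuned to exclude each plausible competitor (partial designs, alternative $M_{11}$- or $M_{12}$-symmetric structures, and the like), and this is where the bulk of the technical work should concentrate.

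Given stability, the exact step follows a familiar template. Let $\mc{G}$ be extremal on $n$ vertices; stability yields a partition $V(\mc{G}) = V_1 \cup \cdots \cup V_N$ with $N \in \{11, 12\}$ that approximates a Steiner blowup. Call $v \in V_i$ \emph{typical} if its link differs in only $o(n^{r-1})$ $(r-1)$-subsets from the expected link (the blowup of the link of $i$ in the Steiner system). Atypical vertices lead to a contradiction via a dichotomy: if such a vertex has high degree, the rigidity of the Steiner system---every ``out-of-place'' $(r-1)$-set lies in few or no blowup edges---produces a copy of $\mc{T}_r$ using two typical blowup edges together with the deviating edge; if it has low degree, deletion followed by insertion of a typical vertex strictly increases the edge count, contradicting extremality. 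Once every vertex is typical, one deduces that $\mc{G}$ must itself be a blowup, and a standard optimization among blowups of the Steiner system identifies the balanced blowup as the unique extremizer.

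The main obstacle throughout is the stability step. The highly transitive actions of $M_{11}$ and $M_{12}$ give rise to many near-extremal candidate configurations whose densities are only marginally smaller than that of the Steiner blowup, and each such family must be individually eliminated; by contrast, the exact step is essentially mechanical once stability is in hand, with the only subtlety being the careful use of Steiner rigidity to force the final edges to respect the partition.
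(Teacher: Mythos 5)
Your high-level skeleton (lower bound from the Steiner blowup, a stability theorem, then a bootstrap to the exact result) matches the paper, and your lower-bound argument is correct. However, you have misidentified where the real difficulty lies and, more seriously, your exact step uses an argument the paper explicitly flags as failing. First, you propose to prove the Lagrangian/density statement (``the Lagrangian of any $\Sigma_r$-free structure lies strictly below that of the Steiner blowup unless it coincides with it'') by flag-algebra SDP certificates and describe this as where the bulk of the technical work should concentrate. In fact this statement is exactly the Frankl--F\"uredi theorem from \cite{franklfuredi} (stated here as Theorem~\ref{thm:FranklFuredi}: $\mc{S}_5,\mc{S}_6$ are uniquely dense), which the paper imports as a black box; no flag algebras are used. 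Second, the weak (classical, $\varepsilon$--$\delta$) stability you formulate is not strong enough to yield the exact Tur\'an number: passing from ``within $\varepsilon n^r$ of a blowup'' to ``has at most as many edges as a blowup'' is precisely where a linear-in-the-distance gain is needed, which is why the paper introduces its sharper notion of stability $|\mc{F}| \le m(\mf{H},n) - \alpha\, d_{\mf{H}}(\mc{F})$ rather than the qualitative one you write down. Third, the reduction from $\mc{T}_r$-freeness to $\Sigma_r$-freeness is not given by the Erd\H{o}s--Simonovits estimate $\brm{ex}(n,\mc{T}_r)-\brm{ex}(n,\Sigma_r)=o(n^r)$ (a bound on extremal numbers, not on individual graphs); you need the Hypergraph Removal Lemma (Theorem~\ref{FisFhatstable}) to delete $o(n^r)$ edges from a given $\mc{T}_r$-free graph and destroy all copies of members of $\Sigma_r$.

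The most substantive gap is in your exact step. You propose the classical dichotomy: atypical vertices are either high-degree (forbidden by forcing $\mc{T}_r$) or low-degree (delete and reinsert to gain edges). The paper explicitly warns, following Pikhurko, that this template breaks down here: one cannot in general remove a small fraction of vertices from a near-extremal $\mc{T}_r$-free graph to land inside a blowup of $\mc{S}$. What actually replaces it is the counting argument in the proof of Theorem~\ref{thm:localstability} (Claim~\ref{amplificationlem}): one compares the number of \emph{bad} edges (present in $\mc{F}$ but not in the nearest blowup $\mc{B}$) against the number of \emph{missing} edges (in $\mc{B}$ but not in $\mc{F}$), through every tuple $I$ with $|I|\le r$, and an induction on $r-|I|$ shows $a(I')\ge c\,b(I)\,n$; iterating this and using $\mc{P}$-transversality of bad edges (Lemma~\ref{transversal}) yields $a(\emptyset)\ge 3 b(\emptyset)$, hence the edge count is strictly below $m(\mf{B},n)$ by a constant times the edit distance. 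This argument has no analogue in your proposal, and the technical bulk of the paper (Lemma~\ref{theorem2}, which locates the blowup $\mc{B}_0$ with links close to those of $\mc{F}$ at every vertex, and Claim~\ref{amplificationlem}) is exactly the part you gloss over as ``essentially mechanical.''
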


The proof of Theorem~\ref{thm:maintheorem} uses several classical tools, which have been widely applied to Tur\'an type problems since the early days of the subject. (A recent survey by Keevash~\cite{Kee11} gives an excellent overview of these techniques.)  Let us  now present  a brief outline of the proof.

First, we use the density result of Frankl and F\"uredi~\cite{franklfuredi}, who have determined  $\brm{ex}(n, \mc{T}_r )/ \binom{n}{r}$ asymptotically for $r=5,6$. The result of~\cite{franklfuredi} relies on the application of the Lagrangian method, which goes back to the proof of Tur\'an theorem by Motzkin and Straus~\cite{MotStr65}. We further utilize the Lagrangian method; in Section~\ref{sec:notation} we develop a set  of tools for transferring results from the weighted (Lagrangian) to the unweighted setting and in Section~\ref{sec:lagrangian} we prove  a stability result which essentially relies on the continuity properties of the weighted setting.

Second, in Section~\ref{sec:symmetrization} we employ and streamline the symmetrization procedure, used earlier by Sidorenko \cite{sidorenko}, Pikhurko \cite{pikhurko} and others. We prove a generic theorem which allows one, under certain conditions, to derive a global stability result from its local version. This theorem has potential further applications, in particular, we use it in~\cite{lagrangian}.

Third, we derive Theorem~\ref{thm:maintheorem} from a stability argument, showing that sufficiently dense $\mc{T}_r$-free graphs are close (in the edit distance) to the blowups of the respective Steiner systems for $r=5,6$. The stability technique for establishing structural information in the extremal setting originated with  Erd\"os-Simonovits Stability theorem~\cite{erdossimonovits}. We illustrate some of our terminology and techniques by giving a proof of that theorem in Section~\ref{sec:example}.

Section~\ref{sec:local} contains the bulk of the technical work in the paper, establishing  Theorem~\ref{thm:maintheorem} for graphs which are close to the blowups of the respective Steiner systems. Here we also need to extend the classical arguments. In most of the known applications of the stability method as an intermediate step one shows that
if the graph under consideration has density close to the maximum then it can be transformed into a subgraph of the conjectured extremal configuration (a blowup of the respective Steiner system, in our case) by removal of a small fraction of vertices. As noted in~\cite{pikhurko}, this is false in our case, and an additional counting argument is required. 

Finally, in Section~\ref{sec:finale} we combine the results of the previous sections to finish the proof of Theorem~\ref{thm:maintheorem}.
\end{section}
\section{Notation and Preliminary Results}\label{sec:notation}
\subsection{Notation}
For an $r$-graph $\mc{F}$ and $v \in V(\mc{F})$, we denote by $ L_{\mc{F}}(v)$  the \emph{link of the vertex \(v\)}: $$ L_{\mc{F}}(v):=\{I \in (V(\mc{F}))^{(r-1)} \: | \: I \cup \{v\} \in \mc{F} \}.$$  More generally, for  \(I\subseteq V(\mc{F})\) the \emph{link $L_{\mc{F}}(I)$  of \(I\)} is defined as 
$$L_{\mc{F}}(I) := \{J\subseteq V(\mc{F}) \: | \:  J \cap I = \emptyset, I \cup J\in \mc{F}\}.$$
In the above mentioned notation, we will skip the index $\mc{F}$ whenever $\mc{F}$ is understood from the context.

We say that an $r$-graph $\mc{G}$ is obtained from an $r$-graph $\mc{F}$ by \emph{cloning a vertex $v$ to a set $W$} if $\mc{F} \subseteq \mc{G}$, $V(\mc{G}) \setminus V(\mc{F}) = W\setminus\{v\}$ and $ L_{\mc{G}}(w)= L_{\mc{F}}(v)$ for every $w \in W$. We say that $\mc{G}$ is \emph{a blowup of $\mc{F}$} if $\mc{G}$ is isomorphic to an $r$-graph obtained from $\mc{F}$ by repeatedly cloning and deleting vertices. We denote the set of all blowups of $\mc{F}$ by $\mf{B}(\mc{F})$.
We say that a family $\mf{F}$ of $r$-graphs is \emph{clonable} if every blowup of any $r$-graph in $\mf{F}$, also lies in $\mf{F}$. The Hypergraph Removal Lemma~\cite{Gow07,RodSko06} allows one to restrict many arguments related to Tur\'an-type problems to clonable families, and some of the more general results of this paper hold for all clonable families.

Let us introduce another class of hypergraph families, which are important for us. For a family of $r$-graphs $\mf{F}$, let 
$$m(\mf{F},n):=\max_{\substack{\mc{F} \in \mf{F} \\ \brm{v}({\mc{F}}) = n}} |\mc{F}|.$$
We say that $\mf{F}$ is \emph{smooth} if there exists $\lim_{n \to \infty}m(\mf{F},n)/n^r$. For a smooth family $\mf{F}$ we denote the above limit by $m(\mf{F})$. Our first lemma establishes a connection between clonable and smooth families.

\begin{lem}\label{lem:smooth}
Every clonable family is smooth.
\end{lem}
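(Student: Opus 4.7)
My plan is to identify the candidate limit explicitly and then establish it via matching asymptotic upper and lower bounds on $m(\mf{F},n)/n^r$. I would set
\[
c(\mf{F}) := \sup_{\mc{F} \in \mf{F}} \frac{|\mc{F}|}{\brm{v}(\mc{F})^r},
\]
which is finite (at most $1/r!$) since $|\mc{F}| \leq \binom{\brm{v}(\mc{F})}{r}$, and aim to show that $m(\mf{F},n)/n^r \to c(\mf{F})$ as $n \to \infty$, which is equivalent to the conclusion of the lemma.

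The upper bound $\limsup_{n\to\infty} m(\mf{F},n)/n^r \leq c(\mf{F})$ is immediate from the definitions, since $m(\mf{F},n)/n^r$ is realized by some $\mc{F} \in \mf{F}$ on $n$ vertices and is therefore one of the ratios over which $c(\mf{F})$ is taken. For the matching lower bound I would exploit clonability via a balanced-blowup construction: fix any $\mc{F}_0 \in \mf{F}$ with $n_0 = \brm{v}(\mc{F}_0)$ and $e_0 = |\mc{F}_0|$, and for each $n \geq n_0$ write $n = qn_0 + s$ with $0 \leq s < n_0$ and form the blowup $\mc{F}_0^{(n)}$ in which $s$ of the vertices of $\mc{F}_0$ are cloned to sets of size $q+1$ and the remaining $n_0 - s$ to sets of size $q$. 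Clonability guarantees $\mc{F}_0^{(n)} \in \mf{F}$, and the elementary edge count
\[
|\mc{F}_0^{(n)}| \;=\; \sum_{\{v_{i_1},\ldots,v_{i_r}\} \in \mc{F}_0} s_{i_1}\cdots s_{i_r} \;\geq\; e_0\, q^r
\]
would give $m(\mf{F},n)/n^r \geq e_0\lfloor n/n_0\rfloor^r/n^r \to e_0/n_0^r$ as $n \to \infty$. Taking the supremum over $\mc{F}_0$ then yields $\liminf_{n\to\infty} m(\mf{F},n)/n^r \geq c(\mf{F})$, matching the upper bound.

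I do not foresee any real obstacle. The argument hinges on the single observation that clonability lets one realize the density of any single $\mc{F}_0 \in \mf{F}$ as the asymptotic edge-density of a whole sequence of members of $\mf{F}$ (its balanced blowups), and the rest is the routine product lower bound for the edge count of such a blowup.
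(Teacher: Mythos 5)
Your proof is correct and takes essentially the same approach as the paper: both establish the lemma by blowing up a near-extremal member of $\mf{F}$ and using clonability to show its density persists asymptotically, with the upper bound being trivial. The only cosmetic differences are that you identify the limit explicitly as $\sup_{\mc{F}\in\mf{F}}|\mc{F}|/\brm{v}(\mc{F})^r$ rather than as $\limsup_n m(\mf{F},n)/n^r$, and you use a balanced blowup on exactly $n$ vertices where the paper clones uniformly by a factor $k=\lfloor n/s\rfloor$ and absorbs the leftover vertices.
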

\begin{proof}Let $\mf{F}$ be a clonable family of $r$-graphs. Let
$$d:=\limsup_{n \to \infty} \frac{m(\mf{F},n)}{n^r}.$$ We need to show that for every $0<\eps<1$ there exists $N>0$ such that $ m(\mf{F},n)/n^r \geq d-\eps$ for every $n \geq N$. Let $\mc{F} \in \mf{F}$ be chosen so that $|\mc{F}| \geq (d -\delta)\brm{v}(\mc{F})^r$ for $\delta:=\eps/(d+1)$. Let $s:=\brm{v}(\mc{F})$. For a positive integer $k$, let $\mc{F}^{(k)}$ be an $r$-graph obtained by cloning every vertex of $\mc{F}$ to a set of size $k$. Then $\mc{F}^{(k)} \in \mf{F}$, $\brm{v}(\mc{F}^{(k)})=ks$ and $|\mc{F}^{(k)}|=k^r|\mc{F}| \geq  (d -\delta)(ks)^r$. Therefore, for  $n \geq (s-1)r/\delta$, we have 
\begin{align*}
\frac {m(\mf{F},n)}{n^r} &\geq (d -\delta)\left(\frac{s\lfloor n/s \rfloor}{n}\right)^r \geq (d -\delta)\left(1 -\frac{s-1}{n}\right)^r \\ &\geq (d -\delta)\left(1 -\frac{(s-1)r}{n}\right)\geq (d-\delta)(1-\delta) \geq d -\eps,
\end{align*}
as desired.
\end{proof}

\subsection{Stability}
\label{sec:stability}
In this subsection we formalize and extend the notion of stability, which is ubiqutious in the analysis of Tur\'an-type problems.

Let $\mf{F}$ and $\mf{H}$ be two families of $r$-graphs. The definitions in this subsection will be typically applied to situations when $\mf{F}$ is the family whose maximum density we are trying to determine, $\mf{H}$ is a substantially more structured subfamily of $\mf{F}$, and our goal is to show that $m(\mf{F},n)=m(\mf{H},n)$ for sufficiently large $n$.
We define \emph{the distance $d_{\mf{F}}(\mc{F})$ from an $r$-graph $\mc{F}$ to a family $\mf{F}$} as 
\[ d_{\mf{F}}(\mc{F}):=\min_{\substack{\mc{F'}\in\mf{F} \\ \brm{v}(\mc{F}) = \brm{v}(\mc{F}')}}{|\mc{F}\triangle\mc{F}'|}.\]

For $\eps, \alpha>0 $, we say that $\mf{F}$ is $(\mf{H}, \eps, \alpha)$-\emph{locally stable} if there exists $n_0 \in \mathbb{N}$ such that for all $\mc{F}\in\mf{F}$ with $\brm{v}(\mc{F}) =n \geq n_0$ and $d_{\mf{H}}(\mc{F})\leq \eps n^r$ we have
\begin{equation}\label{eq:localstability}
|\mc{F}|\leq m(\mf{H},n) - \alpha d_{\mf{H}}(\mc{F}).
\end{equation}
We say that $\mf{F}$ is $\mf{H}$-\emph{locally stable}  if $\mf{F}$ is $(\mf{H}, \eps, \alpha)$-locally stable for some choice of $\eps$ and $\alpha$. We say that $\mf{F}$ is $(\mf{H}, \alpha)$-\emph{stable} if it is $(\mf{H}, 1, \alpha)$-\emph{locally stable}, that is the inequality (\ref{eq:localstability}) holds for all  $\mc{F}\in\mf{F}$ with $\brm{v}(\mc{F}) =n \geq n_0$. We say that \(\mf{F}\) is \(\mf{H}\)-stable, if \(\mf{F}\) is \((\mf{H}, \alpha)\)-stable for some choice of \(\alpha\). 

\begin{remark}\label{rem:stability} The classical notion of stability differs from the one we introduced here. To parallel that notion, we could define $\mf{F}$ to be $\mf{H}$-stable if for every $\eps>0$ there exists $\delta>0$ such that for all $\mc{F}\in\mf{F}$ with $v(\mc{F}) =n$ and $|\mc{F}|\geq m(\mf{H},n) - \delta n^r$ one has $d_{\mf{H}}(\mc{F}) \leq \eps n^r$. Our notion of stability is stronger in two respects:
\begin{itemize}
\item It implies linear dependence between $\delta$ and $\eps$ in the above definition.
\item It is meaningful in the regime $d_{\mf{H}}(\mc{F}) = o(n^r)$, allowing us to compute Tur\'an numbers exactly. Note that if $\mf{F}$  is $\mf{H}$-stable using our definition then  $m(\mf{H},n) \geq m(\mf{F},n)$ for sufficiently large $n$.
\end{itemize}  
We refer to our notion of stability as simply ``stability" as opposed to, for example, ``sharp stability", for brevity.
\end{remark}

\subsection{Vertex local stability} We also introduce a weaker version of stability  (i.e. the requirements imposed on the family are stronger), however, in certain cases,  as we will see, stability (as defined in Section~\ref{sec:stability}) can be derived from this version.

Let $\mf{H}$ be a smooth family of $r$-graphs.
For $\eps, \alpha>0$, we say that a family $\mf{F}$ of $r$-graphs is  $(\mf{H}, \eps, \alpha)$-\emph{vertex locally stable} if there exists $n_0 \in \mathbb{N}$ such that for all $\mc{F}\in\mf{F}$ with $\brm{v}(\mc{F}) =n \geq n_0$,  $d_{\mf{H}}(\mc{F})\leq \eps n^{r}$, and 
$| L_{\mc{F}}(v)| \geq \left(rm(\mf{H}) - \eps \right) n^{r-1}$  for every $v \in V(\mc{F})$, we have
\[|\mc{F}|\leq m(\mf{H},n) - \alpha d_{\mf{H}}(\mc{F}).\]
We say that $\mf{F}$ is  $\mf{H}$-\emph{vertex locally stable} if $\mf{F}$ is  $(\mf{H}, \eps, \alpha)$-vertex locally stable for some $\eps,\alpha$. In some cases vertex local stability implies local stability, which informally means that when proving inequality~(\ref{eq:localstability}) for an $r$-graph $\mc{F}$, we can assume that all the vertices of $\mc{F}$ have large degree.
 
\subsection{Weighted hypergraphs and Lagrangians}

Let $\mc{F}$ be an $r$-graph.  Let $\mc{M}(\mc{F})$ denote the set of probability distributions on $V(\mc{F})$, that is, the set of functions $\mu: V(\mc{F}) \to [0,1]$ such that $\sum_{v \in V(\mc{F})}\mu(v)=1$. We call a pair $(\mc{F},\mu)$, where $\mu \in \mc{M}(\mc{F})$, a \emph{weighted  graph}.  Two weighted graphs $(\mc{F},\mu)$ and $(\mc{F}',\mu')$ are \emph{isomorphic} if there exists an isomorphism $\varphi: V(\mc{F}) \to V(\mc{F}')$ between $\mc{F}$ and $\mc{F}'$ such that $\mu'(\varphi(v))=\mu(v)$ for every $v \in V(\mc{F})$. As in the case of unweighted graphs, we generally do not distinguish between isomorphic weighted graphs. 

We define \emph{the density $\lambda(\mc{F},\mu)$ of a weighted graph  $(\mc{F},\mu)$}, by  $$\lambda(\mc{F},\mu):=\sum_{F \in \mc{F} }{\prod_{v \in F}\mu(v)}.$$ The \emph{Lagrangian $\lambda(\mc{F})$ of an $r$-graph $\mc{F}$} is defined by $$\lambda(\mc{F}):=\max_{\mu \in \mc{M}(\mc{F})}\lambda(\mc{F},\mu).$$ For a family of $r$-graphs $\mf{F}$, let $\lambda(\mf{F}):=\sup_{\mc{F} \in \mf{F}}\lambda(\mc{F})$.

If an $r$-graph $\mc{F}'$ is obtained from an $r$-graph $\mc{F}$ by cloning a vertex $u \in V(\mc{F})$ to a set $W$, $\mu \in \mc{M}(\mc{F})$, $\mu' \in \mc{M}(\mc{F'})$, then we say that $(\mc{F}', \mu')$ is \emph{a one vertex blowup of 
$(\mc{F}, \mu)$}, if $\mu(v)=\mu'(v)$ for all $v \in V(\mc{F}) \setminus \{u\}$ and $\mu(u)=\sum_{w \in W}\mu'(w)$. We say that  $(\mc{F}', \mu')$ is \emph{a blowup} of $(\mc{F}, \mu)$ if  $(\mc{F}', \mu')$ is isomorphic to a weighted $r$-graph which can be obtained from $(\mc{F}, \mu)$ by repeatedly taking one vertex blowups. We denote by $\mf{B}(\mc{F},\mu)$ the family of weighted graphs isomorphic to the blowups of $(\mc{F},\mu)$. 

\begin{remark}\label{rem:blowup}
An $r$-graph $\mc{F}'$ is a blowup of $\mc{F}$ with $V(\mc{F})=[n]$ if and only if there exists a partition $\{P_1,P_2,\ldots,P_n\} $ of $V(\mc{F}')$ such that $\{v_1,v_2,\ldots,v_r\} \in \mc{F'}$, $v_j \in P_{i_j}$ for $j \in [r]$ if and only if $\{i_1,i_2,\ldots,i_r\} \in \mc{F}$. When $\mc{F}$ is understood from the context we refer to $\mc{P}=\{P_1,P_2,\ldots,P_n\}$ as \emph{a blowup partition of $\mc{F'}$}.
If $\mc{F}$ \emph{covers pairs}, that is, for every \(u,v\in V(\mc{F})\), there exists some $F\in \mc{F}$ containing $u$ and $v$,  then
the blowup partition is unique up to the order of parts and its elements are the maximal independent sets in $\mc{F}$. 

Let us also note that a weighted $r$-graph $(\mc{F}', \mu')$ is a blowup of $(\mc{F}, \mu)$ if and only if there exists a partition as above with the additional property $\sum_{v \in P_i}\mu'(v)=\mu(i)$, for every $i \in [n]$. 
\end{remark}

Next we  define the distance between weighted graphs. If $\mc{F}_1,\mc{F}_2$ are two $r$-graphs such that $V(\mc{F}_1)=V(\mc{F}_2)$
and $\mu \in \mc{M}(\mc{F}_1)(=\mc{M}(\mc{F}_2))$, we define
$$d'(\mc{F}_1,\mc{F}_2,\mu):=\sum_{F \in \mc{F}_1 \triangle \mc{F}_2}\prod_{v\in F}{\mu(v)}.$$ 
 We define \emph{the distance between general weighted $r$-graphs $(\mc{F}_1, \mu_1)$ and $(\mc{F}_2, \mu_2)$}, as $$d((\mc{F}_1, \mu_1), (\mc{F}_2, \mu_2)):=\inf d'(\mc{F}'_1,\mc{F}'_2,\mu),$$
where the infimum is taken over all $r$-graphs $\mc{F}'_1,\mc{F}'_2,$ with  $V(\mc{F}'_1)=V(\mc{F}'_2)$ and  $\mu \in \mc{M}(\mc{F}_1')=\mc{M}(\mc{F}_2')$ satisfying $(\mc{F}'_i,\mu) \in \mf{B}(\mc{F}_i,\mu_i)$ for $i=1,2$. If $(\mc{F}, \mu)$ is a weighted $r$-graph  and $\mf{F}$ is a family of $r$-graphs we define \emph{the distance  from  $(\mc{F}, \mu)$ to $\mf{F}$} as $$d^w_{\mf{F}}(\mc{F}, \mu) :=\inf_{\mc{F'} \in \mf{F}, \mu' \in \mc{M}(\mc{F}')}d((\mc{F}, \mu), (\mc{F}', \mu')).$$ 
We write \(d_{\mf{F}}(\mc{F}, \mu)\) instead of \(d_{\mf{F}}^{w}(\mc{F}, \mu)\), except for the cases when we want to emphasize the difference between weighted and unweighted distance.

\begin{lem}\label{lem:distancevsweight} For any family \(\mf{H}\), if  \(\mc{F}\) is a graph  with $\brm{v}(\mc{F})=n$, and $\xi \in \mc{M}(\mc{F})$ then
$$d_{\mf{H}}(\mc{F}) \leq \frac{r! n}{n-r^2} \binom{n}{r}d^w_{\mf{H}}(\mc{F},\xi).$$
\end{lem}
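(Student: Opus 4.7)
The plan is to build an element $\mc{G}\in \mf{H}$ on $V(\mc{F})$ by pulling back, via a random injection, the common blowup that witnesses the weighted distance, and then to estimate the resulting unweighted symmetric difference in expectation. First, by the definition of $d^w_\mf{H}(\mc{F},\xi)$, for any $\eps>0$ one can fix $\mc{H}\in\mf{H}$, $\mu_H\in \mc{M}(\mc{H})$, and common blowups $(\mc{F}'_1,\mu)\in \mf{B}(\mc{F},\xi)$, $(\mc{F}'_2,\mu)\in \mf{B}(\mc{H},\mu_H)$ on a common vertex set $V$ satisfying $d'(\mc{F}'_1,\mc{F}'_2,\mu)\le d^w_\mf{H}(\mc{F},\xi)+\eps$. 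Let $\{P_i\}_{i\in V(\mc{F})}$ be the partition of $V$ into the parts of the $\mc{F}$-side of the blowup, so that $\mu(P_i)=\xi(i)$.

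Next, I would sample a random injection $\phi\colon V(\mc{F})\to V$ with $\phi(i)\in P_i$, drawing $\phi(i)$ independently from $\mu$ restricted to $P_i$ and normalised (with arbitrary choice on zero-weight parts). Since $\phi(i)\in P_i$ for each $i$, the blowup structure forces $\phi^{-1}(\mc{F}'_1)=\mc{F}$, while $\phi^{-1}(\mc{F}'_2)$ is a blowup of $\mc{H}$ on $n$ vertices, which belongs to $\mf{H}$ whenever $\mf{H}$ is closed under blowups (a condition satisfied in the applications of the lemma in this paper). Thus for every realisation of $\phi$,
\[
d_\mf{H}(\mc{F})\le |\mc{F}\triangle \phi^{-1}(\mc{F}'_2)|=|\phi^{-1}(\mc{F}'_1\triangle \mc{F}'_2)|.
\]

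Taking expectation and expanding by linearity, the contribution of each $S=\{v_1,\dots,v_r\}\in \mc{F}'_1\triangle \mc{F}'_2$ whose vertices lie in distinct parts $P_{i_1},\dots,P_{i_r}$ is $\prod_k \mu(v_k)/\xi(i_k)$. The key step is to bound this total by $\frac{r!\,n}{n-r^2}\binom{n}{r} d'(\mc{F}'_1,\mc{F}'_2,\mu)$. The factor $r!\binom{n}{r}$ comes from the standard conversion between the weighted sum $\sum_S \prod_k \mu(v_k)=d'$ and a count of $r$-subsets of $V(\mc{F})$, while the additional factor $n/(n-r^2)$ reflects the elementary inequality $n^r\le \frac{r!\,n}{n-r^2}\binom{n}{r}$, equivalent to $\prod_{k=0}^{r-1}(1-k/n)\ge (n-r^2)/n$---the probability that $r$ indices drawn with replacement from $[n]$ are pairwise distinct.

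The main obstacle is controlling the local factor $\prod_k 1/\xi(i_k)$ produced by the conditional sampling; to convert it into a global $n$-factor, one must refine the common blowup so that $\mu$ is sufficiently close to uniform on each part, and carefully handle $r$-subsets $S$ of $\mc{F}'_1\triangle \mc{F}'_2$ whose vertices fail to split across distinct parts (which the random injection never hits). Once the expectation bound is established, the averaging principle produces a deterministic $\phi$ with $|\mc{F}\triangle \phi^{-1}(\mc{F}'_2)|\le \frac{r!\,n}{n-r^2}\binom{n}{r}\bigl(d^w_\mf{H}(\mc{F},\xi)+\eps\bigr)$, and letting $\eps\to 0$ concludes the proof.
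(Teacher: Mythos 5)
Your overall strategy, sampling a random injection $\phi$ that respects the blowup partition, observing that $\phi^{-1}(\mc{F}'_1)=\mc{F}$ while $\phi^{-1}(\mc{F}'_2)$ is a blowup of a member of $\mf{H}$, and bounding $\E\bigl[|\phi^{-1}(\mc{F}'_1\triangle\mc{F}'_2)|\bigr]$, is exactly the paper's argument (the paper phrases the same sampling as choosing a random representative $v_i\in P_i$ independently for each $i$ and taking the induced subgraph). You also rightly note that one needs $\mf{H}$ to be closed under blowups for $\phi^{-1}(\mc{F}'_2)$ to land in $\mf{H}$; the paper's ``any family $\mf{H}$'' is indeed too loose, but the lemma is only applied with clonable $\mf{H}$, so no harm is done.

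The step you call the main obstacle, however, cannot be handled the way you propose. The local factor $\prod_k 1/\xi(i_k)$ depends only on the masses $\xi(i_k)=\mu(P_{i_k})$ of the blowup parts, and these are pinned down by the given $\xi\in\mc{M}(\mc{F})$. Refining the common blowup splits parts and redistributes weight \emph{within} them but leaves every $\xi(i_k)$ unchanged, so you cannot ``make $\mu$ close to uniform on each part'' to turn this into a global $n$-factor. For a genuinely non-uniform $\xi$ the stated inequality is in fact false: take $\mf{H}$ the family of empty graphs, $\mc{F}$ a single $r$-edge plus isolated vertices on $n>r^2$ vertices, and let $\xi$ concentrate mass $1-\delta$ on one vertex of the edge. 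Then $d^w_{\mf{H}}(\mc{F},\xi)=\lambda(\mc{F},\xi)\to 0$ as $\delta\to 0$, while $d_{\mf{H}}(\mc{F})=1$ stays fixed, contradicting the claimed bound. The intended reading, and the one the paper's proof uses (it computes $\Pr[A]=\prod_{i=0}^{r-1}(1-i/n)$, an identity that holds only if every part has mass exactly $1/n$), is that $\xi=\xi_{\mc{F}}$ is the uniform distribution; then $\prod_k 1/\xi(i_k)=n^r$ identically for every transversal $r$-set, and your concluding inequality $n^r\le \frac{r!\,n}{n-r^2}\binom{n}{r}$, equivalent to $\prod_{i=0}^{r-1}(1-i/n)\ge 1-r^2/n$, finishes the argument exactly as in the paper.
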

\begin{proof} Choose an arbitrary $0<\eps<1$ and let  $d:=d^w_{\mf{H}}(\mc{F},\xi)$. Let $(\mc{B},\mu)$ be a blowup of $(\mc{F},\xi)$ such that there exists  $\mc{H} \in \mf{H}$ satisfying $d((\mc{B},\mu),(\mc{H},\mu)) \leq d+\eps$.

Let $\mc{P}=\{P_1,P_2,\ldots,P_n\}$ be a blowup partition of $V(\mc{B})$. Suppose $v_1,v_2,\ldots,v_r$ are chosen independently at random from $V(\mc{H})$ according to the distribution $\mu$. Let $A$ be the event that
that $\{v_1,v_2,\ldots,v_r\}$ is a \emph{transversal} of $\mc{P}$, that is, 
$|\{v_1,v_2,\ldots,v_r\} \cap P_j| \leq 1$ for every $P_j \in \mc{P}$.
We have $$\Pr[A]= \prod_{i=0}^{r-1}\left(1-\frac{i}{n}\right) \geq \left(1-\frac{r}{n}\right)^r \geq 1-\frac{r^2}{n}.$$
Thus, it follows that
\begin{equation}\label{eq:distancevsweight1}
\Pr\left[\{v_1,v_2,\ldots,v_r\} \in \mc{B} \triangle \mc{H}  \:|\: A\right] \leq \frac{r!(d+\eps)n}{n-r^2}.
\end{equation}

Now  consider $v_1,v_2,\ldots,v_n$ to be chosen independently at random  according to the distribution given by $\mu$, 
such that $v_i \in P_i$ for every $i \in [n]$. Let $\mc{H}'$ and $\mc{B}^{'}$  be the random subgraphs induced by $\{v_1,v_2,\ldots,v_n\}$, respectively, in $\mc{H}$ and $\mc{B}$. It follows from (\ref{eq:distancevsweight1}) and the linearity of expectation that \begin{equation}\label{eq:distancevsweight2}
\E[|\mc{B}' \triangle \mc{H}'|] \leq \frac{r!(d+\eps)n}{n-r^2} \binom{n}{r}.
\end{equation}
As $\mc{B}'$ is isomorphic to $\mc{F}$, the inequality (\ref{eq:distancevsweight2}) implies the lemma.
\end{proof}

\subsection{Weighted Stability}
\label{sec:weghtedstability}
In this section we introduce the notion of weighted stability and relate it to (unweighted) stability. Let \(\mf{F}, \mf{H}\) be two graph families. For $\eps, \alpha>0 $, we say that $\mf{F}$ is $(\mf{H}, \eps, \alpha)$-\emph{weight locally stable} if for every $\mc{F} \in \mf{F}, \mu \in \mc{M}(\mc{F})$ such that
$d_{\mf{H}}(\mc{F}, \mu)\leq \eps$, we have $$\lambda(\mc{F}, \mu) \leq \lambda(\mf{H})-\alpha d_{\mf{H}}(\mc{F}, \mu).$$
We say that $\mf{F}$ is $\mf{H}$-\emph{weight locally stable} if $\mf{F}$ is $(\mf{H}, \eps, \alpha)$-weight locally stable for some choice of $\eps$ and $\alpha$. 

We say that $\mf{F}$ is $(\mf{H}, \alpha)$-\emph{weight stable} if $\mf{F}$ is $(\mf{H}, 1, \alpha)$-\emph{weight locally stable}. We say that $\mf{F}$ is $\mf{H}$-\emph{weight stable} if $\mf{F}$ is $(\mf{H}, \alpha)$-weight stable for some choice of $\alpha$. 

Finally, for weighted graphs we would also consider the direct analogue of the classical notion of stability discussed in Remark~\ref{rem:stability}. We  say that $\mf{F}$ is $\mf{H}$-\emph{weakly weight stable} if for every $\eps>0$ there exists $\delta>0$ such that for every $\mc{F} \in \mf{F}$ and $\mu \in \mc{M}(\mc{F})$ if $\lambda(\mc{F},\mu) \geq \lambda(\mf{H})-\delta$, then  $d_{\mf{H}}(\mc{F}, \mu) \leq \eps$. The following lemma, establishes a connection between weighted and unweighted stability.

\begin{lem}\label{lem:localplusweight} 
Let \(\mf{H}\) be a clonable family. If the family $\mf{F}$ is $\mf{H}$-locally stable and $\mf{H}$-weight stable, then $\mf{F}$ is $\mf{H}$-stable.
\end{lem}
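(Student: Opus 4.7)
The plan is as follows. Let $(\eps_1,\alpha_1)$ be the parameters witnessing $\mf{H}$-local stability of $\mf{F}$, and let $\alpha_2$ witness $\mf{H}$-weight stability. I would set $\alpha:=\min\{\alpha_1,\alpha_2/2\}$. For any $\mc{F}\in\mf{F}$ with $n=\brm{v}(\mc{F})$ sufficiently large, the inequality $|\mc{F}|\leq m(\mf{H},n)-\alpha d_{\mf{H}}(\mc{F})$ is verified by splitting on whether $d_{\mf{H}}(\mc{F})\leq\eps_1 n^r$ or not. In the near case, local stability delivers the bound immediately since $\alpha\leq\alpha_1$.

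For the far case, $d_{\mf{H}}(\mc{F})>\eps_1 n^r$, I would apply weight stability to $(\mc{F},\mu)$ with $\mu$ the uniform measure on $V(\mc{F})$. Since $\lambda(\mc{F},\mu)=|\mc{F}|/n^r$ for uniform $\mu$, weight stability gives
\[
\frac{|\mc{F}|}{n^r}\;\leq\;\lambda(\mf{H})-\alpha_2\, d^w_{\mf{H}}(\mc{F},\mu).
\]
Lemma~\ref{lem:distancevsweight}, combined with the trivial bound $r!\binom{n}{r}\leq n^r$, rearranges to $d^w_{\mf{H}}(\mc{F},\mu)\geq (1-r^2/n)\,d_{\mf{H}}(\mc{F})/n^r$, and so
\[
|\mc{F}|\;\leq\;\lambda(\mf{H})\,n^r-\alpha_2\bigl(1-r^2/n\bigr)d_{\mf{H}}(\mc{F}).
\]

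The remaining step is to replace $\lambda(\mf{H})n^r$ by $m(\mf{H},n)$ up to a lower order error. Here I would verify that $m(\mf{H})=\lambda(\mf{H})$ for a clonable family: the inequality $m(\mf{H})\leq\lambda(\mf{H})$ is obtained by evaluating the Lagrangian of any $\mc{H}\in\mf{H}$ at the uniform weight, while the reverse inequality comes from rationalizing a near-optimal weighting of some $(\mc{H},\mu)$ and passing to the corresponding integer blowup, which again lies in $\mf{H}$ by clonability. Combined with Lemma~\ref{lem:smooth}, this yields $m(\mf{H},n)\geq(\lambda(\mf{H})-\gamma)n^r$ for any fixed $\gamma>0$ once $n$ is large. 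Taking $\gamma<\alpha_2\eps_1/4$ and $n_0$ large enough that $r^2/n<1/4$, the $\gamma n^r$ slack gets absorbed into $(\alpha_2/4)\,d_{\mf{H}}(\mc{F})$ by $d_{\mf{H}}(\mc{F})>\eps_1 n^r$, yielding $|\mc{F}|\leq m(\mf{H},n)-(\alpha_2/2)d_{\mf{H}}(\mc{F})$, as required.

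The one non-routine ingredient is the identity $\lambda(\mf{H})=m(\mf{H})$, specifically the direction $\lambda(\mf{H})\leq m(\mf{H})$, which is the place where clonability of $\mf{H}$ genuinely enters the argument. All other steps are a single application each of weight stability at uniform weight and of Lemma~\ref{lem:distancevsweight}, followed by a routine choice of constants to glue the near and far cases together.
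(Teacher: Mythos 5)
Your proof is correct and follows essentially the same route as the paper's: split on whether $d_{\mf{H}}(\mc{F})\leq\eps n^r$, handle the near case by local stability, and in the far case apply weight stability at the uniform measure together with Lemma~\ref{lem:distancevsweight} and smoothness. Your explicit verification that $\lambda(\mf{H})=m(\mf{H})$ for clonable $\mf{H}$ is a welcome addition, since the paper's displayed chain silently replaces $\lambda(\mf{H})$ (from the definition of weight stability) by $m(\mf{H})$ without comment.
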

\begin{proof}
Let $\alpha,\eps>0$ be such that the family $\mf{F}$ is $(\mf{H},\eps,\alpha)$-locally stable and $(\mf{H},\alpha)$-weight stable. We will show that $\mf{F}$ is $(\mf{H},\alpha/2)$-stable, that is, for every $\mc{F} \in \mf{F}$ with $n:=\brm{v}(\mc{F})$ sufficiently large,
\begin{equation}\label{eq:localplusweight}
|\mc{F}| \leq m(\mf{H},n)-\frac{\alpha}{2} d_{\mf{H}}(\mc{F}).
\end{equation}

We can assume that $d_{\mf{H}}(\mc{F})>\eps n^r$, since otherwise (\ref{eq:localplusweight}) holds because $\mf{F}$ is $(\mf{H},\eps,\alpha)$-locally stable. 

By Lemma~\ref{lem:smooth} the family $\mf{H}$ is smooth. We choose  $n$ to be sufficiently large so that
$
1 - r^2/n \geq 1/2
$
and 
$$m(\mf{H},n) \geq \left(m(\mf{H})-\frac{\alpha\eps}{2}\right)n^r.$$ Using Lemma~\ref{lem:distancevsweight} and the fact that $\mf{F}$ is  $(\mf{H},\alpha)$-weight stable, we have
\begin{align*}\label{eq:localplusweight2} 
\frac{|\mc{F}|}{n^r}&=
\lambda(\mc{F},\xi_{\mc{F}})\leq m(\mf{H})-\alpha d^w_{\mf{H}}(\mc{F},\xi) \notag\\ &\leq \left(\frac{m(\mf{H},n)}{n^r}+\frac{\alpha\eps}{2}\right)-\alpha\left(1-\frac{r^2}{n}\right) \frac{d_{\mf{H}}(\mc{F})}{r!\binom{n}{r}}\notag \\
&\leq \left(\frac{m(\mf{H},n)}{n^r}+\frac{\alpha\eps}{2}\right)-\frac{\alpha d_{\mf{H}}(\mc{F})}{2n^r}\notag \\&=\frac{(m(\mf{H},n) - \alpha/2 d_{\mf{H}}(\mc{F})) + \alpha/2(\eps n^r - d_{\mf{H}}(\mc{F}))}{n^r} \\&\leq \frac{m(\mf{H},n) - \alpha d_{\mf{H}}(\mc{F})/2}{n^r},
\end{align*}
implying (\ref{eq:localplusweight}).
\end{proof}

When both families \(\mf{F}\) and \(\mf{H}\) are clonable, weight local stability implies local stability, as follows.  For an $r$-graph $\mc{F}$, let $\xi_{\mc{F}} \in \mc{M}(\mc{F})$ denote the uniform distribution on $V(\mc{F})$, that is, $\xi_{\mc{F}}(v)=1/\brm{v}(F)$ for every $v \in V(\mc{F})$. We will omit the index and write $\xi$ instead of $\xi_{\mc{F}}$ when $\mc{F}$ is understood from the context. 
Note that $\lambda(\mc{F},\xi)=|\mc{F}|/(\brm{v}(\mc{F}))^r$. In the other direction, let $(\mc{F},\mu)$ be a weighted graph, choose $k$ integer such that $\mu(v)k$ is an integer for every $v \in V(\mc{F})$. Let $\mc{F}'$ be an $r$-graph obtained by cloning  $v \in V(\mc{F})$ to a set of size  $\mu(v)k$. Then, clearly, $\brm{v}(\mc{F}')=k$ and $|\mc{F}'|=\lambda(\mc{F},\mu)k^r$.  This second observation routinely implies the following lemma.

\begin{lem}\label{lem:weightedbasics} For every weighted $r$-graph $(\mc{F}, \mu)$ there exists a sequence $\{\mc{F}_n\}$ of blowups of $\mc{F}$, such that 
\begin{itemize}
\item $\brm{v}(\mc{F}_n) \to_{n \to \infty} \infty$
\item $\lim_{n \to \infty}\frac{|\mc{F}_n|}{\brm{v}(\mc{F}_n)^r}=\lambda(\mc{F},\mu)$
\item $\lim_{n \to \infty}\frac{d_{\mf{H}}(\mc{F}_n)}{\brm{v}(\mc{F}_n)^r}=d_{\mf{H}}(\mc{F},\mu)$ for every clonable family $\mf{H}$.
\end{itemize}
\end{lem}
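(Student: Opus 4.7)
\emph{Construction.} The plan is to define $\mc{F}_n$ as an integer blowup of $\mc{F}$ using a rational approximation of $\mu$. Fix a sequence $k_n \to \infty$ of positive integers and choose a probability measure $\mu^{(n)}$ on $V(\mc{F})$ with $k_n \mu^{(n)}(v) \in \bb{Z}_{\geq 0}$ for every $v$ and $\|\mu^{(n)} - \mu\|_\infty \leq \brm{v}(\mc{F})/k_n$ (round each $k_n \mu(v)$ down and distribute the deficit among at most $\brm{v}(\mc{F})$ coordinates). Let $\mc{F}_n$ be the blowup of $\mc{F}$ in which each $v$ is cloned to a set of size $k_n \mu^{(n)}(v)$: then $\brm{v}(\mc{F}_n) = k_n$ and $|\mc{F}_n|/k_n^r = \lambda(\mc{F},\mu^{(n)})$ by the observation preceding the lemma. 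The first bullet is immediate, and the second follows from continuity of $\lambda(\mc{F},\cdot)$ (a polynomial in the weights) together with $\mu^{(n)} \to \mu$.

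For the third bullet I would establish matching $\limsup$ and $\liminf$ bounds on $d_{\mf{H}}(\mc{F}_n)/k_n^r$, both equal to $d^w_{\mf{H}}(\mc{F},\mu)$. \emph{Lower bound:} let $\mc{H}_n \in \mf{H}$ of order $k_n$ minimize $|\mc{F}_n \triangle \mc{H}_n|$. Identifying $V(\mc{F}_n) = V(\mc{H}_n)$ and taking $\xi$ uniform gives $d((\mc{F}_n,\xi),(\mc{H}_n,\xi)) \leq |\mc{F}_n \triangle \mc{H}_n|/k_n^r$. Since $(\mc{F}_n,\xi) \in \mf{B}(\mc{F},\mu^{(n)})$ and a direct coupling argument (two different blowup partitions of a common vertex set realized via a near-optimal coupling of $\mu$ and $\mu^{(n)}$) yields $d((\mc{F},\mu^{(n)}),(\mc{F},\mu)) = O(\|\mu^{(n)}-\mu\|_\infty) \to 0$, the triangle inequality produces $d^w_{\mf{H}}(\mc{F},\mu) \leq \liminf_n d_{\mf{H}}(\mc{F}_n)/k_n^r$.

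\emph{Upper bound:} given $\eps > 0$, pick $\mc{H} \in \mf{H}$, $\nu \in \mc{M}(\mc{H})$, and common blowups $(\mc{F}',\pi) \in \mf{B}(\mc{F},\mu)$, $(\mc{H}',\pi) \in \mf{B}(\mc{H},\nu)$ on one vertex set $V$ with $d'(\mc{F}',\mc{H}',\pi) \leq d^w_{\mf{H}}(\mc{F},\mu) + \eps$. With $P_v, Q_h$ the two blowup partitions and $\pi_{v,h} := \pi(P_v \cap Q_h)$ (having marginals $\mu$ and $\nu$), approximate $\pi$ by rationals $\pi^{(n)}_{v,h}$ with denominator $k_n$ preserving both marginals via transportation-polytope rounding. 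The common refinement blowup with $k_n \pi^{(n)}_{v,h}$ vertices of type $(v,h)$ simultaneously exhibits a blowup $\widetilde{\mc{F}}_n$ of $\mc{F}$ and a blowup $\widetilde{\mc{H}}_n \in \mf{H}$ of $\mc{H}$ (the latter by clonability), and $|\widetilde{\mc{F}}_n \triangle \widetilde{\mc{H}}_n|/k_n^r \to d'(\mc{F}',\mc{H}',\pi)$. Since $\widetilde{\mc{F}}_n$ differs from the original $\mc{F}_n$ in at most $O(k_n^{r-1})$ edges (their induced rational marginals on $V(\mc{F})$ agree up to $O(1/k_n)$ per coordinate, adjustable by a minor vertex swap), we conclude $\limsup_n d_{\mf{H}}(\mc{F}_n)/k_n^r \leq d^w_{\mf{H}}(\mc{F},\mu) + \eps$; letting $\eps \to 0$ completes the bound.

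\emph{Main obstacle.} The sole delicate step is producing a joint rational approximation $\pi^{(n)}_{v,h}$ exactly respecting the prescribed rational marginals $\mu^{(n)}, \nu^{(n)}$; this is handled cleanly by transportation-polytope rounding, or more crudely by tolerating an $O(1/k_n)$ marginal discrepancy and absorbing the resulting $O(k_n^{r-1})$-edge perturbation, which vanishes after $k_n^{-r}$ normalization.
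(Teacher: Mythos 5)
Your proof is correct, and it carries out exactly the construction the paper indicates (cloning each vertex $v$ to a set of size $\approx k_n\mu(v)$ and letting $k_n\to\infty$), which the authors declare ``routine'' and do not spell out. Your filling in of the third bullet via the coupling/transportation argument for the lower and upper bounds on $d_{\mf{H}}(\mc{F}_n)/k_n^r$ is the right way to make the routine claim precise, though the upper-bound paragraph is slightly compressed: it would be cleaner to observe directly that the joint rounding $\pi^{(n)}$ produces a blowup $\widetilde{\mc{F}}_n$ which \emph{is} a valid choice of $\mc{F}_n$ (rather than patching it back to the originally fixed $\mc{F}_n$), since both bullets two and three only require some blowup sequence with the right normalized limits.
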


Lemma~\ref{lem:weightedbasics} immediately implies the following.

\begin{corollary}\label{lem:localtoweighted} 
Let $\mf{F}, \mf{H}$ be two clonable families. If $\mf{F}$ is $\mf{H}$-locally stable then $\mf{F}$ is $\mf{H}$-weight locally stable.
\end{corollary}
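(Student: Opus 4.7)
The plan is to derive weight local stability by approximating the weighted graph by a sequence of unweighted blowups, applying the unweighted local stability hypothesis to each blowup, and passing to the limit. Lemma~\ref{lem:weightedbasics} supplies exactly the required bridge: it produces blowups whose normalized edge count converges to $\lambda(\mc{F},\mu)$ and whose normalized distance to $\mf{H}$ converges to $d_{\mf{H}}(\mc{F},\mu)$.

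Concretely, suppose $\mf{F}$ is $(\mf{H}, \eps, \alpha)$-locally stable. I will show $\mf{F}$ is $(\mf{H}, \eps/2, \alpha)$-weight locally stable. Fix $\mc{F} \in \mf{F}$ and $\mu \in \mc{M}(\mc{F})$ with $d_{\mf{H}}(\mc{F}, \mu) \leq \eps/2$, and let $\{\mc{F}_n\}$ be the blowup sequence produced by Lemma~\ref{lem:weightedbasics}, so $\brm{v}(\mc{F}_n) \to \infty$, $|\mc{F}_n|/\brm{v}(\mc{F}_n)^r \to \lambda(\mc{F}, \mu)$, and $d_{\mf{H}}(\mc{F}_n)/\brm{v}(\mc{F}_n)^r \to d_{\mf{H}}(\mc{F}, \mu)$ (the third convergence uses clonability of $\mf{H}$). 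Since $\mf{F}$ is clonable, $\mc{F}_n \in \mf{F}$ for every $n$. For all sufficiently large $n$, the ratio $d_{\mf{H}}(\mc{F}_n)/\brm{v}(\mc{F}_n)^r$ is at most $\eps$ and $\brm{v}(\mc{F}_n) \geq n_0$, so the hypothesis of local stability applies and yields
\[|\mc{F}_n| \;\leq\; m(\mf{H}, \brm{v}(\mc{F}_n)) \;-\; \alpha\, d_{\mf{H}}(\mc{F}_n).\]

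Dividing by $\brm{v}(\mc{F}_n)^r$ and letting $n \to \infty$, I note that $\mf{H}$ is smooth by Lemma~\ref{lem:smooth}, so $m(\mf{H}, \brm{v}(\mc{F}_n))/\brm{v}(\mc{F}_n)^r \to m(\mf{H})$. A brief observation gives $m(\mf{H}) \leq \lambda(\mf{H})$: indeed, for any $\mc{H} \in \mf{H}$ of order $s$, one has $|\mc{H}|/s^r = \lambda(\mc{H}, \xi_{\mc{H}}) \leq \lambda(\mc{H}) \leq \lambda(\mf{H})$, so $m(\mf{H}, s)/s^r \leq \lambda(\mf{H})$. Putting everything together,
\[\lambda(\mc{F}, \mu) \;\leq\; m(\mf{H}) \;-\; \alpha\,d_{\mf{H}}(\mc{F}, \mu) \;\leq\; \lambda(\mf{H}) \;-\; \alpha\, d_{\mf{H}}(\mc{F}, \mu),\]
which is exactly the conclusion of $(\mf{H}, \eps/2, \alpha)$-weight local stability.

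The only real technical point is the mild shrinking $\eps \mapsto \eps/2$, which is needed because the convergence $d_{\mf{H}}(\mc{F}_n)/\brm{v}(\mc{F}_n)^r \to d_{\mf{H}}(\mc{F}, \mu)$ is only asymptotic, so I cannot guarantee $d_{\mf{H}}(\mc{F}_n) \leq \eps \brm{v}(\mc{F}_n)^r$ at the boundary $d_{\mf{H}}(\mc{F}, \mu) = \eps$. The clonability of both families is used essentially: $\mf{F}$'s clonability keeps the blowups $\mc{F}_n$ inside $\mf{F}$ so that local stability applies, while $\mf{H}$'s clonability underlies both Lemma~\ref{lem:smooth} and the third bullet of Lemma~\ref{lem:weightedbasics}.
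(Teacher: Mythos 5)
Your argument is correct and is precisely the one the paper has in mind: the paper offers no written proof, merely asserting that Lemma~\ref{lem:weightedbasics} ``immediately implies'' the corollary, and your writeup simply fills in the intended details (blow up, apply unweighted local stability to $\mc{F}_n$, pass to the limit using smoothness of $\mf{H}$ and the bound $m(\mf{H})\leq\lambda(\mf{H})$, with the harmless shrinking of $\eps$ to dodge the boundary case).
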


\begin{section}{Local Stability From Vertex Local Stability}
\label{sec:genstability}

The main result of this section is the following important tool used in the proof of Theorem~\ref{thm:general}

\begin{theorem}
\label{thm:narrowlocaltolocal}
Let $\mf{F},\mf{H}$ be families of $r$-graphs such that $\mf{H}$ is clonable. If $\mf{F}$ is $\mf{H}$-vertex locally stable, then $\mf{F}$ is $\mf{H}$-locally stable.
\end{theorem}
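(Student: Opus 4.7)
The natural plan is to peel off low-link-size vertices from $\mc{F}$ iteratively until the residual induced subgraph $\mc{F}^*$ on $n^*$ vertices satisfies the hypothesis of vertex local stability, and then reconstruct the desired bound. Fix $(\eps_0, \alpha_0)$ witnessing vertex local stability. Starting from $\mc{F}^{(0)} = \mc{F}$, at each step I remove a vertex whose link in the current graph $\mc{F}^{(i)}$ has size less than $(rm(\mf{H}) - \eps_0) n_i^{r-1}$, where $n_i$ is the current vertex count, terminating at $\mc{F}^*$. Summing the peeled link-sizes and using $\sum_{i=n^*+1}^n i^{r-1} \leq (n^r - (n^*)^r)/r + O(n^{r-1})$ gives
\[
|\mc{F}| - |\mc{F}^*| \leq \left(m(\mf{H}) - \tfrac{\eps_0}{r}\right)(n^r - (n^*)^r) + O(n^{r-1}).
\]

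The main technical obstacle is obtaining a matching lower bound on $m(\mf{H}, n) - m(\mf{H}, n^*)$, since the convergence provided by Lemma~\ref{lem:smooth} alone is too slow. The key observation is that clonability supplies extra information: cloning a maximum-link-size vertex of an extremal $(i-1)$-vertex $\mc{H} \in \mf{H}$ produces an $i$-vertex graph in $\mf{H}$, whence $m(\mf{H}, i) - m(\mf{H}, i-1) \geq r\,m(\mf{H}, i-1)/(i-1) \geq (rm(\mf{H}) - o(1))\, i^{r-1}$ for $i$ large, using Lemma~\ref{lem:smooth}. Summing from $n^* + 1$ to $n$ yields $m(\mf{H}, n) - m(\mf{H}, n^*) \geq (m(\mf{H}) - \eps_0/(2r))(n^r - (n^*)^r) - O(n^{r-1})$, and subtracting the peeling estimate produces a surplus of at least $(\eps_0/(2r))(n^r - (n^*)^r) - O(n^{r-1})$. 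When at least one peeling step has occurred, $n^r - (n^*)^r = \Omega(n^{r-1})$, so this surplus dominates the error for $n$ large; the no-peeling case is covered immediately by vertex local stability applied to $\mc{F}$ itself.

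The argument then splits on the size of $n^*$. Choose a constant $c \in (0,1)$ small enough that $c^r/r! < \eps_0/(3r)$, and set $\eps \leq \eps_0 c^r$. If $n^* < cn$, the bound $|\mc{F}^*| \leq c^r n^r/r!$ is negligible, and the peeling estimate alone pushes $|\mc{F}|$ well below $m(\mf{H}, n)$, comfortably absorbing $\alpha\, d_{\mf{H}}(\mc{F}) \leq \alpha \eps n^r$ for $\alpha$ small. If $n^* \geq cn$, then clonability (which entails closure under vertex deletion, since a deletion is a blowup) gives $d_{\mf{H}}(\mc{F}^*) \leq d_{\mf{H}}(\mc{F}) \leq \eps n^r \leq \eps_0 (n^*)^r$, so vertex local stability yields $|\mc{F}^*| \leq m(\mf{H}, n^*) - \alpha_0 d_{\mf{H}}(\mc{F}^*)$. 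To convert this back to an inequality about $\mc{F}$, I lift an extremal $\mc{H}^* \in \mf{H}$ on $V(\mc{F}^*)$ to $V(\mc{F})$ by cloning a minimum-link-size vertex of $\mc{H}^*$ (whose link, being at most the average, has size at most $r\, m(\mf{H}) n^{r-1}$), which contributes at most $O(n^r - (n^*)^r)$ additional symmetric difference. Thus $d_{\mf{H}}(\mc{F}) \leq d_{\mf{H}}(\mc{F}^*) + O(n^r - (n^*)^r) + O(n^{r-1})$, and choosing $\alpha$ sufficiently small in terms of $\alpha_0, \eps_0, c, r$ lets the surplus dominate $\alpha\, d_{\mf{H}}(\mc{F})$, giving the desired local stability inequality.
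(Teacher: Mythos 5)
Your proof is correct, but it follows a genuinely different route from the paper's. The paper defines a single fixed set $J$ of low-degree vertices in $\mc{F}$ and \emph{bounds its size directly} — this is the purpose of Lemma~\ref{degreelem}, applied to the closest $\mc{H}\in\mf{H}$: since almost every vertex of $\mc{H}$ has link size near $rm(\mf{H})n^{r-1}$, any $v\in J$ lying in the ``good'' part $X$ of $\mc{H}$ contributes $\Omega(\eps''n^{r-1})$ to $|\mc{F}\triangle\mc{H}|$, whence $|J|\leq 2\eps''n$. Having $J$ small, the paper removes it in one shot (so each surviving vertex keeps a large link after deleting a set of only $2\eps''n$ vertices), applies vertex local stability to $\mc{F}'=\mc{F}|_{V(\mc{F})\setminus J}$, and reconstructs via a single application of Lemma~\ref{lem:upperbound} with $n_1=n-|J|$. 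You instead peel iteratively, with \emph{no a priori bound} on the number of peeled vertices, and replace the missing control on $n-n^*$ by the dichotomy $n^*<cn$ versus $n^*\geq cn$: in the first regime the peeling loss together with the trivial bound $|\mc{F}^*|\leq (n^*)^r/r!$ already puts $|\mc{F}|$ a fixed fraction of $n^r$ below $m(\mf{H},n)$, while in the second regime $d_{\mf{H}}(\mc{F}^*)\leq d_{\mf{H}}(\mc{F})\leq \eps_0(n^*)^r$ so vertex local stability applies. This avoids Lemma~\ref{degreelem} entirely (and avoids importing structure of the nearest $\mc{H}$), at the cost of a case split and some care with the telescoping sums. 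One place where your write-up is a bit loose: the surplus bound $(\eps_0/(2r))(n^r-(n^*)^r)-O(n^{r-1})$ with the claim that ``the surplus dominates the error for $n$ large'' hides that the hidden constant in the $O(n^{r-1})$ could exceed $\eps_0/2$, so that when $n-n^*=1$ the displayed expression may actually be negative. What saves the argument is that the $O(n^{r-1})$ Euler--Maclaurin corrections in the peeling and in the clonability lower bound largely cancel, leaving a surplus of the form $(\eps_0-o(1))\sum_{k=n^*}^{n-1}k^{r-1} - rm(\mf{H})(n^{r-1}-(n^*)^{r-1})$, which is $\Omega\bigl((n-n^*)\,\eps_0 c^{r-1}n^{r-1}\bigr)$ for $n^*\geq cn$ and $n$ large. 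Writing the surplus in this pre-approximation form makes the dominance transparent, and then $\alpha\leq\min\{\alpha_0,\ \tfrac14\eps_0 c^{r-1}\}$ suffices. Both approaches use the same key ingredient — the clonability-based lower bound on $m(\mf{H},n)-m(\mf{H},n')$ (Lemma~\ref{lem:upperbound}) — and both give essentially the same dependence of $\alpha',\eps'$ on $\eps_0,\alpha_0$.
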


In the proof of Theorem~\ref{thm:narrowlocaltolocal} we use the following two auxiliary lemmas.

\begin{lem}\label{degreelem}Let $\mf{F}$  be a clonable family of $r$-graphs. Then for every $\eps>0$ there exist $\delta>0$ and $n_0\in\mathbb{N}$ satisfying the following. For every $\mc{F}\in \mf{F}$ with $\brm{v}(\mc{F})=n \geq n_0$ and $|\mc{F}|\geq \left(m(\mf{F}) - \delta\right) n^r $ there exists $X\subseteq V(\mc{F})$ such that $|X|\geq (1-\eps)n$ and
\[\left| | L_{\mc{F}}(v)| - rm(\mf{F}) n^{r-1}\right| \leq \eps n^{r-1}\]
for every $v\in X$.
\end{lem}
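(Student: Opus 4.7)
The plan is to exhibit $X$ as the set of vertices whose degree $|L_{\mc{F}}(v)|$ lies in a narrow window around $rm(\mf{F}) n^{r-1}$, established via two dual arguments. Both rely on two consequences of clonability of $\mf{F}$: (i) smoothness (Lemma~\ref{lem:smooth}) combined with the existence of arbitrarily large blowups realizing the density of any graph in $\mf{F}$, which gives $m(\mf{F}, N) \leq m(\mf{F}) N^r$ for every $N$; and (ii) both cloning a vertex and deleting a vertex of any $\mc{F} \in \mf{F}$ produce graphs still in $\mf{F}$. Write $m := m(\mf{F})$ and $d_v := |L_{\mc{F}}(v)|$; the parameter $\delta$ will be chosen of order $\eps^2$ (with constants depending on $m$ and $r$), and $n_0$ sufficiently large.

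For the upper bound $d_u \leq (rm + \eps) n^{r-1}$ on \emph{every} vertex $u$, I will clone $u$ into a set of size $k = \lceil cn \rceil + 1$ for a small parameter $c$ to be optimized. The resulting graph $\mc{F}'$ has $\brm{v}(\mc{F}') = n + k - 1$ and $|\mc{F}'| = |\mc{F}| + (k-1) d_u$; applying (i) and (ii) yields
\[
d_u \;\leq\; \frac{m(n + k - 1)^r - (m - \delta) n^r}{k - 1}.
\]
Expanding $(1 + c)^r = 1 + rc + \binom{r}{2} c^2 + O(c^3)$ gives $d_u/n^{r-1} \leq rm + m\binom{r}{2} c + O(c^2) + \delta/c + O(1/n)$. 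Choosing $c = \Theta(\sqrt{\delta})$ to balance the two dominant error terms yields $d_u \leq (rm + \eps) n^{r-1}$ whenever $\delta$ and $1/n$ are small enough relative to $\eps$.

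For the lower bound I will order the vertices so that $d_{v_1} \leq \cdots \leq d_{v_n}$ and take $Y = \{v_1, \ldots, v_t\}$ with $t = \lceil \eps n \rceil$. By (ii), $\mc{F}[V(\mc{F}) \setminus Y] \in \mf{F}$, and a union bound gives $|\mc{F}[V(\mc{F}) \setminus Y]| \geq |\mc{F}| - \sum_{v \in Y} d_v \geq |\mc{F}| - t \cdot d_{v_t}$. Applying (i):
\[
d_{v_t} \;\geq\; \frac{(m - \delta) n^r - m(n - t)^r}{t} \;=\; \frac{n^{r-1}\bigl(m[1 - (1-\eps)^r] - \delta\bigr)}{\eps}.
\]
The elementary inequality $(1-\eps)^r + r\eps - 1 \leq \binom{r}{2} \eps^2$ (verified by checking that $f(\eps) := \binom{r}{2}\eps^2 - (1-\eps)^r - r\eps + 1$ satisfies $f(0) = f'(0) = 0$ and $f'' \geq 0$ on $[0,1]$) rearranges this to $d_{v_t}/n^{r-1} \geq rm - m\binom{r}{2} \eps - \delta/\eps$. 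For $\delta \leq \eps^2(1 - m\binom{r}{2})$, which is a positive quantity since $m\binom{r}{2} \leq \binom{r}{2}/r! < 1$, this is at least $rm - \eps$, so at most $t - 1 < \eps n$ vertices fall below the target threshold.

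Combining, the set $X := \{v : (rm - \eps) n^{r-1} \leq d_v \leq (rm + \eps) n^{r-1}\}$ satisfies $|V(\mc{F}) \setminus X| \leq \eps n$, giving $|X| \geq (1 - \eps) n$ as required. The main obstacle is the joint optimization in both bounds: each reduces to minimizing an expression of the form $m\binom{r}{2} x + \delta/x$, forcing the quadratic scaling $\delta = \Theta(\eps^2)$; beyond this, the algebra is routine, and $n_0$ need only be large enough to absorb the $O(1/n)$ lower-order corrections.
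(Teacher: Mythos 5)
Your proposal is correct, and the upper bound is essentially the paper's own argument: clone $u$ into a small set, observe the resulting graph is still in the clonable family $\mf{F}$, and compare its edge count to the maximum. (The paper fixes the cloning parameter as $\eps^4 n$ and gets a tighter conclusion $|L_{\mc{F}}(u)| \leq (rm(\mf{F}) + \eps^2)n^{r-1}$, rather than optimizing $c = \Theta(\sqrt{\delta})$ to land exactly at $rm(\mf{F})+\eps$, but the mechanism is the same.) The lower bound, however, goes a genuinely different way. The paper never deletes vertices: it uses that tighter $\eps^2$ upper bound on \emph{every} vertex together with the handshake identity $r|\mc{F}| = \sum_v |L_{\mc{F}}(v)|$, showing that if more than $\eps n$ vertices had degree below $(rm(\mf{F})-\eps)n^{r-1}$ while all sit below $(rm(\mf{F})+\eps^2)n^{r-1}$, the total edge count would drop below $(m(\mf{F})-\delta)n^r$. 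You instead delete the $\lceil\eps n\rceil$ lowest-degree vertices (valid, since clonable families are closed under deletion) and compare the survivor against the clean bound $m(\mf{F}, N) \leq m(\mf{F}) N^r$ — a consequence of clonability that the paper never states explicitly but which follows immediately from scaling via blowups. Your version is arguably more symmetric, with each direction being a blowup operation plus density comparison, and it avoids needing the stricter $\eps^2$ estimate. The paper's version gets by with only the weaker asymptotic upper bound $m(\mf{F},N) \leq (m(\mf{F})+\delta)N^r$ that smoothness supplies. Both arguments are sound. Two minor technical notes on yours: the error from rounding $\lceil cn\rceil$ is really $O(1/(cn))$ rather than $O(1/n)$, which is harmless since $c = \Theta(\eps)$ is fixed while $n \to \infty$; and you should take $\delta$ strictly below $\eps^2(1 - m(\mf{F})\binom{r}{2})$ to absorb the remaining $O(1/n)$ slack, which your phrasing implicitly allows.
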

\begin{proof}  Clearly, it is enough to prove the lemma for sufficiently small $\eps$. Thus we assume without loss of generality that $\max\{\eps,\eps^2r^2m(\mf{F})\}<1$. We show that $\delta:=(\eps^6 -\eps^8r^2m(\mf{F}))/(1+r+r^2)$ satisfies the lemma for sufficiently large $n_0$. Let $X \subseteq V(\mc{F})$ be the set of all $v \in V(\mc{F})$ satisfying
 \[\left| | L_{\mc{F}}(v)| - rm(\mf{F}) n^{r-1}\right| \leq \eps n^{r-1}.\] To prove that \(|X|\geq (1-\eps)n\), we first show the following claim.
 \begin{claim}  \[| L_{\mc{F}}(v)| \leq (rm(\mf{F}) + \eps^2) n^{r-1}\]
for every $v \in  V(\mc{F})$.
\end{claim}
\begin{proof}
Suppose for a contradiction that $$| L_{\mc{F}}(v)| > (rm(\mf{F}) + \eps^2) n^{r-1}$$ for some $v \in  V(\mc{F})$.
Let $n':=\lceil(1+\eps^4)n\rceil$ and let $\mc{F}'$ be obtained from $\mc{F}$ by cloning $v$ into a set of size $\lceil\eps^4 n\rceil+1$. We have $\mc{F}'\in \mf{F}$, as $\mf{F}$ is clonable. For sufficiently large $n$, we have
\begin{align}m(\mf{F},n') &\leq (m(\mf{F}) + \delta)n'^r \leq  (m(\mf{F}) + \delta)(1+\eps^4 r+ \eps^8r^2) n^r. \label{eq:Fprimeupperbound}
\end{align}
On the other hand,
\begin{align}m(\mf{F}, n') \geq |\mc{F}'| &> |\mc{F}| + \eps^4 n (rm(\mf{F}) + \eps^2 ) n^{r-1}\notag \\
&\geq (m(\mf{F}) - \delta) n^r + \eps^4 (rm(\mf{F}) + \eps^2 ) n^{r}\label{Fprimelowerbound}.
\end{align}
But now  (\ref{eq:Fprimeupperbound}) and (\ref{Fprimelowerbound}) together imply that
$$\eps^6 - \delta  < \delta(1+\eps^4 r + \eps^8r^2)+\eps^8r^2m(\mf{F}),$$
which contradicts to our choice of $\delta$. Thus, the claim holds.
\end{proof}
By the preceding claim we have that
 \[| L_{\mc{F}}(v)| < \left(rm(\mf{F})  - \eps\right) n^{r-1}\]
for all $v \in V(\mc{F}) \setminus X$. Now suppose for a contradicton that $|X|< (1-\eps)n$. Then
 \begin{align*}|\mc{F}| &= \frac{1}{r} \left( \sum_{v \in V(\mc{F}) \setminus X}{| L_{\mc{F}}(v)|}+ \sum_{v\in X}{| L_{\mc{F}}(v)|}\right) \\
 &< \frac{1}{r} \left( (n-|X|)  \left(rm(\mf{F})  - \eps\right) + |X| \left(rm(\mf{F})  + \eps^2\right) \right)n^{r-1}\\
 &=m(\mf{F})n^r + \frac{\eps}{r}\left((1+\eps)|X| -  n\right) n^{r-1} \\&<m(\mf{F})n^r -\frac{\eps^3}{r}n^r\\
 &\leq (m(\mf{F}) -\delta) n^r,
 \end{align*}
 a contradiction.
\end{proof}
\begin{lem}\label{lem:upperbound} Let $\mf{F}$  be a clonable family of $r$-graphs. Then for every $\eps >0$ there exist $n_0\in\mathbb{N}$ such that for all  $n_2 \geq n_1 \geq n_0$, we have 
\[m(\mf{F}, n_2)  \geq m(\mf{F}, n_1) + (n_2-n_1) (rm(\mf{F}) - \eps)n_1^{r-1}\]
\end{lem}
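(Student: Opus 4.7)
The plan is to construct a near-extremal $\mc{F}'\in\mf{F}$ on $n_2$ vertices from a near-extremal $\mc{F}\in\mf{F}$ on $n_1$ vertices by cloning a vertex of high link degree. The required high-degree vertex will come from Lemma~\ref{degreelem}, and cloning stays inside $\mf{F}$ because $\mf{F}$ is assumed clonable.

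First, I would fix $\eps>0$ and apply Lemma~\ref{degreelem} with $\eps$ (or rather with a slightly smaller parameter $\eps/(rm(\mf{F})+1)$-type adjustment, but the crude version will do) to obtain $\delta>0$ and $n_0^{(1)}\in\mathbb{N}$. By Lemma~\ref{lem:smooth}, $\mf{F}$ is smooth, so $m(\mf{F},n)/n^r\to m(\mf{F})$; choose $n_0^{(2)}$ so that $m(\mf{F},n)\geq(m(\mf{F})-\delta)n^r$ for all $n\geq n_0^{(2)}$, and set $n_0:=\max\{n_0^{(1)},n_0^{(2)}\}$. For any $n_1\geq n_0$, take $\mc{F}\in\mf{F}$ with $\brm{v}(\mc{F})=n_1$ and $|\mc{F}|=m(\mf{F},n_1)$. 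This $\mc{F}$ then satisfies the hypothesis of Lemma~\ref{degreelem}, so there exists $X\subseteq V(\mc{F})$ with $|X|\geq(1-\eps)n_1\geq 1$ every vertex of which has link size at least $(rm(\mf{F})-\eps)n_1^{r-1}$. Pick any $v\in X$.

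Next, I would clone $v$ to a set $W$ of size $n_2-n_1+1$, producing $\mc{F}'\in\mf{F}$ (membership follows because $\mf{F}$ is clonable) with $\brm{v}(\mc{F}')=n_2$. By the definition of cloning, for each new vertex $w\in W\setminus\{v\}$ the link $L_{\mc{F}'}(w)$ equals $L_{\mc{F}}(v)$ and is disjoint from $W\setminus\{w\}$, so the new edges form $n_2-n_1$ disjoint copies of $L_{\mc{F}}(v)$, giving
\[
|\mc{F}'|\;=\;|\mc{F}|+(n_2-n_1)\,|L_{\mc{F}}(v)|\;\geq\;m(\mf{F},n_1)+(n_2-n_1)(rm(\mf{F})-\eps)n_1^{r-1}.
\]
Since $\mc{F}'\in\mf{F}$ has $n_2$ vertices, $m(\mf{F},n_2)\geq|\mc{F}'|$, which is the required inequality.

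There is no serious obstacle; the only point to be careful about is threading the parameters of Lemma~\ref{degreelem} and the smoothness of $\mf{F}$ through a single $n_0$, which is why the initial choice of $\delta$ must be matched to $\eps$ before passing to the smoothness step. One might worry that $(n_2-n_1)$ being very large spoils the bound, but the inequality is linear in $n_2-n_1$ with a coefficient depending only on $n_1$, which matches exactly what the cloning argument produces, so no extra work is needed beyond a clean application of the two tools above.
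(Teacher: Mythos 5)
Your proof is correct and follows essentially the same strategy as the paper's: take an extremal $\mc{F}_1$ on $n_1$ vertices, locate a vertex $v$ of link size at least $(r m(\mf{F})-\eps)n_1^{r-1}$, clone it to a set of size $n_2-n_1+1$, and read off the count. The only difference is how you certify the existence of the high-degree vertex: you route through Lemma~\ref{degreelem}, whereas the paper uses a one-line averaging argument, namely that $\sum_v |L_{\mc{F}_1}(v)| = r|\mc{F}_1| \geq r(m(\mf{F})-\eps/r)n_1^r$ forces some $v$ with $|L_{\mc{F}_1}(v)| \geq (r m(\mf{F})-\eps)n_1^{r-1}$; the averaging is lighter and avoids the extra parameter bookkeeping you flagged, but your invocation of Lemma~\ref{degreelem} is also valid and gives the same conclusion.
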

\begin{proof}
Consider $\mc{F}_1\in \mf{F}$ with $v(\mc{F}_1)= n_1$ such that $|\mc{F}_1| = m(\mf{F}, n_1).$ For large enough $n_1$ we have
\begin{align*}m(\mf{F},n_1) &\geq \left(m(\mf{F}) - \frac{\eps}{r}\right)n_1^r.
\end{align*}
By averaging, there exists $v \in V(\mc{F}_1)$ such that  \begin{equation}\label{highdegvertex}| L_{\mc{F}_1}(v)|  \geq \left(r m(\mf{F}) - \eps\right) n_1^{r-1}.
\end{equation} Let $\mc{F}_2$ be obtained from $\mc{F}_1$ by cloning $v$ to a set of size $n_2-n_1+1$. As $\mc{F}_2\in \mf{F}$, we have
\begin{align*}m(\mf{F}, n_2) \geq |\mc{F}_2| &\geq |\mc{F}_1| + (n_2-n_1)\left(rm(\mf{F}) - \eps\right) n_1^{r-1} \\
&= m(\mf{F}, n_1)+ (n_2-n_1)\left(rm(\mf{F}) - \eps\right) n_1^{r-1},
\end{align*}
as desired.
\end{proof}
\begin{proof}[Proof of Theorem~\ref{thm:narrowlocaltolocal}:] 
Let $\eps, \alpha$ be such that $\mf{F}$ is $(\mf{F}',\eps,\alpha)$-vertex locally stable.
We choose constants \(\eps', \eps''\) such that $0< \eps' \ll \eps'' \ll \eps$ so that the inequalities throughout the proof are satisfied. Let $\alpha':=\min\{\alpha, 2\eps''r^2(1 - m(\mf{H})) \}$. We will show that $\mf{F}$ is $(\mf{H},\eps',\alpha')$-locally stable.

Consider $\mc{F}\in \mf{F}$ with $V(\mc{F})=[n]$ and $d_{\mf{H}}(\mc{F}) \leq \eps' n^r$. We assume that 
\[|\mc{F}| \geq m(\mf{H}, n)- \eps' n^r, \]
since otherwise the result follows, as $\alpha'<1$.  Let $\mc{H}\in \mf{H}$ be such that 
$|\mc{F}\triangle \mc{H}| = d_{\mf{H}}(\mc{F})$. For large enough $n$, we have $|\mc{H}| \geq (m(\mf{H}) - \eps')n^r$.
By Lemma~\ref{degreelem} applied to $\mc{H}$ with $\eps=\eps''$, there exists $X\subseteq [n]$ with $|X| \geq (1-\eps'')n$ such that for each $v\in X$,
\begin{equation}
\label{degreecond}
\left|| L_{\mc{H}}(v)| - rm(\mf{H}) n^{r-1}\right| \leq \eps '' n^{r-1}.
\end{equation}
Consider the set \[J = \{v\in V(\mc{F}) : | L_{\mc{F}}(v)| < (rm(\mf{H}) - (2r^2+1)\eps'')n^{r-1}\}.\]
We will show that \(J\) has relatively small size. From the definition of $J$ and \(X\), it follows that for each \(v\in J\cap X\), we have
\(| L_{\mc{F}}(v)\triangle L_{\mc{H}}(v)|\geq \eps'' n^{r-1}.\)
Thus,
\[|J\cap X|\eps'' n^{r-1} \leq \sum_{v\in V(\mc{F})}{| L_{\mc{F}}(v) \triangle  L_{\mc{H}}(v)|} = r|\mc{F}\triangle \mc{H}| \leq \eps' r n^r,\]
and therefore, \(|J|\leq | J \cap X| + |J\setminus X| \leq (\frac{\eps'r}{\eps''}  + \eps'')n \leq 2\eps''n.\)
Let $\mc{F}':=\mc{F}|_{V(\mc{F})\setminus J}$, $\mc{H}':=\mc{H}|_{V(\mc{F})\setminus J}$ and $n':=n-|J|$. We have
\begin{equation}
\label{distanceOfF1}d_{\mf{H}}(\mc{F}')\leq |\mc{F}' \triangle \mc{H}'| \leq |\mc{F} \triangle \mc{H}| \leq \eps' n^r \leq \eps n'^r.
\end{equation}
Also, for every $v\in V(\mc{F})\setminus J$, we have
\begin{align}
| L_{\mc{F}'}(v)|\geq | L_{\mc{F}}(v)| - |J| n^{r-2}  &\geq \left(rm(\mf{H}) - 2r\eps''-2\eps''\right)n^{r-1} \notag \\
&\geq(rm(\mf{H}) - \eps)n'^{r-1} \label{degreeF2}.
\end{align}
Since  $\mf{F}$ is $(\mf{H}, \eps, \alpha)$-vertex locally stable, (\ref{distanceOfF1}) and (\ref{degreeF2}) imply that
\begin{equation}
\label{boundonF2}|\mc{F}'|\leq m(\mf{H}, n') - \alpha d_{\mf{H}}(\mc{F}').
\end{equation}
Let $\mc{H}''\in \mf{H}$ be such that $|\mc{H}''\triangle\mc{F}'| = d_{\mf{H}}(\mc{F}')$. Let $\mc{H}_0$ be obtained from $\mc{H}''$ by blowing up a vertex in $V(\mc{F})\setminus J$ to a set of size $n -n'+1$. We have
\begin{align}
|\mc{F}\triangle \mc{H}_0| 
&\leq  |\mc{F}'\triangle \mc{H}''|  + |J|n^{r-1}\label{upperboundonFB}.
\end{align}
By Lemma~\ref{lem:upperbound}, for sufficiently large $n$, we have 
\begin{align}m(\mf{H}, n) &\geq m(\mf{H}, n') + (n-n')\left(rm(\mf{H}) -\frac{\eps''}{1-2r\eps''}\right)n'^{r-1}\notag \\
&\geq m(\mf{H}, n') + |J|\left(rm(\mf{H}) - \frac{\eps''}{1-2r\eps''}\right)(1-2r\eps'')n^{r-1}. \label{maxFprimeOnN:maxFprimeOnN2}
\end{align}
Now we are ready to put all the obtained inequalities together to show that $\mf{F}$ is $(\mf{H}, \eps', \alpha')-$ locally stable.
\begin{align*}
|\mc{F}|&\leq |\mc{F}'| + |J|(rm(\mf{H}) - (2r^2+1)\eps'')n^{r-1} \\
&\stackrel{(\ref{boundonF2})}{\leq}\ m(\mf{H}, n') - \alpha d_{\mf{H}}(\mc{F}') + |J|(rm(\mf{H}) - (2r^2+1)\eps'')n^{r-1} \\
&\stackrel{(\ref{maxFprimeOnN:maxFprimeOnN2})}{\leq}
 m(\mf{H}, n) - |J| \left(rm(\mf{H}) -\frac{\eps''}{1-2r\eps''}\right)(1-2r\eps'')n^{r-1} \\&\qquad \qquad \;\;\;- \alpha |\mc{F}'\triangle\mc{H}''| + |J|(rm(\mf{H}) - (2r^2+1)\eps'')n^{r-1} \\
&= m(\mf{H}, n)  - \alpha |\mc{F}'\triangle \mc{H}''| - 2\eps''r^2(1- m(\mf{H}))|J|n^{r-1}  \\
&\leq m(\mf{H}, n) - \alpha'|\mathcal{F}'\triangle \mc{H}''| - \alpha' |J| n^{r-1} \\
&\stackrel{(\ref{upperboundonFB})}{\leq} m(\mf{H}, n) - \alpha'|\mathcal{F}\triangle \mc{H}_0| \\
&\leq m(\mf{H}, n)  - \alpha' d_{\mf{H}}(\mc{F}),
\end{align*}
as desired.
\end{proof}
\end{section}

\section{Weak stability from lagrangians}\label{sec:lagrangian}
In this section we prove that, under certain restrictions, every sufficiently dense graph in a family is close to some graph maximizing the lagrangian in that family. The arguments we use in this and the next section are continuous in nature.

We say that an $r$-graph $\mc{F}$ is \emph{thin} if for every  $(r-1)$-tuple $I\subseteq V(\mc{F})$, there exists at most one edge containing \(I\). In other words, $\mc{F}$ is thin if and only if it is $\mc{D}_r$-free, where $\mc{D}_r$ is an $r$-graph with two edges $D_1$ and $D_2$ such that $|D_1 \cap D_2|=r-1$. Note that every $(m,r,r-1)$ Steiner system is thin. We say that the family  $\mf{F}$ is \emph{thin} if every $\mc{F} \in \mf{F}$ is thin. In the applications of the next result the family \(\mf{F}^*\) will consist of the \(r\)-graphs which cover pairs. In particular, we do not assume that \(\mf{F}^*\) is clonable.

 \begin{theorem}\label{thm:compactness} If the family $\mf{F}^*$ is thin and the family $$\mf{F}^{**}=\{\mc{F}^*|_{\pl{supp}(\mu)} \: | \: \mc{F}^* \in \mf{F}^{*} , \: \lambda(\mc{F}^*,\mu)= \lambda(\mf{F}^{*})\; {for\; some} \; \mu \in \mc{M}(\mc{F}^*)\}.$$ \
is not empty, then $\mf{F}^*$ is $\mf{F}^{**}$-weakly weight stable.
\end{theorem}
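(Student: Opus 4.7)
The plan is a contradiction-by-compactness argument. Suppose that $\mf{F}^*$ is not $\mf{F}^{**}$-weakly weight stable: there exist $\eps > 0$ and sequences $\mc{F}_n \in \mf{F}^*$, $\mu_n \in \mc{M}(\mc{F}_n)$ with $\lambda(\mc{F}_n, \mu_n) \to \lambda(\mf{F}^*)$ yet $d_{\mf{F}^{**}}(\mc{F}_n, \mu_n) > \eps$ for every $n$. Since restricting each $(\mc{F}_n, \mu_n)$ to $\pl{supp}(\mu_n)$ preserves the Lagrangian, the distance $d_{\mf{F}^{**}}$, and thinness (thinness being hereditary), I may assume $\mu_n > 0$ throughout $V(\mc{F}_n)$.

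The central step is to truncate each $\mc{F}_n$ to bounded vertex set using thinness. Writing $r\lambda(\mc{F},\mu) = \sum_v \mu(v)\sigma_v$ with $\sigma_v := \sum_{F\ni v}\prod_{u\in F\setminus\{v\}}\mu(u)$, thinness forces each $\sigma_v \le e_{r-1}(\mu)$ (the elementary symmetric polynomial), and Maclaurin's inequality gives $e_{r-1}(\mu) \le 1/(r-1)!$. Hence the Lagrangian contribution of any vertex set $T$ is bounded linearly by $\mu(T)/(r-1)!$. Setting a threshold $\eta_n$ and taking $S_n := \{v : \mu_n(v) \ge \eta_n\}$ yields $|S_n| \le 1/\eta_n$, while the tail $T_n := V(\mc{F}_n)\setminus S_n$ contributes only $\mu_n(T_n)/(r-1)!$ to $\lambda$. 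A careful choice of $\eta_n$---slow enough that $\mu_n(T_n) \to 0$, fast enough that $|S_n|$ stays uniformly bounded---yields truncated weighted graphs $(\mc{F}_n|_{S_n}, \hat\mu_n)$, with $\hat\mu_n$ the renormalization of $\mu_n|_{S_n}$, for which $\lambda(\mc{F}_n|_{S_n}, \hat\mu_n) \to \lambda(\mf{F}^*)$ and $d((\mc{F}_n, \mu_n), (\mc{F}_n|_{S_n}, \hat\mu_n)) \to 0$.

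Pass then to a subsequence along which all truncations $\mc{F}_n|_{S_n}$ are isomorphic to a fixed thin $r$-graph $\mc{G}\in\mf{F}^*$, and $\hat\mu_n \to \mu^*$ in the compact simplex $\mc{M}(\mc{G})$. Continuity of the Lagrangian polynomial in $\mu$ yields $\lambda(\mc{G}, \mu^*) = \lambda(\mf{F}^*)$, so $\mu^*$ is a Lagrangian-maximizing measure on $\mc{G}$ and $\mc{G}|_{\pl{supp}(\mu^*)}$ lies in $\mf{F}^{**}$ by definition of that family. Continuity of the weighted distance $d$ in $\mu$ on a fixed vertex set then gives $d_{\mf{F}^{**}}(\mc{F}_n, \mu_n) \to 0$, contradicting the assumption $d_{\mf{F}^{**}}(\mc{F}_n, \mu_n) > \eps$.

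I expect the main obstacle to be the truncation trade-off: the Maclaurin bound $\sigma_v \le 1/(r-1)!$ controls the Lagrangian loss only by the total tail mass $\mu_n(T_n)$, not by the tail cardinality, so it is not immediate that one can simultaneously keep $|S_n|$ bounded and $\mu_n(T_n) \to 0$ for every bad sequence. A cleaner alternative would be to avoid a sharp truncation altogether and instead carry out the compactness directly on the space of thin weighted graphs---analogous to a hypergraph limit setting---where the same Maclaurin bound supplies the ``tightness'' needed for sequential compactness. A secondary point to check is that the subsequential limit $(\mc{G}, \mu^*)$ belongs to $\mf{F}^*$, which is automatic since $\mf{F}^*$ is thin and thinness passes to subgraphs, but it relies on identifying $\mc{G}$ correctly as a limit of truncations inside $\mf{F}^*$.
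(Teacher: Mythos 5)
Your primary (truncation) approach has exactly the gap you flag, and it is not a minor technical wrinkle but the crux of the matter. The Maclaurin-type bound $\sigma_v \leq e_{r-1}(\mu) \leq 1/(r-1)!$ controls the Lagrangian loss by the \emph{mass} of the tail, never by its cardinality, so choosing $\eta_n$ so that $|S_n|$ is uniformly bounded \emph{and} $\mu_n(T_n) \to 0$ requires precisely the kind of equi-tightness for near-maximizing sequences that the theorem is, in effect, asserting. There is no a priori reason a near-maximizer cannot spread a fixed fraction of its mass over an unbounded number of vertices, and ruling this out is the heart of the proof, not a preliminary reduction.

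Your ``cleaner alternative'' --- work directly on a compactified space of thin weighted graphs --- is essentially the paper's route. The paper embeds everything into $\mf{X} := \mf{F}_{\bb{N}} \times \mc{M}(\bb{N})$, where $\mf{F}_{\bb{N}}$ carries an ultrametric governed by agreement on initial segments and $\mc{M}(\bb{N})$ consists of non-increasing subprobability weights with the $L^1$ norm, so that $\mf{X}$ is compact. Thinness is used exactly as in your Maclaurin estimate, but with the stronger, ordered-weights form
$\lambda(\mc{F},\mu) - \lambda(\mc{F}|_{[N]},\mu) \leq \mu(N+1)/(r-1)! \leq 1/(N(r-1)!)$,
which gives \emph{uniform} continuity of $\lambda$ on $\mf{X}$ (the paper's Claim~\ref{lambdacontonX}). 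The compactness plus continuity step then reduces the problem to the topological statement that every point with $\lambda \geq \lambda(\mf{F}^*) - \delta$ lies in a small neighborhood of a maximizer.

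What your sketch is missing --- and what distinguishes the paper's argument from a pure compactness appeal --- is the identification of the maximizers in $\mf{X}$. You cannot deduce from the Maclaurin bound alone that the maximum of $\lambda$ over $\mf{X}$ is attained only at points of finite support (and hence corresponding to $\mf{F}^{**}$). The paper's Claim~\ref{claim:weak2} proves this separately via a KKT/first-order argument: at a maximizer $(\mc{F},\mu)$ with $\brm{supp}(\mu) = \bb{N}$, the partial derivative $\partial \lambda / \partial \nu(i)|_{\nu=\mu}$ must equal $r\lambda(\mf{F}^*) > 0$ for every $i$, yet for $i$ large (choosing $i$ so that every edge through $i$ meets $[N]$ in at most $r-2$ vertices) the derivative is at most $\mu(N+1)/(r-2)! = O(1/N)$, a contradiction. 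Without this step, your subsequential limit argument only tells you the limit is \emph{some} point of $\mf{X}$ with $\lambda = \lambda(\mf{F}^*)$, not that it has finite support and restricts to a member of $\mf{F}^{**}$. The remaining point you raise --- whether the finite limit lies in $\mf{F}^*$ --- is handled in the paper by working in the closure $\mf{F}_{\bb{N}}$ (finite restrictions of elements of $\mf{F}_{\bb{N}}$ are, by construction, subgraphs of graphs in $\mf{F}^*$ up to isolated vertices); the fact that the support-restriction lands in $\mf{F}^{**}$ then requires a short extension-by-zero argument, not merely heredity of thinness.
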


\begin{proof} We will consider infinite $r$-graphs in the proof of this theorem.
Let $\mf{F}_{\bb{N}}$ denote the family of $r$-graphs such that $V(\mc{F})=\bb{N}$ for every $\mc{F} \in \mf{F}_{\bb{N}}$ and every finite subgraph $\mc{H}$ of a graph in $\mf{F}_{\bb{N}}$ is obtained from a subgraph of a graph in $\mf{F}^*$ by adding isolated vertices. Clearly, $\mf{F}_{\bb{N}}$ is thin.  We enhance $\mf{F}_{\bb{N}}$ with a metric $\varsigma$ defined as follows. For $\mc{F},\mc{F}' \in \mf{F}_{\bb{N}}$, let $\varsigma(\mc{F},\mc{F}'):=1/2^k$, where $k$ is the minimum integer such that $\mc{F}|_{[k]} \neq \mc{F}'|_{[k]}$. Note that $(\mf{F}_{\bb{N}},\varsigma)$ is compact.

Let $$\mc{M}(\bb{N}):=\{\mu: \bb{N} \to \bb{R}_+ \: | \: \mu(1) \geq  \mu(2) \geq \mu(3) \geq \ldots, \; \sum_{i=1}^{\infty}\mu(i) \leq 1\}.$$ 
It is not hard to verify that $\mc{M}(\bb{N})$ is compact with $L^1$ norm $\|\cdot\|_1$. Let $\mf{X}$ be the product of
$(\mf{F}_{\bb{N}},\varsigma)$ and $(\mc{M}(\bb{N}),\|\cdot\|_1)$. 

Note that every pair  $(\mc{F}, \mu)$ with $\mc{F} \in \mf{F}^*, \mu \in \mc{M}(\mc{F})$ naturally corresponds to an 
element of $\mf{X}$, as we can assume that $V(\mc{F})=[v(\mc{F})]$ and $\mu(i) \geq \mu(j)$ for all $i \leq j$, $i,j \in V(\mc{F})$.   

For $(\mc{F},\mu) \in \mf{X}$, define $\lambda(\mc{F},\mu):=\sum_{F \in \mc{F}} \mu(F).$ 
\begin{claim}
\label{lambdacontonX}
$\lambda$ is continuous on $\mf{X}$. 
\end{claim}
\begin{proof}
It is easy to see that $$|\lambda(\mc{F},\mu) - \lambda(\mc{F},\mu')| \leq \|\mu-\mu'\|_{1}$$
for every $\mc{F} \in \mf{F}_{\bb{N}}$ and all $\mu,\mu' \in \mc{M}(\bb{N})$. Thus, it suffices to show that 
for all $\mc{F},\mc{F}' \in\mf{F}_{\bb{N}}$ and every $\eps >0$ there exists $N\in\mathbb{N}$ such that if $\mc{F}'|_{[N]}=\mc{F}|_{[N]}$ then
$|\lambda(\mc{F},\mu) - \lambda(\mc{F}',\mu)| \leq \eps$ for every
$\mu \in \mc{M}(\bb{N})$. 
We show that  $N:=\lceil \frac{1}{\eps(r-1)!} \rceil$ satisfies the above. Let  $\mc{H}:=\mc{F}'|_{[N]}=\mc{F}|_{[N]}$. 
It suffices to show that $\lambda(\mc{F},\mu) \leq \lambda(\mc{H},\mu)+\eps$. We have 
\begin{align*}
 \lambda(\mc{F},\mu) - \lambda&(\mc{H},\mu) = \sum_{F \in \mc{F}, F \not \subseteq [N]} \prod_{i \in F}\mu(i) \\ 
&\leq \mu(N+1) \sum_{I \subseteq \bb{N}^{(r-1)}} \prod_{i \in I}\mu(i) \\
&\leq \mu(N+1)\frac{1}{(r-1)!}\left(\sum_{i \in N}\mu (i)\right)^{r-1} \leq \frac{1}{N(r-1)!} \leq \eps,
\end{align*}
as desired. Note that in the second inequality above we use the fact that \(\mc{F}\) is thin.
\end{proof}
It follows from the above claim that
 \begin{equation}
\label{eq:lambdamax}
\lambda(\mf{F}^{*})=\max\limits_{(\mc{F},\mu) \in \mf{X}}\lambda(\mc{F},\mu),
\end{equation}
as  every \((\mc{F},\mu)\in \mf{X}\) is a limit of a sequence of weighted graphs in $\mf{F}^*$. Let $$\mf{X}^{**}=\{ (\mc{F},\mu) \in \mf{X} \: | \: \mc{F}|_{\brm{supp}(\mu)}\in \mf{F}^{**}\},
$$
That is, $\mf{X}^{**}$ is a set of weighted graphs in $\mf{X}$ with finite support, coinciding with some graph in $\mf{F}^{**}$ on its support.

\begin{claim}\label{claim:weak2} If $\lambda(\mc{F},\mu)=\lambda(\mf{F}^{*})$ for some \((\mc{F},\mu)\in \mf{X}\), then $(\mc{F},\mu) \in \mf{X}^{**}$. 
\end{claim}
\begin{proof} Suppose for a contradiction that there exists some $(\mc{F},\mu) \in \mf{X}\setminus\mf{X}^{**}$ such that $\lambda(\mc{F},\mu)=\lambda(\mf{F}^{*})$. By definition of  \(\mf{F}^{**}\), it follows that $\brm{supp}(\mu)$ must be infinite, and hence, $\brm{supp}(\mu) = \mathbb{N}$, since $\mu$ is non-decreasing. As $\lambda(\mc{F}, \nu)$ considered as a function of $\nu$ is maximized at $\nu=\mu$ we have \[\frac{ \partial{\lambda(\mc{F}, \nu)}}{ \partial \nu(i)}\Big|_{\nu=\mu} = r\lambda(\mf{F}^{*}),\]
for every \(i\in \mathbb{N}\). Thus, we have
\begin{equation}\label{eq:lagrangianderivative}
\sum_{\substack{J \in \bb{N}^{(r-1)}, |J|=r-1 \\ J \cup \{i\} \in \mc{F}}} \prod_{j \in  J} \mu(j)=r\lambda(\mf{F}^{*}) \end{equation}
for every $i \in \bb{N}$. To show that (\ref{eq:lagrangianderivative}) cannot hold we employ an argument similar to the one used in the proof of the previous claim.
Choose an integer $N$ such that $N> \frac{1}{r(r-2)!\lambda(\mf{F}^{*})}$, and let $i$ be such that $|F \cap [N]| \leq r-2$ for every $F \in \mc{F}$ with $i \in F$. 
Then \begin{align*}
 \sum_{\substack{J \in \bb{N}^{(r-1)}, |J|=r-1 \\ J \cup \{i\} \in \mc{F}}} \prod_{j \in  J} \mu(j) &\leq  \mu(N+1)\sum_{K \in \bb{N}^{(r-2)}, |K|=r-2} \prod_{j \in K}\mu(j)\\ & \leq \frac{1}{N(r-2)!} < r\lambda(\mf{F}^{*}).
 \end{align*} This contradiction finishes the proof of the claim. \end{proof}

Now we are ready to finish the proof. We will show that for every \(\eps>0\) there exists \(\delta>0\) such that for every \(\mc{F}\in \mf{F}^*\)  and \(\mu \in \mc{M}(\mc{F})\), if 
\(\lambda(\mc{F}^*, \mu) \geq \lambda(\mf{F}^{*}) - \delta\), then \(d_{\mf{F}^{**}}(\mc{F}^*, \mu) \leq \eps\). (Clearly $\lambda(\mf{F}^*)$=$\lambda(\mf{F}^{**})$ so the above implies the theorem.)  Abusing  notation slightly we consider pairs $(\mc{F},\mu)$ as above as  elements of $\mf{X}.$ 

From continuity of $\lambda$ and Claim~\ref{claim:weak2} it follows that for every  \(\eps>0\) there exists \(\delta>0\) such that for every $(\mc{F},\mu) \in \mf{X}$ satisfying   \(\lambda(\mc{F}^*, \mu) \geq \lambda(\mf{F}^{*}) - \delta\)
there exists  \((\mc{F}^{**}, \mu^{**})\in \mf{X}^{**}\) such that  \(\mc{F}|_{[n]} = \mc{F}^{**}|_{[n]}\) for all \(n\leq \frac{2}{\eps}(r-1)! + 1\).

Following the argument in Claim~\ref{lambdacontonX},  let \(\mc{H}: =  \mc{F}|_{[N]} (= \mc{F}^{**}|_{[N]}\)), for \(N:= \lceil \frac{2}{\eps}(r-1)!\rceil\). As in Claim~\ref{lambdacontonX} we have
$$\lambda(\mc{F},\mu^*) - \lambda(\mc{H},\mu^*) \leq \frac{1}{N(r-1)!},$$
$$\lambda(\mc{F}^{**},\mu^*) - \lambda(\mc{H},\mu^*) \leq \frac{1}{N(r-1)!}.$$
Finally, we have
\begin{align*}d_{\mf{F}^{**}}(\mc{F}^*, \mu^*) &\leq d((\mc{F}^*, \mu^*), (\mc{H}, \mu^*)) +  d((\mc{H}, \mu^*), (\mc{F}^{**}, \mu^*)) \\
&\leq (\lambda(\mc{F}^*,\mu^*) - \lambda(\mc{H},\mu^*)) +  (\lambda(\mc{F}^{**},\mu^*) - \lambda(\mc{H},\mu^*)) \\
&\leq \frac{2}{N(r-1)!} \leq \eps,
\end{align*} 
as desired.
\end{proof}

\begin{section}{Stability from local stability}\label{sec:symmetrization}

Our next result can be considered as a generalization of the symmetrization argument of Sidorenko~\cite{sidorenko}, which was subsequently modified and employed by Pikhurko~\cite{pikhurko} and Hefetz and Keevash~\cite{HefKee13}. It can serve as a general tool to obtain global stability from local stability for clonable families. However, note that  although our main result, Theorem~\ref{thm:general}, uses this tool, it is not a direct application, since the family in our interests, \(\pl{Forb}(\mc{T}_r)\), is not clonable.

\begin{theorem}\label{thm:symmetrization}
Let $\mf{F},\mf{H}$ be clonable families of  $r$-graphs.  Let $\mf{F^*}$ consist of all $r$-graphs in $\mf{F}$ that cover pairs. If $\mf{F^*}$ is $\mf{H}$-weakly weight stable and $\mf{F}$ is $\mf{H}$-locally  stable then $\mf{F}$ is $\mf{H}$-stable.  
\end{theorem}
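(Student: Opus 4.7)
By Lemma~\ref{lem:localplusweight}, it suffices to prove that $\mf{F}$ is $\mf{H}$-weight stable; by Corollary~\ref{lem:localtoweighted} and the assumed local stability, $\mf{F}$ is also $(\mf{H},\eps_1,\alpha_1)$-weight locally stable for some $\eps_1,\alpha_1>0$. The plan then has two further steps.

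The first is a short case-split upgrading the hypothesized weak weight stability of $\mf{F}^*$ to full weight stability of $\mf{F}^*$. Apply the weak hypothesis with target accuracy $\eps_1$ to obtain $\delta_1>0$; then for any $(\mc{F},\mu)$ with $\mc{F}\in\mf{F}^*$, either $\lambda(\mc{F},\mu)\geq\lambda(\mf{H})-\delta_1$, in which case $d_{\mf{H}}(\mc{F},\mu)\leq\eps_1$ and weight local stability gives the required bound, or $\lambda(\mc{F},\mu)<\lambda(\mf{H})-\delta_1\leq\lambda(\mf{H})-\delta_1 d_{\mf{H}}(\mc{F},\mu)$ since $d_{\mf{H}}\leq 1$. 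Thus $\mf{F}^*$ is $(\mf{H},\alpha_2)$-weight stable with $\alpha_2:=\min(\alpha_1,\delta_1)$.

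The main step transfers this to $\mf{F}$ by a Sidorenko-style symmetrization. For $(\mc{F},\mu)\in\mf{F}\times\mc{M}(\mc{F})$, define a sequence $(\mc{F}_0,\mu_0)=(\mc{F},\mu),(\mc{F}_1,\mu_1),\dots,(\mc{F}_T,\mu_T)=(\mc{F}^*,\mu^*)$ as follows: as long as $\brm{supp}(\mu_i)$ contains two vertices $u,v$ not both contained in a common edge of $\mc{F}_i$, the density is linear in how the total mass at $\{u,v\}$ is split (no quadratic $\mu(u)\mu(v)$ term can arise); shift all mass at one endpoint to the other in the direction that weakly increases $\lambda$, and use clonability of $\mf{F}$ to delete the resulting zero-weight vertex. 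The process terminates after finitely many steps with $\mc{F}^*\in\mf{F}^*$ and $\lambda(\mc{F}^*,\mu^*)\geq\lambda(\mc{F},\mu)$.

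The main obstacle is controlling how $d_{\mf{H}}$ changes along this sequence. The key inequality to establish is a Lipschitz-type bound
\[
d_{\mf{H}}(\mc{F}_i,\mu_i)\leq d_{\mf{H}}(\mc{F}_{i+1},\mu_{i+1})+C\bigl(\lambda(\mc{F}_{i+1},\mu_{i+1})-\lambda(\mc{F}_i,\mu_i)\bigr)
\]
for some constant $C$ independent of $\mc{F}$, which telescopes to $d_{\mf{H}}(\mc{F},\mu)\leq d_{\mf{H}}(\mc{F}^*,\mu^*)+C(\lambda(\mc{F}^*,\mu^*)-\lambda(\mc{F},\mu))$. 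To prove it I would take an $\mf{H}$-approximation of $(\mc{F}_{i+1},\mu_{i+1})$ realizing its distance, then use clonability of $\mf{H}$ to re-insert the just-deleted vertex as a clone of its mass-recipient on the $\mf{H}$-side, producing an $\mf{H}$-approximation of $(\mc{F}_i,\mu_i)$; the extra weighted symmetric difference thereby introduced is bounded by the very same quantity that controls the density gain on the $\mf{F}$-side. Combining this Lipschitz bound with the weight stability of $\mf{F}^*$ and a short algebraic manipulation yields weight stability of $\mf{F}$ with constant $\min(\alpha_2,1/C)$, which via Lemma~\ref{lem:localplusweight} completes the proof.
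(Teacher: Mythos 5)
Your reduction to weight stability via Lemma~\ref{lem:localplusweight} and Corollary~\ref{lem:localtoweighted} matches the paper, and your first step (upgrading the hypothesized weak weight stability of $\mf{F}^*$ to full $(\mf{H},\alpha_2)$-weight stability by case-splitting on whether $\lambda$ is within $\delta_1$ of $\lambda(\mf{H})$, using $d_{\mf{H}}\leq 1$) is correct. The gap is the claimed Lipschitz inequality
\[
d_{\mf{H}}(\mc{F}_i,\mu_i)\leq d_{\mf{H}}(\mc{F}_{i+1},\mu_{i+1})+C\bigl(\lambda(\mc{F}_{i+1},\mu_{i+1})-\lambda(\mc{F}_i,\mu_i)\bigr),
\]
which is false. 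The density gain in one symmetrization step equals $\mu_i(v)\bigl(\lambda(L_{\mc{F}_i}(u),\mu_i)-\lambda(L_{\mc{F}_i}(v),\mu_i)\bigr)$ and can be \emph{zero} even when $L(u)$ and $L(v)$ are very different sets; in that case your inequality collapses to $d_{\mf{H}}(\mc{F}_i,\mu_i)\leq d_{\mf{H}}(\mc{F}_{i+1},\mu_{i+1})$, for which there is no reason. Concretely, take $r=2$, $\mf{H}=\mf{B}(K_2)$, and $(\mc{F}_i,\mu_i)$ the $5$-cycle $12345$ with uniform weights: the nonadjacent pair $\{1,3\}$ has $\lambda(L(1),\mu_i)=\lambda(L(3),\mu_i)=2/5$, so the density gain vanishes in either direction, yet collapsing it turns an odd cycle into a bipartite path, whose weighted distance to $\mf{B}(K_2)$ is smaller. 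The clone re-insertion you propose does produce an $\mf{H}$-approximation of $(\mc{F}_i,\mu_i)$, but its extra symmetric-difference cost is governed by $\mu_i(v)\,\lambda\bigl(L(u)\triangle L(v),\mu_i\bigr)$, not by the density gain $\mu_i(v)\bigl(\lambda(L(u),\mu_i)-\lambda(L(v),\mu_i)\bigr)$; these can differ by an arbitrary factor. Note also that the telescoped conclusion $d_{\mf{H}}(\mc{F},\mu)\leq C'\bigl(\lambda(\mf{H})-\lambda(\mc{F},\mu)\bigr)$ is essentially the weight stability of $\mf{F}$ one is trying to prove, so one cannot expect to reach it by iterating a one-step estimate without new input.

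The paper sidesteps any quantitative control of $d_{\mf{H}}$ under symmetrization. It proves, by induction on $\brm{v}(\mc{F})$, the soft implication ``$\lambda(\mc{F},\mu)\geq\lambda(\mf{H})-\delta$ implies $d_{\mf{H}}(\mc{F},\mu)\leq\eps$,'' and then observes this plus $(\mf{H},\eps,\alpha)$-weight local stability gives $(\mf{H},\delta)$-weight stability. On the line segment $\mu_t$ joining $\mu=\mu_x$ to an endpoint $\mu_0$ supported on one fewer vertex, the density $\lambda(\mc{F},\mu_t)$ is \emph{affine} in $t$ (because $u,v$ lie in no common edge), while $d_{\mf{H}}(\mc{F},\mu_t)$ is merely \emph{continuous}. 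If $d_{\mf{H}}(\mc{F},\mu)>\eps$ but $d_{\mf{H}}(\mc{F},\mu_0)\leq\eps$ by induction, continuity yields some $t$ with $d_{\mf{H}}(\mc{F},\mu_t)=\eps$ exactly; there weight local stability forces $\lambda(\mc{F},\mu_t)\leq\lambda(\mf{H})-\alpha\eps$, contradicting the affine lower bound $\lambda(\mc{F},\mu_t)\geq\lambda(\mf{H})-\delta$ once $\delta<\alpha\eps$. The only property of $d_{\mf{H}}$ needed is continuity along the segment---precisely the piece your argument tries to replace with an unavailable Lipschitz bound.
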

\begin{proof}
By Lemma~\ref{lem:localplusweight}, it suffices to show that $\mf{F}$ is $\mf{H}$-weight stable. By Corollary~\ref{lem:localtoweighted}  the family $\mf{F}$ is $\mf{H}$-weight  locally stable.
Let $\eps, \alpha>0$ be such that $\mf{F^*}$ is $(\mf{H},\alpha)$-weakly weight stable and $\mf{F}$ is $(\mf{H},\eps,\alpha)$-locally weight stable. Define $\delta:=\alpha\eps/2$.
We will prove that for every $\mc{F} \in \mf{F}$ and $\mu \in \mc{M}(\mc{F})$ such that
\begin{equation}
\label{eq:stability0}
 \lambda(\mc{F},\mu) \geq \lambda(\mf{H})-\delta,
\end{equation}
we have 
\begin{equation}
\label{eq:stability1}
d_{\mf{H}}(\mc{F},\mu) \leq \eps.
\end{equation}
Note that this statement implies that $\mf{F}$ is  $(\mf{H},\delta)$-weight stable as  $\mf{F}$ is $(\mf{H},\eps,\alpha)$-locally weight stable and $\delta \leq \alpha$.

The proof is by induction on  $\brm{v}(\mc{F})$. The  base of induction is trivial. For the induction step we assume that
$\mc{F} \not \in \mf{F}^*$, as otherwise (\ref{eq:stability1}) holds. Indeed, if $\mc{F} \in \mf{F}^*$, we have
$$d_{\mf{H}}(\mc{F},\mu) \leq \frac{\lambda(\mf{H})-\lambda(\mc{F},\mu)}{\alpha} \leq \frac{\delta}{\alpha} \leq \eps,$$ as $\mf{F^*}$ is $(\mf{H},\alpha)$-weight stable and $\delta \leq \alpha\eps$.

Thus, \(\mc{F}\in \mf{F}^*\) and there exist $v_1,v_2 \in V(\mc{F})$, such that  $\{v_1,v_2\} \not \subseteq F$ for every $F \in \mc{F}$. We assume that \(\mu(v_1)\neq 0\) and \(\mu(v_2)\neq 0\), since otherwise the conclusion follows from the induction hypothesis. We will consider a family  of probability distributions on $V(\mc{F})$ defined as follows. For $t \in [0,1]$, let $\mu_t \in \mc{M}(\mc{F})$ be defined by
$\mu_t(v) = \mu(v)$ for all $v \in V(\mc{F})\setminus\{v_1,v_2\}$,  $\mu_t(v_1)=t(\mu(v_1)+\mu(v_2))$, and $\mu_t(v_2)=(1-t)(\mu(v_1)+\mu(v_2))$. Note that $\mu=\mu_x$, for $x:=\mu(v_1)/(\mu(v_1)+\mu(v_2)$).  As \(\mu(v_1)\neq 0\) and \(\mu(v_2)\neq 0\), it follows that $x \not \in \{0,1\}$.

Note that $(\mc{F},\mu_0)$ and $(\mc{F},\mu_1)$ can be considered as weighted $r$-graphs on $\brm{v}(\mc{F})-1$ vertices and, therefore, the induction hypothesis is apllicable to them. Moreover,
\begin{equation}\label{eq:stability2}
\lambda(\mc{F},\mu)=x\lambda(\mc{F},\mu_0)+(1-x)\lambda(\mc{F},\mu_1).
\end{equation}
If $\lambda(\mf{F},\mu_i) < \lambda(\mf{H}) - \delta$  for $i=1,2$, then by (\ref{eq:stability2}), $\lambda(\mc{F},\mu) < \lambda(\mf{H})-\delta$, in contradiction with (\ref{eq:stability0}). Thus, without loss of generality, we  assume that $\lambda(\mf{F},\mu_0) \geq \lambda(\mf{H}) - \delta$. By the induction hypothesis we have $d_{\mf{H}}(\mc{F},\mu_0) \leq \eps$. 

Now suppose for a contradiction that $d_{\mf{H}}(\mc{F},\mu) > \eps$. As $d_{\mf{H}}(\mc{F},\mu_t)$ is a continuous function of $t$, there exists  $y \in [0,x]$ such that $d_{\mf{H}}(\mc{F},\mu_y)=\eps$. Since
$\mf{F}$ is $(\mf{H},\eps,\alpha)$-locally weight stable, we have
 \begin{equation}\label{eq:stability3}
\lambda(\mc{F},\mu_y) \leq \lambda(\mf{H}) - \alpha\eps.
\end{equation}
On the other hand, 
\begin{align}\label{eq:stability4}
\lambda&(\mc{F},\mu_y) = \frac{x-y}{x}\lambda(\mc{F},\mu_0)+\frac{y}{x}\lambda(\mc{F},\mu_x)\notag \\ &\geq \frac{x-y}{x}(\lambda(\mf{H}) - \delta) + \frac{y}{x}(\lambda(\mf{H}) - \delta) = \lambda(\mf{H}) - \delta>
 \lambda(\mf{H}) - \alpha\eps,
\end{align}
as $\delta< \alpha\eps$. The contradiction between inequalities (\ref{eq:stability3}) and (\ref{eq:stability4}) concludes the proof.
\end{proof}

\section{Erd\H{o}s-Simonovits Stability Theorem via local and weighted stability.}\label{sec:example}

In this subsection we give a sample application of the techniques we developed thus far. We give a proof of the classical Erd\H{o}s-Simonovits Stability Theorem~\cite{Sim68}, which can be stated in the language of this paper as follows.

\begin{theorem}[Erd\H{o}s-Simonovits Stability Theorem~\cite{Sim68}]
Let $t \geq 2$ be a fixed positive integer, and let $K_t$ denote the complete graph on $t$ vertices. Then $\brm{Forb}(K_t)$ is $\mf{B}(K_{t-1})$-stable.
\end{theorem}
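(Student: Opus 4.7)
I will derive the theorem from Theorem~\ref{thm:symmetrization} applied with $\mf{F} := \brm{Forb}(K_t)$ and $\mf{H} := \mf{B}(K_{t-1})$.  Both families are clonable: any $K_t$ in a blowup descends to a $K_t$ in the original graph via the blowup partition, and a blowup of a blowup of $K_{t-1}$ is again a blowup of $K_{t-1}$.  Two conditions then need to be verified.

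\textbf{Weak weight stability of the pair-covering subfamily.}  This step is essentially free.  A $2$-graph covers pairs if and only if it is a complete graph, so the pair-covering subfamily $\mf{F}^*$ of $\brm{Forb}(K_t)$ consists precisely of the cliques $K_m$ with $1 \leq m \leq t-1$.  Each such $K_m$ is itself a blowup of $K_{t-1}$ (obtained by deleting $t-1-m$ vertices of $K_{t-1}$), so $\mf{F}^* \subseteq \mf{H}$.  Consequently $d_{\mf{H}}(\mc{F},\mu) = 0$ for every weighted graph $(\mc{F},\mu)$ with $\mc{F} \in \mf{F}^*$, and the weakly weight stable condition holds trivially.

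\textbf{Local stability.}  The substantive step is to show that $\brm{Forb}(K_t)$ is $\mf{B}(K_{t-1})$-locally stable.  Given a $K_t$-free graph $G$ on $n$ vertices with $d_{\mf{H}}(G) \leq \eps n^2$ for a sufficiently small $\eps$, I would let $H$ be a complete $(t-1)$-partite graph on $V(G)$ with parts $V_1,\ldots,V_{t-1}$ minimizing $|G\triangle H|$, and set $a := |G\setminus H|$ (intra-part edges of $G$) and $b := |H\setminus G|$ (missing cross-part edges), so that $d_{\mf{H}}(G) = a+b$.  The identity
\[
m(\mf{H},n) - |G| \;=\; \bigl(m(\mf{H},n) - |H|\bigr) + (b-a),
\]
together with the nonnegativity of the imbalance term $m(\mf{H},n) - |H|$, reduces the goal to showing $a \leq c\, b$ for some constant $c = c(t) < 1$.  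To obtain this, for each intra-part edge $uv \in V_i$ and each transversal $(w_j)_{j \neq i}$ with $w_j \in V_j$, the $t$-set $\{u,v\} \cup \{w_j : j \neq i\}$ cannot induce a $K_t$ in $G$, so at least one of its $\binom{t}{2}-1$ cross-pair edges is missing from $G$.  Double-counting these configurations against the missing cross-edges $b$, combined with a bound on each vertex's intra-degree coming from the optimality of the partition $H$ (checking that reassigning a single vertex to another part does not decrease $|G\triangle H|$ forces $d^{\,\mathrm{intra}}_G(u) \lesssim n/(t-1)$ when parts are balanced), yields an inequality of the form $a \leq c\,b$ with $c < 1$, as required.

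\textbf{Main obstacle.}  The real work is the local stability step, specifically obtaining a constant $c$ strictly less than $1$ in the counting inequality and correctly handling unbalanced partitions (where the imbalance term competes with the saving from $b-a$).  A cleaner route, which I would ultimately follow, is to first establish vertex local stability by restricting to graphs with minimum degree at least $\bigl(\tfrac{t-2}{t-1} - \eps\bigr) n$: in that regime the closest $H$ is automatically balanced up to $O(\eps n)$ and a transparent Zykov-style symmetrization argument gives the required linear saving, after which Theorem~\ref{thm:narrowlocaltolocal} promotes vertex local stability to local stability.  Combined with the trivial weakly weight stable step above, Theorem~\ref{thm:symmetrization} then yields the desired $\mf{B}(K_{t-1})$-stability of $\brm{Forb}(K_t)$.
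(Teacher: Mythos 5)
Your high-level plan is exactly the paper's: reduce to vertex local stability for $\brm{Forb}(K_t)$ with respect to $\mf{B}(K_{t-1})$, promote it to local stability via Theorem~\ref{thm:narrowlocaltolocal}, and then conclude via Theorem~\ref{thm:symmetrization}, with the weak weight stability of $\mf{F}^*$ (cliques on at most $t-1$ vertices) holding trivially since $\mf{F}^* \subseteq \mf{B}(K_{t-1})$. You correctly abandon the first ``direct'' route: controlling the imbalance term $m(\mf{H},n) - |H|$ against the saving $b-a$ is genuinely awkward without a minimum-degree hypothesis, and Theorem~\ref{thm:narrowlocaltolocal} exists precisely to let one avoid that fight.

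However, the substantive step — the proof of the vertex-local-stability claim itself — is left as a sketch (``a transparent Zykov-style symmetrization argument gives the required linear saving''), and the hint you give is not what the paper does, nor is it obviously the right shape for the job: Zykov symmetrization is about increasing edge counts while preserving $K_t$-freeness, whereas here one needs to control the edit distance to $\mf{B}(K_{t-1})$, which is not preserved under symmetrization. The paper's argument is a direct structural one and is stronger than you need. It takes the nearest blowup $\mc{H}$ with parts $P_1,\dots,P_{t-1}$, defines for each vertex $v$ the index set $I(v) = \{ i : |N(v)\cap P_i| \geq \gamma n \}$, uses the minimum-degree hypothesis to force $|I(v)| \geq t-2$, and then uses $d_{\mf{H}}(\mc{F}) \leq \eps n^2$ and Tur\'an's theorem on the common neighborhoods to rule out $|I(v)| = t-1$; the partition $P_i' := \{v : i \notin I(v)\}$ then witnesses that $\mc{F}$ is already $(t-1)$-partite. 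This gives vertex local stability with $\alpha = 1$ for free (since $\mc{F}\subseteq \mc{H}_0$ implies $|\mc{F}| + d_{\mf{H}}(\mc{F}) \leq |\mc{H}_0| \leq m(\mf{H},n)$), rather than the ``$a\leq cb$, $c<1$'' accounting you were aiming for. So: right framework and right reductions, but the core lemma is not actually proved, and the method you gesture at is a different (and murkier) one from the neighborhood-classification argument the paper uses.
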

 
\noindent \emph{Proof.} Let $\mf{F}:=\brm{Forb}(K_t)$ and $\mf{H}:=\mf{B}(K_{t-1})$. 

\begin{claim}\label{claim:erdosstoneaux}\(\mf{F}\) is \(\mf{H}\)-vertex locally stable.
\end{claim}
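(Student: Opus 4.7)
For \(r=2\), Tur\'{a}n's theorem gives \(m(\mf{H}) = \tfrac{t-2}{2(t-1)}\), so the vertex locality hypothesis \(|L_{\mc{F}}(v)| \ge (rm(\mf{H}) - \eps) n^{r-1}\) translates into the minimum-degree condition \(\deg_{\mc{F}}(v) \ge \bigl(\tfrac{t-2}{t-1} - \eps\bigr) n\) for every \(v \in V(\mc{F})\). My plan is to show that, for \(\eps\) sufficiently small, this minimum-degree condition alone already forces \(\mc{F}\) to be \((t-1)\)-colorable, after which the desired inequality will follow with \(\alpha=1\) almost by definition; the closeness hypothesis \(d_{\mf{H}}(\mc{F}) \le \eps n^2\) will not be used.

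The key input I will invoke is the generalization of the Andr\'{a}sfai--Erd\H{o}s--S\'{o}s theorem to higher chromatic number: every \(K_t\)-free graph on \(n\) vertices with minimum degree strictly greater than \(\tfrac{3t-7}{3t-4}\,n\) is \((t-1)\)-colorable. A short algebraic check gives
\[
\frac{t-2}{t-1} - \frac{3t-7}{3t-4} \;=\; \frac{1}{(t-1)(3t-4)} \;>\; 0,
\]
so any \(\eps < \frac{1}{(t-1)(3t-4)}\) ensures that our minimum-degree bound strictly exceeds the Andr\'{a}sfai--Erd\H{o}s--S\'{o}s threshold for \(n\) large, whence \(\mc{F}\) admits a proper \((t-1)\)-coloring \(V(\mc{F})=V_1\sqcup\cdots\sqcup V_{t-1}\) (with some parts possibly empty).

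Given such a coloring, I will let \(\mc{H}\) denote the complete \((t-1)\)-partite graph on this partition. Then \(\mc{H} \in \mf{B}(K_{t-1})=\mf{H}\) (obtained from \(K_{t-1}\) by cloning each vertex into the corresponding part, deleting where a part is empty), \(\brm{v}(\mc{H})=n\), and \(\mc{F}\subseteq\mc{H}\) since \(\mc{F}\) has no edges inside any \(V_i\). By the definition of \(m(\mf{H},n)\) we have \(|\mc{H}|\le m(\mf{H},n)\), so
\[
d_{\mf{H}}(\mc{F}) \;\le\; |\mc{F}\triangle\mc{H}| \;=\; |\mc{H}|-|\mc{F}|,
\]
which rearranges to \(|\mc{F}|\le m(\mf{H},n)-d_{\mf{H}}(\mc{F})\), exhibiting \((\mf{H},\eps,1)\)-vertex local stability. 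The principal obstacle here is the Andr\'{a}sfai--Erd\H{o}s--S\'{o}s input itself; everything else is bookkeeping against the definitions, and the minimum-degree hypothesis built into vertex local stability is precisely designed to make such an input applicable.
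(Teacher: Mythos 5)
Your argument is correct, but it deliberately takes the shortcut the paper explicitly declines to take. You invoke the Andr\'asfai--Erd\H{o}s--S\'os theorem (in its generalization to $K_t$-free graphs) as a black box, which immediately yields $(t-1)$-colorability from the minimum-degree hypothesis alone; the remaining bookkeeping against the definitions is then exactly as you describe, and your threshold $\eps < \frac{1}{(t-1)(3t-4)}$ matches the bound quoted in the paper's own Remark~\ref{rem:stability} (following the claim). The paper is perfectly aware of this route --- it states it in the remark --- but \emph{intentionally} gives a different, self-contained proof that \emph{does} use the distance hypothesis $d_{\mf{H}}(\mc{F}) \le \eps n^2$. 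The reason is pedagogical: this section is meant as a warm-up illustrating the technique used later in the much harder Lemma~\ref{theorem2}, where no analogue of Andr\'asfai--Erd\H{o}s--S\'os is available. The paper's proof therefore finds a nearby blowup $\mc{H}$ realizing $d_{\mf{H}}(\mc{F})$, works with its blowup partition $\mc{P}$, defines for each vertex $v$ the index set $I(v)$ of parts into which $v$ has substantial degree, shows via Tur\'an's theorem that $|I(v)| = t-2$ (comparing edge counts inside a common neighborhood in $\mc{F}$ versus $\mc{H}$, and using the distance bound to get a contradiction if $|I(v)| = t-1$), and then re-partitions by $I(v)$ to conclude $(t-1)$-partiteness. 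So: what your approach buys is brevity and elementarity by outsourcing the hard step to a known classical theorem; what the paper's approach buys is a template, contained entirely in the paper, that transfers to the hypergraph setting of Lemma~\ref{theorem2}. Both proofs establish the slightly stronger conclusion that $\mc{F}$ embeds in a $(t-1)$-partite blowup, so both give $\alpha = 1$.
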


Our theorem follows from this claim. Indeed, by Theorem~\ref{thm:narrowlocaltolocal}, Claim~\ref{claim:erdosstoneaux} implies that $\mf{F}$ is $\mf{H}$-locally stable. Theorem~\ref{thm:symmetrization}  in turn implies that \(\mf{F}\) is \(\mf{H}\)-stable, as the family
$\mf{F}^*$ in the statement of Theorem~\ref{thm:symmetrization} is the family of cliques on at most $(t-1)$ vertices, and is, trivially, $\mf{H}$-weakly weight stable.
Thus it remains to prove the claim. 

\begin{proof}[Proof of Claim~\ref{claim:erdosstoneaux}] We will show that $\mf{F}$ is $(\mf{F}',\eps,1)$-vertex locally stable, that is, there exist $\eps>0$, \(n_0\in \mathbb{N}\) such that if  $\mc{F} \in \mf{F}$ satisfies $\brm{v}(\mc{F})=n \geq n_0$, $d_{\mf{H}}(\mc{F}) \leq \eps n^2$ and 
\begin{equation}\label{eq:degree}
| L_{\mc{F}}(v)| \geq \left(\frac{t-2}{t-1} -\eps\right)n,
\end{equation}
for every $v \in V(\mc{F})$, then $|\mc{F}| \leq m(\mf{H},n)-d_{\mf{H}}(\mc{F})$. In fact, we prove a stronger statement. We show that if the above conditions hold then there exists $\mc{H}_0 \in \mf{H}$ such that $\mc{F} \subseteq \mc{H}_0$, that is, $\mc{F}$ is $(t-1)$-partite. 

\begin{remark}An even stronger result was proved by Andr\'{a}sfai, Erd\H{o}s and S\'{o}s~\cite{AndErdSos74}. They show that the condition $d_{\mf{H}}(\mc{F}) \leq \eps n^2$ is unnecessary, and (\ref{eq:degree}) suffices to deduce that $\mc{F}$ is $(t-1)$-partite for $\eps < \frac{1}{(3t-4)(t-1)}$. We, however, include the proof which exploits the bound on the distance from $\mc{F}$ to $\mf{H}$ to demonstrate the methods used in the proof of Theorem~\ref{thm:general}.
\end{remark}

Let $0 \ll \eps \ll \gamma \ll 1/t$ be chosen to satisfy the inequalities appearing further in the proof  and let \(n\) be sufficiently large. Given $\mc{F}$ as above, let $\mc{H} \in \mf{H}$ be such that $V(\mc{H})=V(\mc{F})$ and $|\mc{F} \triangle \mc{H}| = d_{\mf{H}}(\mc{F})$. Since,  $d_{\mf{H}}(\mc{F}) \leq \eps n^2$, we have 
\begin{equation}\label{eq:ES2}
|\mc{H}| \geq |\mc{F}|- \eps n^2 \geq \left(\frac{t-2}{t-1} -3\eps\right)\frac{n^2}{2}.
\end{equation}

Let $\mc{P}=\{P_1,P_2,\ldots,P_{t-1}\}$ be the blowup partition of $V(\mc{H})$. It is easy to see that (\ref{eq:ES2}) implies that $$\left||P_i| - \frac{n}{t-1}\right| \leq \gamma n,$$ for all $i \in [t-1]$ with an appropriate choice of $\eps \ll\gamma$.

Next we show that the neighborhood of every vertex in $\mc{F}$ is ``close" to the neighborhood of some vertex in $\mc{H}$. The corresponding part of the proof of Theorem~\ref{thm:general}, Lemma~\ref{theorem2}, is longer and more technical then the argument below, yet the main ideas are very similar.   

For $v \in V(\mc{F})$, let $I(v)=\{ i \: | \: |N(v) \cap P_i| \geq \gamma n \}$, where $N(v)$ denotes the neighborhood of $v$. Then (\ref{eq:degree}) implies that $|I(v)| \geq t-2$ for every $v \in V(\mc{F})$.
Suppose that $|I(v)|=t-1$, and choose $Q_i \subseteq N(v) \cap P_i$  so that $|Q_i|=\gamma n$ for $i \in [t-1]$. For simplicity, we assume that $\gamma n$ is an integer. Let $Q = \cup_{i\in [t-1]}Q_i \subseteq N(v)$. 
Then $\mc{F}|_{Q}$ is $K_{t-1}$-free and, therefore, Tur\'{a}n's theorem implies that \begin{equation}\label{eq:ES3}
|\mc{F}|_{Q}| \leq \frac{(t-3)((t-1)\gamma n)^2}{2 (t-2)}
\end{equation}
On the other hand, $\mc{H}|_{Q}$ is $K_t$-free, thus, 
\begin{equation}\label{eq:ES4}
|\mc{H}|_{Q}| \leq \frac{(t-2)((t-1)\gamma n)^2}{2 (t-1)}.
\end{equation}
Combining (\ref{eq:ES2}) and (\ref{eq:ES3}), we deduce that
\begin{align*}
|\mc{F} \triangle \mc{H}| &\geq|\mc{F}|_Q \triangle \mc{H}|_Q | \\ &\geq  \left(
\frac{t-2}{t-1} - \frac{t-3}{t-2}\right) \frac{((t-1)\gamma n)^2}{2} > \eps n^2.\end{align*}
This contradiction implies that $|I(v)|=  t-2$ for all $v \in V(\mc{F})$.

Finally, we construct a partition $\mc{P}'=\{P_1',P_2',\ldots,P_{t-1}'\}$ of $V(\mc{F})$ so that $\mc{F} \subseteq \mc{F}''$, where $\mc{F}''$ is a blowup of $K_{t-1}$ with the blowup partition $\mc{P'}$. Define $P_i':= \{v \in V(\mc{F}) \: | \: i \not \in I(v)\}$ for $i \in [t-1]$. 
Note that (\ref{eq:degree}) and the bounds on the size of $P_j$ imply that $$|N(v) \cap P_j| \geq n/(t-1) - (t-1)\gamma n$$ for every $v \in P_i$, $i \neq j$. It follows that, if  $v,v' \in P_i$, then $\{v,v'\} \not \in \mc{F}$. (Otherwise, $\mc{F}|_{N(v) \cap N(v')}$ is $K_{t-2}$-free and $|N(v) \cap N(v') \cap P_j| \geq n/(t-1) - (2t-1)\gamma n$ for every $j \in [t-1] \setminus \{i\}$. This leads to a contradiction using an argument completely analogous to the one used in the preceding paragraph.) Thus, $\mc{F} \subseteq \mc{F}''$, as desired. 
\end{proof}

\end{section}

\begin{section}{Local stability of Forb($\mc{T}_r$)}\label{sec:local}
Recall that an $(m,r,r-1)$ \emph{Steiner system} is an $r$-graph on $m$ vertices such that every $(r-1)$-tuple is contained in a unique $r$-edge. Let $\mc{S}$ be an $(m,r,r-1)$ Steiner system, it is easy to see that $|\mc{S}|  =\frac{{m \choose r-1}}{r}$ and  $| L_{\mc{S}}(v)|=\frac{{m-1 \choose r-2}}{r-1}$ for every $v \in V(\mc{S})$.  We frequently use the following notation for related densities:
\begin{align*}
\pl{e}(m,r)&:=\frac{{m \choose r-1}}{rm^r},\\
\pl{d}(m,r)&:=\frac{{m-1 \choose r-2}}{(r-1)m^{r-1}}.
\end{align*}
We say that an $(m,r,r-1)$ Steiner system $\mc{S}$ is \emph{balanced} if $\lambda(\mc{S})=\lambda(\mc{S},\xi_{\mc{S}})$ (recall that $\xi_{\mc{S}}$ is defined in Section~\ref{sec:weghtedstability}; it is the uniform distribution on $V(\mc{S})$). It is easy to see that $m(\mf{B}(\mc{S}))=\pl{e}(m,r)$ when  $\mc{S}$ is balanced. The main result of this section, stated below, applies to all balanced Steiner systems. 
\begin{theorem}
 \label{thm:localstability} If $\mc{S}$ is a balanced $(m,r,r-1)$ Steiner system for some $m \geq r \geq 3$, then Forb$(\mc{T}_r)$ is $\mathfrak{B}(\mc{S})$-vertex locally stable.
 \end{theorem}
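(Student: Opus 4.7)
The strategy is to choose a closest blowup $\mc{H}\in \mf{B}(\mc{S})$ of $\mc{S}$ on the vertex set $V(\mc{F})$, with blowup partition $\{P_1,\ldots,P_m\}$ and canonical map $s:V(\mc{F})\to V(\mc{S})$ sending $v\in P_i$ to $i$. Writing $\mc{F}^+:=\mc{F}\setminus \mc{H}$ and $\mc{F}^-:=\mc{H}\setminus \mc{F}$, one has $d_{\mf{B}(\mc{S})}(\mc{F})=|\mc{F}^+|+|\mc{F}^-|$ and $|\mc{F}|=|\mc{H}|+|\mc{F}^+|-|\mc{F}^-|\leq m(\mf{B}(\mc{S}),n)+|\mc{F}^+|-|\mc{F}^-|$. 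The whole proof reduces to establishing a linear deficit $|\mc{F}^-|\geq (1+c)|\mc{F}^+|$ for some absolute $c>0$, which yields the conclusion with $\alpha=c/(2+c)$.

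The preliminary structural analysis uses that $\mc{S}$ is balanced together with $|\mc{H}|\geq |\mc{F}|-\eps n^r$ close to $\pl{e}(m,r)n^r$. A continuity argument on the Lagrangian (balanced blowups of balanced Steiner systems are isolated maximizers among blowups of $\mc{S}$) then forces $\bigl||P_i|-n/m\bigr|=o(n)$ for every $i$; combined with the minimum-degree hypothesis this also implies $|L_{\mc{F}}(v)\triangle L_{\mc{H}}(v)|=o(n^{r-1})$ at almost every $v$, so that the $(r-1)$-link at almost every vertex closely matches the blowup of the Steiner link $L_{\mc{S}}(s(v))$.

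The deficit is produced by $\mc{T}_r$-freeness. Fix $F=\{v_1,\ldots,v_r\}\in \mc{F}^+$. Because $F\notin \mc{H}$, one can choose $\ell\in[r]$ so that $F\setminus\{v_\ell\}$ is a transversal whose shadow $I:=\{s(v_j):j\neq \ell\}$ extends in the unique Steiner manner to an edge $I\cup\{s^*\}$ of $\mc{S}$ with $s^*\neq s(v_\ell)$. (If $F$ is a non-edge transversal any $\ell$ works; if $F$ has a collision on some part $P_i$, one chooses $\ell$ so as to break the collision, and $s^*\neq s(v_\ell)=i$ follows automatically from the Steiner structure.) For every $w\in P_{s^*}$ the replacement $F_w:=(F\setminus\{v_\ell\})\cup\{w\}$ lies in $\mc{H}$; and if moreover $F_w\in \mc{F}$, then $F$ and $F_w$ share $r-1$ vertices with $F\triangle F_w=\{v_\ell,w\}$, so $\mc{T}_r$-freeness prohibits any edge of $\mc{F}$ from containing the pair $\{v_\ell,w\}$. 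Since $s(v_\ell)\neq s^*$ and $\mc{S}$ is a Steiner system, $\Theta(n^{r-2})$ edges of $\mc{H}$ do contain this pair, all of which are thereby forced into $\mc{F}^-$; and as $\mc{H}$ is a blowup, the contributions from distinct $w\in P_{s^*}$ are edge-disjoint. Summing over $w$ yields the dichotomy: either $\Omega(n)$ of the replacement edges $F_w$ themselves lie in $\mc{F}^-$, or $\Omega(n^{r-1})$ pair-carrying edges of $\mc{H}$ lie in $\mc{F}^-$.

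The final step is a charging argument converting this pointwise surplus into the global deficit. Each $F\in \mc{F}^+$ distributes unit charge uniformly among the missing edges it forces; the task is to bound how often a given $G\in \mc{F}^-$ is charged. A replacement-type $G=(F\setminus\{v_\ell\})\cup\{w\}$ differs from its forcing $F$ in one coordinate, so the charging multiplicity is bounded by the number of $F\in \mc{F}^+$ obtainable from $G$ by a single vertex swap, which by the link closeness above is $o(n)$. A pair-type $G$ contains only $O(1)$ pairs, each of which produces $\Theta(n^{r-2})$ pair-forced missing edges, diluting the overcount. I expect the principal obstacle to be harmonizing these two regimes under all case distinctions for $F$ (in particular simultaneously non-transversal configurations with multiple collisions) and using the min-degree hypothesis uniformly to exclude pathological vertices whose links deviate significantly from the blowup model; but once this bookkeeping is carried out, one obtains $|\mc{F}^-|\geq (1+c)|\mc{F}^+|$ for some positive $c$ depending only on $m$ and $r$, yielding the theorem.
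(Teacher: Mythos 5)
Your high-level plan matches the paper's: reduce to showing that the number of missing edges exceeds the number of bad edges by a definite factor, and produce the deficit by $\mc{T}_r$-freeness through the replacement-and-pair mechanism. But two gaps in the execution are significant.

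First, you start from the \emph{closest} blowup $\mc{H}$ and then observe that the min-degree hypothesis makes $|L_{\mc{F}}(v)\triangle L_{\mc{H}}(v)|$ small only at ``almost every'' $v$. That is not enough: the later combinatorics need this link-closeness to hold at \emph{every} vertex, and the closest blowup generally does not satisfy it. The paper devotes a substantial technical lemma (Lemma~\ref{theorem2}) to constructing a possibly different blowup $\mc{B}_0$ for which every vertex has $|L_{\mc{F}}(v)\triangle L_{\mc{B}_0}(v)|\leq \eps n^{r-1}$; this involves identifying the exceptional set $J$ of vertices with bad links, proving each $u\in J$ has a unique index $j_u$ whose Steiner link it mimics, and then re-partitioning. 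The transversality statement (Lemma~\ref{transversal}) that your case analysis implicitly relies on is only available after this preparation. Absent this step, your claim that the number of bad edges one vertex-swap away from a given missing edge is $o(n)$ is not justified: the per-vertex link closeness does not directly control the number of bad edges through a fixed $(r-1)$-set, and nothing forbids concentration.

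Second, your charging argument itself is not carried out, as you acknowledge. The paper avoids a global charging altogether. Instead it proves an \emph{amplification} lemma (Claim~\ref{amplificationlem}): for every $I$ and every $I'\subset I$ with $|I'|=|I|-1$, one has $a(I')\geq c\,b(I)\,n$, proved by downward induction on $|I|$ with base case $|I|=r$ using $\mc{T}_r$-freeness in roughly the way you describe. It then bootstraps: assuming $b(\emptyset)>a(\emptyset)/3$, one iteratively selects vertices $v_1,\dots,v_r$ with $b(\{v_1,\dots,v_i\})>a(\{v_1,\dots,v_i\})/3$ at each stage, and the amplification forces $a(\emptyset)$ to exceed a constant times $n^r$, contradicting $|\mc{F}\triangle\mc{B}|\leq\eps n^r$. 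This localized amplification sidesteps exactly the multiplicity bookkeeping your proposal stalls on, and it is where the actual quantitative control comes from.
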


In all the following statements, $m\geq r \geq 3$ are fixed and  $\mc{S}$ is a balanced $(m,r,r-1)$ Steiner system. We denote $\mf{B}(\mc{S})$ simply by $\mf{B}$. The proof of Theorem~\ref{thm:localstability} uses three auxiliary  lemmas. The first ensures that if a large blowup \(\mc{B}\in \mf{B}\) has density close to the maximum possible (i.e. \(\pl{e}(m,r)\)), then the blowup partition is close being an equipartition. More formally, we say that the blowup \(\mc{B}\in \mf{B}\) with the blowup partition \(\mc{P}=\{P_1, P_2, \dots, P_m\}\) is \(\eps\)-\emph{balanced} for some \(0<\eps <1\), if  for each $ j \in [m]$, \[\left||P_j| - \frac{n}{m}\right|\leq \eps n.\]

\begin{lem}
\label{sizelemma} For every $\eps >0$ there exists $\delta >0$ and $n_0\in \mathbb{N}$ such that the following holds. If $\mc{B}\in\mf{B}$ with $\pl{v}(\mc{B})=n\geq n_0$ and $|\mc{B}|\geq \left(\pl{e}(m,r) - \delta \right) n^r$, then \(\mc{B}\) is \(\eps\)-balanced.
\end{lem}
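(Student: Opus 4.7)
The plan is to reduce the lemma to a continuity statement on the probability simplex $\mc{M}(\mc{S})$ via the Lagrangian. Let $\mc{B}\in\mf{B}$ with $\pl{v}(\mc{B})=n$. Since $\mc{S}$ is an $(m,r,r-1)$ Steiner system with $r\geq 3$, it covers pairs, so by Remark~\ref{rem:blowup} the blowup partition $\{P_1,\dots,P_m\}$ of $\mc{B}$ is unique up to reordering. Setting $p_i:=|P_i|/n$, we obtain $p=(p_1,\dots,p_m)\in \mc{M}(\mc{S})$ satisfying $|\mc{B}|/n^r=\lambda(\mc{S},p)$. Since $\mc{S}$ is balanced, $\lambda(\mc{S},\xi_{\mc{S}})=\pl{e}(m,r)$, so the lemma reduces to the following purely analytic statement: for every $\eps>0$ there exists $\delta>0$ such that $\lambda(\mc{S},p)\geq \pl{e}(m,r)-\delta$ forces $\|p-\xi_{\mc{S}}\|_{\infty}\leq \eps$ (and then any $n_0\geq 1$ works).

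The plan is to derive this analytic statement from a compactness argument, which works provided $\xi_{\mc{S}}$ is the \emph{unique} maximizer of $\lambda(\mc{S},\cdot)$ on $\mc{M}(\mc{S})$. Indeed, granted uniqueness, the set $K_{\eps}:=\{q\in \mc{M}(\mc{S}):\|q-\xi_{\mc{S}}\|_{\infty}\geq \eps\}$ is compact, so by continuity of the polynomial $\lambda(\mc{S},\cdot)$, the quantity $M_{\eps}:=\max_{q\in K_{\eps}}\lambda(\mc{S},q)$ is attained and strictly less than $\pl{e}(m,r)$; any $\delta<\pl{e}(m,r)-M_{\eps}$ then witnesses the lemma.

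To prove uniqueness, the starting point is an explicit Taylor expansion of $\lambda(\mc{S},\cdot)$ at $\xi_{\mc{S}}$. A direct calculation using the Steiner system property (each pair of vertices lies in exactly $\lambda_2 = \binom{m-2}{r-3}/(r-2)$ blocks) together with $\sum_i \eta_i = 0$ gives, for $p=\xi_{\mc{S}}+\eta\in\mc{M}(\mc{S})$,
\[
\lambda(\mc{S},\xi_{\mc{S}}+\eta)\;=\;\pl{e}(m,r)\;-\;\frac{\lambda_2}{2m^{r-2}}\sum_i \eta_i^{2}\;+\;O(\|\eta\|_{\infty}^{3}),
\]
so $\xi_{\mc{S}}$ is a strict local maximum. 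The main obstacle is upgrading this to a \emph{global} uniqueness statement. For any hypothetical second maximizer $p^{*}$, the univariate polynomial $g(t):=\pl{e}(m,r)-\lambda(\mc{S},(1-t)\xi_{\mc{S}}+tp^{*})$ of degree at most $r$ satisfies $g\geq 0$ on $[0,1]$, $g(0)=g(1)=0$, $g'(0)=0$ (from the KKT condition at $\xi_{\mc{S}}$), and $g''(0)>0$; one must combine these constraints with the fine structure of $\mc{S}$ (in particular, the fact that the number of blocks containing any $k$-subset is the constant $\binom{m-k}{r-1-k}/(r-k)$ for every $k\leq r-1$) to derive a contradiction, and separately handle support-restricted maximizers on the boundary of $\mc{M}(\mc{S})$, either by induction on $m$ or by explicit verification for the concrete $(11,5,4)$ and $(12,6,5)$ Steiner systems appearing in Theorem~\ref{thm:maintheorem}.
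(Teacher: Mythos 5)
Your first two paragraphs reproduce the paper's argument almost verbatim: pass from $|\mc{B}|/n^r$ to $\lambda(\mc{S},p)$ for the normalized part-size vector $p$, and then extract $\delta$ from compactness of the simplex and continuity of the Lagrangian polynomial. The paper's entire proof is exactly this reduction followed by the sentence ``since $\mc{S}$ is balanced and $\lambda(\mc{S},\cdot)$ is a continuous function\ldots'', so on the main structure you and the paper agree.

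You are also right that for the compactness argument to close one needs more than ``balanced'': one needs $\xi_{\mc{S}}$ to be the \emph{unique} maximizer of $\lambda(\mc{S},\cdot)$ on $\mc{M}(\mc{S})$, whereas the paper's definition of balanced only says $\xi_{\mc{S}}$ is \emph{a} maximizer. The paper does not prove this uniqueness inside the lemma; in its applications ($\mc{S}_5$, $\mc{S}_6$) the uniqueness is part of the imported Frankl--F\"uredi input (Theorem~\ref{thm:FranklFuredi}, the ``uniquely dense'' hypothesis, which is what Theorem~\ref{thm:general} actually assumes). The gap in your write-up is that your from-scratch proof of uniqueness is not completed: the Taylor expansion correctly shows $\xi_{\mc{S}}$ is a strict local maximum, but the promotion to a global statement --- the analysis of the degree-$\le r$ polynomial $g(t)$ on the segment joining $\xi_{\mc{S}}$ to a putative second maximizer, plus the boundary/restricted-support case --- is only a sketch, as you acknowledge. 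Note that for $r\ge 3$ the listed constraints $g(0)=g(1)$, $g'(0)=0$, $g''(0)<0$, $g\le g(0)$ on $[0,1]$, $\deg g\le r$ are not by themselves contradictory (take $g(0)-g(t)=at^2(1-t)^2$ with $a>0$, say), so genuine Steiner-specific structure is needed and not supplied. The appropriate repair here is not to reprove uniqueness from the block parameters, but to observe that in the only cases where this lemma is invoked, uniqueness of the Lagrangian maximizer is already guaranteed by Theorem~\ref{thm:FranklFuredi}.
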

\begin{proof}Let \(\mc{P}=\{P_1,P_2, \dots,P_m\}\) be the blowup partition of  \(\mc{B}\). Define a vector \(\pl{y}\) with $y_j = \frac{|P_j|}{n}$ for each $j \in [m]$. We have $\sum_{i=1}^m{y_j} = 1$ and \begin{equation}\label{lagr}\lambda(\mc{S},\pl{y}) = \frac{|\mc{B}|}{n^r} \geq \pl{e}(m,r)-\delta.
\end{equation}
Since $\mc{S}$ is balanced and $\lambda(\mc{S},\cdot)$ is a continuous function, for every $\eps>0$ there exists $\delta>0$ such that (\ref{lagr}) implies that $|y_j-1/m|\leq \eps,$
as desired.
\end{proof}

Before stating the second auxiliary lemma, we introduce additional definitions. Let $\mc{B} \in \mf{B}$ with the partition \(\mc{P}=\{P_1, P_2, \dots, P_m\}\) and $\mc{F} $  be an \(r\)-graph with $V(\mc{F})=V(\mc{B})$. We call the edges in $\mathcal{F}\setminus \mathcal{B}$ \emph{bad}, the edges in $\mc{B}\setminus \mc{F}$ \emph{missing} and, finally, the edges in $\mathcal{F} \cap \mc{B}$ \emph{good}. 

Given a collection of sets $\mc{X}=\{X_1,X_2,\ldots,X_k\}$ we say that a set $F$ is \emph{$\mc{X}$-transversal} if $|X_i \cap F| \leq 1$ for every $1 \leq i \leq k$.  We say that an $r$-graph $\mf{F}$ is \emph{$\mc{X}$-transversal} if every $F \in \mc{F}$ is $\mc{X}$-transversal.
Informally speaking, the next lemma tells us that if graphs  \(\mc{F}\) and \(\mc{B}\) are ``locally sufficiently close" and \(\mc{F}\) has density close to \(\pl{e}(m,r)\), then  \(\mc{F}\) must be \(\mc{P}\)-transversal. This result will be useful in the proof of Lemma~\ref{theorem2}, where working with bad edges we will be able to restrict our attention to transversal ones.

\begin{lem}\label{transversal}  There exist $\eps>0$ and $n_0\in \mathbb{N}$ such that the following holds. Let $\mc{F}$ be a $\mc{T}_r$-free $r$-graph with $\pl{v}(\mc{F}) = n\geq n_0$ vertices, $\mathcal{B}\in\mf{B}$ with $\pl{v}(\mc{B})=n$ and the blowup partition \(\mc{P}=\{P_1, P_2, \dots, P_m\}\). If $| L_{\mathcal{F}}(v)\triangle  L_{\mathcal{B}}(v)|\leq \eps n^{r-1}$ for every $v\in V(\mc{F})$, and $|\mc{F}|\geq \left(\pl{e}(m,r) - \eps \right) n^r$, then $\mc{F}$ is $\mc{P}$-transversal. Moreover, if $\mc{F}'$ is a $\mc{T}_r$-free $r$-graph such that  \(\mc{F} \subseteq \mc{F}'\), then $\mc{F}'$ is \(\mc{P}\)-transversal.
\end{lem}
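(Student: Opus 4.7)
The plan is to prove the ``moreover'' statement directly, since taking $\mc{F}' = \mc{F}$ recovers the first claim. So let $\mc{F}' \supseteq \mc{F}$ be any $\mc{T}_r$-free $r$-graph, and suppose for contradiction that some $F \in \mc{F}'$ fails to be $\mc{P}$-transversal. Pick two vertices $u, v \in F$ lying in the same part $P_i$ and write $F = \{u, v, x_1, \ldots, x_{r-2}\}$. I will construct three edges in $\mc{F}'$ forming a copy of $\mc{T}_r$, with $F$ playing the role of $D_3$.

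The key observation is that $u$ and $v$ are clones in the blowup $\mc{B}$, so $L_{\mc{B}}(u) = L_{\mc{B}}(v)$. Combining this with the link-closeness hypothesis via the triangle inequality yields
\[
|L_{\mc{F}}(u) \triangle L_{\mc{F}}(v)| \leq |L_{\mc{F}}(u) \triangle L_{\mc{B}}(u)| + |L_{\mc{F}}(v) \triangle L_{\mc{B}}(v)| \leq 2\eps n^{r-1}.
\]
Suppose I find an $(r-1)$-set $J$ with $\{u\} \cup J, \{v\} \cup J \in \mc{F}$ and $J \cap F = \emptyset$. Set $D_1 := \{u\} \cup J$, $D_2 := \{v\} \cup J$, $D_3 := F$. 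Then $|D_1 \cap D_2| = |J| = r-1$, $D_1 \triangle D_2 = \{u, v\} \subseteq D_3$, and $D_3 \cap (D_1 \cap D_2) = F \cap J = \emptyset$ (because $J$ contains neither $u$, $v$, nor any $x_k$). Since $\mc{F} \subseteq \mc{F}'$, all three edges lie in $\mc{F}'$, producing the sought contradiction.

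To guarantee such a $J$ exists, I first show that $\mc{B}$ is nearly balanced. Summing the link-closeness hypothesis over $V(\mc{F})$ gives $|\mc{F} \triangle \mc{B}| \leq \eps n^r / r$, so the density assumption on $\mc{F}$ forces $|\mc{B}| \geq (\pl{e}(m,r) - O(\eps))n^r$. Lemma~\ref{sizelemma} (applied with a suitably chosen parameter) then says $\mc{B}$ is $\eps'$-balanced for some $\eps' = \eps'(\eps)$ tending to $0$ with $\eps$. Since every vertex of $\mc{S}$ has positive degree, a direct computation gives $|L_{\mc{B}}(u)| \geq (1 - \eps')^{r-1}\pl{d}(m,r)\, n^{r-1}$, and therefore $|L_{\mc{F}}(u)| \geq (\pl{d}(m,r) - O(\eps + \eps')) n^{r-1}$.

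Finally, the number of admissible $J$ is bounded below by
\[
|L_{\mc{F}}(u) \cap L_{\mc{F}}(v)| - \sum_{k=1}^{r-2} |\{J \in L_{\mc{F}}(u) : x_k \in J\}| \geq |L_{\mc{F}}(u)| - 2\eps n^{r-1} - (r-2)\binom{n-2}{r-2},
\]
which is $\Omega(n^{r-1}) > 0$ for $\eps$ small and $n$ large enough. The only real obstacle is the quantitative bookkeeping needed to transfer the density assumption from $\mc{F}$ to $\mc{B}$ so that Lemma~\ref{sizelemma} applies; once $\mc{B}$ is known to be nearly balanced, the $\mc{T}_r$-construction is essentially forced by the fact that $u$ and $v$ are clones in $\mc{B}$.
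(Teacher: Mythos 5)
Your proposal is correct and follows essentially the same route as the paper: assume a non-transversal edge $F$ with two vertices $u,v$ in the same part, reduce to finding an $(r-1)$-tuple in $L_{\mc{F}}(u)\cap L_{\mc{F}}(v)$ disjoint from $F$, use the summed link-closeness to transfer density to $\mc{B}$ and invoke Lemma~\ref{sizelemma} for balancedness, and then bound the common link from below. The construction $D_1=\{u\}\cup J$, $D_2=\{v\}\cup J$, $D_3=F$ is exactly the one used in the paper (with $F'$ in place of $J$), and the clone observation $L_{\mc{B}}(u)=L_{\mc{B}}(v)$ giving the $2\eps n^{r-1}$ bound matches the paper's ``bad or missing edge'' argument.
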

\begin{proof} Clearly, it suffices to verify the last conclusion. Note that our choice of $n_0$ here (and in later proofs as well) is not explicit. We assume, for a contradiction, that there exists a non-transversal edge \(F \in \mc{F'}\), with \(v_1,v_2\in F\cap {P_j}\) for some  \(j\). We will show then that  \(\mc{F} \cup \{F\}\) contains a copy of \(\mc{T}_r\). We will find such a copy by showing the existence of an \((r-1)\)-tuple \(F'\in L(v_1)\cap L(v_2)\) that is disjoint from \(F\). Then, clearly \(F, F'\cup \{v_1\}\) and \(F'\cup\{v_2\}\) together will induce a \(\mc{T}_r\).

Let us specify the choice of constants used in the proof. Fix $\eps_{\ref{sizelemma}} :=\frac{1}{m+1}$. Let $\delta_{\ref{sizelemma}}$ be derived from Lemma~\ref{sizelemma} applied with $\eps = \eps_{\ref{sizelemma}}$.  We choose $0<\eps <1$ satisfying the following constraints
\begin{align}
\eps &< \pl{e}(m,r)\\
\eps&\left(1+\frac{1}{r}\right) \leq \delta_{\ref{sizelemma}} \label{epsilontrans:deltasize} \\
\eps &<\frac{1}{2}\pl{d}(m,r)\left(\frac{1}{m}-\eps_{\ref{sizelemma}}\right)^{r-1} \label{epsilontrans:epsilonsize}.
\end{align}

First, note that the links of both \(v_1\) and \(v_2\) have large size.  We have \begin{equation}\label{linksize}|L(v_i)| \geq d(m,r) \min_{i}{|P_i|^{r-1}}- \eps n^{r-1}.
\end{equation}
for $i = 1,2$.  But \(\mc{B}\) is an \(\eps_{\ref{sizelemma}}\)-balanced partition. Indeed, since
\[|\mc{F}\triangle\mc{B}| =\frac{1}{r}\sum_{i\in [n]}{| L_{\mc{F}}(i)\triangle L_{\mc{B}}(i)|} \leq \frac{1}{r}\eps n^r,\]
we have that  
\[|\mc{B}|\geq |\mc{F}| - \frac{1}{r}\eps n^r \geq \left(\pl{e}(m,r) - \eps\left(1+\frac{1}{r}\right)\right)n^r  \stackrel{(\ref{epsilontrans:deltasize})}{\geq} (\pl{e}(m,r)-\delta_{\ref{sizelemma}})n^r.\]
By Lemma~\ref{sizelemma}, applied to $\mc{B}$ with $\eps = \eps_{\ref{sizelemma}}$, we have
\[\left||P_j|-\frac{n}{m} \right|\leq \eps_{\ref{sizelemma}}n.\]
for each \(j\in[m]\). Thus, from (\ref{linksize}) it follows that
\[|L(v_i)| \geq \pl{d}(m,r)\left(\frac{1}{m} - \eps_{\ref{sizelemma}}\right)^{r-1}n^{r-1} - \eps n^{r-1}\]
for each $i\in \{1,2\}$.
Now we can show that the intersection of the links of \(v_1\) and \(v_2\) is large as well. Note that every $(r-1)$-tuple in $L(v_1)\triangle L(v_2)$ is either in a bad or in a missing edge with $v_1$ or $v_2$, but the total number of  such edges is bounded by the initial assumptions, hence 
\[|L(v_1)\triangle L(v_2)| \leq 2\eps n^{r-1}. \]
Thus,
\begin{align}\label{commonlink}|L(v_1)\cap L(v_2)| &= \frac{1}{2}\left(|L(v_1)| + |L(v_2)| - |L(v_1)\triangle L(v_2)| \right)\\
&\geq \pl{d}(m,r)\left(\frac{1}{m} - \eps_{\ref{sizelemma}}\right)^{r-1}n^{r-1} - 2\eps n^{r-1}\\
&>rn^{r-2},
\end{align}
where the last inequality is true for \(n\) sufficiently large. On the other hand, the number of $(r-1)$-tuples that do not contain both $v_1$ and $v_2$ and  have a common vertex with $F$ is bounded by $(r-2)n^{r-2}$. Hence, there exists an $(r-1)$-tuple \(F'\) in $L(v_1)\cap L(v_2)$ that is disjoint from $F$ and, as we discussed at the beginning of the proof, a contradiction follows.
\end{proof}

In the next lemma we show that for every $r$-graph $\mc{F} \in \brm{Forb}(\mc{T}_r)$  with sufficiently large minimum degree there exists a blowup $\mc{B}_0$ of $\mc{S}$ such that every vertex of $\mc{F}$ has ``similar'' neighborhoods in $\mc{F}$ and $\mc{B}_0$. The proof of this lemma contains the bulk of technical difficulties involved  in proving Theorem~\ref{thm:localstability}.

\begin{lem}
\label{theorem2}
For all integers $m\geq r\geq 3$ and $\eps>0$  there exists $\delta >0$ and $n_0\in \mathbb{N}$ such that the following holds. If $\mathcal{F}$ is a $\mc{T}_r $-free $r$-graph with $\pl{v}(\mc{F})= n \geq n_0$, $d_{\mf{B}}(\mathcal{F})\leq \delta n^r $, $|\mc{F}|\geq (\pl{e}(m,r)-\delta)n^r$ and for every $v\in V(\mc{F})$, $| L_{\mc{F}}(v)|\geq (\pl{d}(m,r) - \delta) n^{r-1}$, then there exists 
$\mc{B_0}\in\mf{B}$ with $v(\mc{B_0})=n$ such that for every $v\in V(\mc{F})$
\[| L_{\mathcal{F}}(v)\triangle  L_{\mathcal{\mc{B}_0}}(v)|\leq \eps n^{r-1}.\]
\end{lem}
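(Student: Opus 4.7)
The plan is to build $\mc{B}_0$ from an initial blowup $\mc{B}$ close to $\mc{F}$ (existing by $d_{\mf{B}}(\mc{F})\leq\delta n^r$) by relabeling those vertices whose $\mc{F}$-link poorly matches their assignment in $\mc{B}$.

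First, I would fix $\mc{B}\in\mf{B}$ with blowup partition $\mc{P}=\{P_1,\dots,P_m\}$ and $|\mc{F}\triangle\mc{B}|\leq\delta n^r$, choose constants $\delta\ll \eps_1\ll\eps$, and apply Lemma~\ref{sizelemma} to obtain $||P_i|-n/m|\leq\eps_1 n$ for every $i$. For each $v\in V(\mc{F})$ and $i\in[m]$, define the \emph{ideal link at label $i$}
\[L_i^*(v):=\bigcup_{\{t_1,\dots,t_{r-1}\}\in L_{\mc{S}}(i)}\left\{J\in\binom{V(\mc{F})\setminus\{v\}}{r-1}:J\ \text{is a transversal of}\ (P_{t_1},\dots,P_{t_{r-1}})\right\},\]
i.e., the link $v$ would have in a blowup of $\mc{S}$ based on $\mc{P}$ if $v$ were placed in part $i$. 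Each $|L_i^*(v)|$ equals $\pl{d}(m,r)n^{r-1}$ up to $O(\eps_1 n^{r-1})$, and by the Steiner property (every $(r-1)$-subset of $[m]$ lies in a unique $r$-edge of $\mc{S}$) the ideal links $L_1^*(v),\dots,L_m^*(v)$ are pairwise disjoint, hence at symmetric distance $\geq (2\pl{d}(m,r)-O(\eps_1))n^{r-1}$.

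Next, set $\sigma(v):=\arg\min_i|L_{\mc{F}}(v)\triangle L_i^*(v)|$, $Q_i:=\sigma^{-1}(i)$, and let $\mc{B}_0\in\mf{B}$ be the blowup of $\mc{S}$ with partition $\{Q_1,\dots,Q_m\}$. The target bound will then follow from the triangle inequality once we establish
\[|L_{\mc{F}}(v)\triangle L_{\sigma(v)}^*(v)|\leq\tfrac{\eps}{2}n^{r-1}\quad\text{and}\quad|L_{\sigma(v)}^*(v)\triangle L_{\mc{B}_0}(v)|\leq\tfrac{\eps}{2}n^{r-1}.\]
The second inequality only measures the partition perturbation: $\mc{P}$ and $\mc{Q}$ disagree only on the ``bad'' vertices where the original label differs from $\sigma$, and each such vertex perturbs any ideal link by at most $O(n^{r-2})$ edges, so it suffices to bound the bad set by $o(\eps n)$. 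This bound follows from the averaging $\sum_v|L_{\mc{F}}(v)\triangle L_{\mc{B}}(v)|=r|\mc{F}\triangle\mc{B}|\leq r\delta n^r$ together with the disjointness of the $L_i^*(v)$.

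The heart of the argument is the first inequality, the \emph{key claim}. Call $v$ \emph{typical} if $|L_{\mc{F}}(v)\triangle L_{\mc{B}}(v)|\leq \eps n^{r-1}/20$; the averaging above shows all but at most $O(\delta/\eps)n$ vertices are typical, and for these the key claim is immediate with $\sigma(v)$ equal to the original part-label (by disjointness of ideal links, the original label wins the $\arg\min$ by a large margin). For atypical $v$ we will use $\mc{T}_r$-freeness. First, Lemma~\ref{transversal} applied to $\mc{F}$ restricted to the typical vertices (which satisfies the ``local closeness everywhere'' hypothesis of that lemma) shows that this restriction is $\mc{P}$-transversal. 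Then, supposing the key claim fails, we would have $|L_{\mc{F}}(v)\cap L_i^*(v)|$ comparable to $n^{r-1}$ for two distinct labels $i$; a double-counting argument, exploiting the near-balanced partition, produces two edges $F_1=\{v\}\cup J_1, F_2=\{v\}\cup J_2\in\mc{F}$ with $J_1\in L_i^*(v)$, $J_2\in L_j^*(v)$, $|J_1\cap J_2|=r-2$, and $J_1\cap J_2\subseteq V_{\text{typical}}$. The $\mc{T}_r$-free condition then forbids any edge through the pair $J_1\triangle J_2$ disjoint from $\{v\}\cup(J_1\cap J_2)$, while the transversality and minimum-degree properties on typical vertices supply many such candidate edges, a contradiction. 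The main obstacle is making this final counting argument work uniformly: atypical vertices carry no a priori upper bound on $|L_{\mc{F}}(v)|$, so one must carefully account for the ``excess'' part of the link not matching any $L_i^*(v)$ and show it is still compatible with a small symmetric difference from some ideal link, using both the global density hypothesis $|\mc{F}|\geq(\pl{e}(m,r)-\delta)n^r$ and the $\mc{T}_r$-free structure.
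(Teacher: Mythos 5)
Your proposal tracks the same broad strategy as the paper: find a near-optimal blowup $\mc{B}$ with partition $\mc{P}$, isolate a small set of ``bad'' vertices whose link does not match their assigned part, invoke Lemma~\ref{transversal} on the restriction to good vertices to get $\mc{P}$-transversality, and then use $\mc{T}_r$-freeness to pin down a correct label for each bad vertex. The arg-min reassignment $\sigma(v)$ and the appeal to disjointness of the ideal links $L_i^*(v)$ is a clean way to package the relabeling, and your second inequality (comparing $L_{\sigma(v)}^*(v)$ computed on $\mc{P}$ with $L_{\mc{B}_0}(v)$ computed on the new partition) is a correct perturbation estimate once one checks the relabeled set is $O(\delta/\eps)n$.

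The gap is in the heart of the key claim. Your $\mc{T}_r$-free contradiction is built around a cherry $F_1=\{v\}\cup J_1$, $F_2=\{v\}\cup J_2$ with $|J_1\cap J_2|=r-2$, so $D_1\triangle D_2 = J_1\triangle J_2 = \{a,b\}$ does \emph{not} contain the problematic vertex $v$. To finish you need an edge $D_3$ of $\mc{F}$ through $\{a,b\}$ avoiding $\{v\}\cup(J_1\cap J_2)$, which requires $|L_{\mc{F}}(\{a,b\})|$ to be a positive fraction of $n^{r-2}$. But the hypotheses and the ``typical'' condition only control each vertex's link at scale $n^{r-1}$: even for typical $a$, the bound $|L_{\mc{F}}(a)\triangle L_{\mc{B}}(a)|\leq\eps n^{r-1}/20$ is consistent with all $\Theta(n^{r-2})$ blowup edges through $\{a,b\}$ being missing, so the pair codegree in $\mc{F}$ could be $0$ and no contradiction follows from a single cherry. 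The paper avoids this in two ways that your sketch does not capture: in the argument that $K\neq\emptyset$, the cherry is $D_1=\{u\}\cup J$, $D_2=J\cup\{v_r\}$ so that $D_1\triangle D_2=\{u,v_r\}$ \emph{contains} the bad vertex $u$ and the partner $v_r$ is drawn from $L_j(u)$, a set \emph{defined} by the property $|L_{\mc{F}}(\{u,v_r\})|\geq\gamma n^{r-2}$, which supplies exactly the pair-codegree lower bound needed; and in Claim~\ref{claim:K=1} the forced upper bound $|L_{\mc{F}}(\{v_1,v_2\})|\leq(r-1)n^{r-3}$ is not used as a direct contradiction but is summed over many pairs to produce $\Omega(\gamma^2 n^r)$ missing edges, contradicting $d_{\mf{B}}(\mc{F})\leq\delta n^r$. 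Your phrase ``a double-counting argument'' may have this in mind, but as written the sketch asserts a contradiction from a single pair, which does not hold. A related loose end is the existence of $T_1\in L_{\mc{S}}(i)$, $T_2\in L_{\mc{S}}(j)$ with $|T_1\cap T_2|=r-2$ both realized by many tuples in $L_{\mc{F}}(v)$; this requires a pigeonhole step that is absent. You do correctly flag the ``excess'' issue (the part of $L_{\mc{F}}(v)$ outside $\bigcup_i L_i^*(v)$) but the codegree issue above is the more fundamental obstruction and is not addressed.
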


\begin{proof}[Proof of Lemma~\ref{theorem2}:] 
Let $\eps_{\ref{transversal}}$ be chosen to satisfy Lemma~\ref{transversal}. We choose $$ 0<\delta\ll\eps_{\ref{sizelemma}} \ll \gamma\ll\min\{\eps_{\ref{transversal}},\eps\}$$ to satisfy the constraints appearing further in the proof.
Let $\delta_{\ref{sizelemma}}$ be chosen to satisfy Lemma~\ref{sizelemma} applied with $\eps=\eps_{\ref{sizelemma}}$. We assume that
$\delta \ll \delta_{\ref{sizelemma}}$.

Let $\mc{B}\in \mf{B}$ be such that $ |\mc{F}\triangle\mc{B}|= d_{\mf{B}}(\mc{F})$, and let $\mc{P}=\{P_1, P_2, \dots, P_m\}$ be the blowup partition of $\mc{B}$. Since $\delta< \delta_{\ref{sizelemma}}$ we have
 \[|\mc{F}|\geq (\pl{e}(m,r) - \delta)n^r \geq (\pl{e}(m,r) - \delta_{\ref{sizelemma}})n^r.\] Hence,  \(\mc{B}\) is \(\eps_{\ref{sizelemma}}\)-balanced by Lemma~\ref{sizelemma}.
 
Consider the set \[J:=\left\{v\in V(\mc{F}) | \left| L_{\mc{F}}(v)\triangle  L_{\mc{B}}(v)\right| > \gamma n^{r-1}\right\}.\] 
We have
\[|J|\gamma  n^{r-1}<\sum_{i\in [n]}{\left| L_{\mc{F}}(i)\triangle  L_{\mc{B}}(i)\right| } =r|\mc{F}\triangle \mc{B}|\leq \delta rn^r.\]
Let $\delta_1:=\delta r /\gamma$, then
$|J| \leq \delta_1 n$, by the above. Let $\mathcal{F}' := \mathcal{F}|_{V(\mc{F})\setminus J}$, $n'=\pl{v}(\mc{F}')$, $\mathcal{B}':= \mathcal{B}|_{V(\mc{F})\setminus J}$, $P_j':=P_j\setminus J$ for each $j\in[m]$, and $\mc{P}'=\{P'_1, P'_2, \dots, P'_m\}$. The graph \(\mc{F}'\) satisfies the assumptions of Lemma~\ref{transversal}. Indeed, for every \(v\in V(\mc{F}')\),
\[| L_{\mc{F}'}(v)\triangle  L_{\mc{B}'}(v)| \leq \gamma n^{r-1} \leq\eps_{\ref{transversal}} (1-\delta_1)^{r-1} n^{r-1}
\leq
\eps_{\ref{transversal}} (n')^{r-1}.\]
Similarly,
\begin{align*}|\mc{F}|\geq (\pl{e}(m,r) -\eps_{\ref{transversal}}) (n')^{r-1}.
\end{align*}
Thus both  $\mc{F}'$ and $\mc{F}$ are $\mc{P}$-transversal by Lemma~\ref{transversal}. Our next goal is to extend \(\mc{B}'\) to a blowup \(\mc{B}_0\) of $\mc{S}$ with $V(\mc{B}_0)=V(\mc{F})$, as follows. For each $u\in J$  we will find a unique index $j_{u} \in [m]$, such that $u$ ``behaves" as the vertices in the partition class $P_{j_u}'$, and add the vertex \(u\) to this partition class. By doing so for all vertices of \(J\), we will extend the partition \(\mc{P}'\), and since \(J\) has relatively small size, this operation will not increase the degrees of vertices in \(\mc{F}'\) drastically. So let us fix some \(u \in J\) and show that such an index \(j_u\) exists.

For $I \subseteq [m]$, let $$E_I(u):=\{F \in \mc{F} \:  | u\in F, \: |F \cap P'_i| =1 \: \mathrm{for\: every}\: i \in I  \}.$$
We construct an auxiliary   $(r-1)$-graph  $\mc{L}(u)$ with $V(\mc{L}(u))=[m]$ such that $I\in \mc{L}(u)$ if and only if $\left|E_I(u)\right|\geq \gamma n^{r-1}.$
We aim to show that there exists a unique $j_u \in [m]$ such that $\mc{L}(u)$ is isomorphic to the link graph of $j_u$ in $\mc{S}$. 

We start by proving that $\mc{L}(u)$ is at least as large as any of the link graphs \(L_{\mc{S}}(j)\), for \(j\in [m]\).
Denote by $E_J(u)$ the set of all the edges in $\mc{F}$ that contain $u$ and at least one other vertex from $J$. Clearly,
\(|E_J(u)|\leq |J|n^{r-2} \leq \delta_1 n^{r-1}.\) Therefore, 
\begin{align*}
(\pl{d}(m,r)-\delta)n^{r-1}&\leq | L_{\mc{F}}(u)|
\leq  |E_J(u)|+\sum_{I\in \mc{L}(u)}{|E_I(u)|} + \sum_{I\notin \mc{L}(u)}{|E_I(u)|}\\
&\leq \delta_1 n^{r-1}+ |\mc{L}(u)| \left(\frac{1}{m}+\eps_{\ref{sizelemma}}\right)^{r-1}n^{r-1}+ \gamma {m \choose r-1} n^{r-1}.
\end{align*}
It follows that 
\begin{align*}
|\mc{L}(u)|&\geq \frac{\pl{d}(m,r)m^{r-1}}{(1+\eps_{\ref{sizelemma}}m)^{r-1}} - \frac{(\delta+\delta_1 +\gamma/(r-1)!)m^{r-1}}{(1+\eps_{\ref{sizelemma}}m)^{r-1}}\\ &> \pl{d}(m,r)m^{r-1}-1,
\end{align*}
where the last inequality holds, as long as $\eps_{\ref{sizelemma}},\delta, \delta_1$ and $\gamma$ are sufficiently small compared to $1/m^r$.
It follows that $|\mc{L}(u)|\geq \pl{d}(m,r)m^{r-1} = |L_{\mc{S}}(j)|$ for any \(j\in[m]\). 

Next, we find \(j_u\) such that \(\mc{L}(u)\subseteq L_{\mc{S}}(j_u)\). For every $j \in [m]$ consider
 \[ L_{j}(u) := \{v \in P_j' : |L_{\mc{F}}(\{u,v\})|\geq \gamma n^{r-2} \},\]
that is, \( L_{j}(u)\) is the set of vertices in the partition class \(P_j'\) which are in relatively many edges with \(u\).  Let \(K=\{j : \left| L_{j}(u)\right|< \gamma n\}\). We want to show that $|K|=1$, from which it will follow that \(u\) essentially behaves as the vertices of the partition class corresponding to this unique index in \(K\). 

First, let us prove that  \(K \neq \emptyset\). Fix $I\in \mc{L}(u)$. As $\mc{S}$ is a Steiner system, there exists unique $j$ such that  $I\cup \{j\}\in \mc{S}$. We claim that $j \in  K$. Assume not, and further assume, without loss of generality, that \(I=\{1,2,\dots, r-1\}\). Then there  exists $\{v_1,v_2, \dots, v_{r-1}\}\in E_I(u)$ and $v_r\in  L_{j}(u)$, such that $\{v_1,v_2, \dots, v_{r-1},v_r\}\in\mc{F}$. Otherwise, for every $F\in E_I(u)$ and every \(v\in L_j(u)\), $\left(F\setminus\{u\}\right)\cup \{v\}$ is a missing edge. Hence,
\[|\mc{F}\triangle \mc{B}| \geq |E_I(u)| | L_j(u)| \geq \gamma n^{r-1}\cdot \gamma n >\delta n^r, \]
a contradiction. 

Let $v_1,v_2, \dots, v_{r-1}, v_r$ be as above. Since $\mc{F}$ is $\mc{T}_r $-free, every edge in $\mc{F}$ that contains both $u$ and $v_r$, must also contain a vertex among $\{v_1,v_2, \dots, v_{r-1}\}$. Therefore, we must have
\(|L(\{u,v_r\})| \leq (r-1)n^{r-3}\), while, by definition of \( L_j(u)\), \(|L(\{u,v_r\})|\geq\gamma n^{r-2}\), 
yielding a contradiction when $n$ is large enough. Thus $K \neq \emptyset$. Note that if we prove that $K = \{j_u\}$ for some index \(j_u\), then since $|\mc{L}(u)|\geq m^{r-1}\pl{d}(m,r)=|L_{\mc{S}}(j_u)|$, it will follow that $\mc{L}(u)=L_{\mc{S}}(j_u)$.

\begin{claim}\label{claim:K=1} $|K|= 1$.
\end{claim}
\begin{proof}
Let \(k:=|K|\), we have already shown that \(k\geq 1\). Suppose for a contradiction that \(k\geq 2\). Let \(A\) be a $\mc{P}'$-transversal $(r-2)$-tuple. We want to show that
\begin{equation}\label{linkofiandI}|L(A\cup \{u\})|\leq\left (\frac{1}{m} + \eps_{\ref{sizelemma}} + \gamma(m-1)\right) n.
\end{equation}
Suppose that there exist $j_1\neq j_2$ such that $|L(A\cup \{u\})\cap P_{j_1}| \geq \gamma n$ and $|L(A\cup \{u\})\cap P_{j_2}| \geq \gamma n$. Since $\mc{F}$ is $\mc{T}_r $-free, for every $v_1\in L(A\cup \{u\})\cap P_{j_1}$ and $v_2\in L(A\cup \{u\})\cap P_{j_2}$, we must have
$$|L(\{v_1,v_2\})|\leq (r-1)n^{r-3}.$$ 
It follows that 
\begin{align*}
|\mc{F}\triangle\mc{B}|&\geq \gamma^2 n^2\left(\left(\frac{1}{m}-\eps_{\ref{sizelemma}}\right)^{r-2} n^{r-2} - (r-1){n \choose r-3}\right)\\
&{\geq} \frac{1}{2}\gamma^2 \left(\frac{1}{m}-\eps_{\ref{sizelemma}}\right)^{r-2} n^{r}>\delta n^r,
\end{align*}
which is a contradiction. Thus, no such $j_1$ and $j_2$ exist, and (\ref{linkofiandI}) follows. 

Using (\ref{linkofiandI}), we  obtain an upper bound on \(E_I(u)\), for every \((r-2)\)-tuple \(I\subseteq [m]\). Without loss of generality, suppose $I=\{1,2, \dots, {r-2}\}$. We apply  (\ref{linkofiandI}) to every \(A\in [n]^{r-2}\) which is \(I\)-transversal in \(\mc{F}\) (i.e. $|A\cap P_i| =1$ for every $i\in I$). As
$$\prod_{j=1}^{r-2} |P_{j}| \leq \left(\frac{n}{m}+\eps_{\ref{sizelemma}}n\right)^{r-2},$$
we derive
\begin{equation}\label{linkofiandI2}|E_I(u)|\leq\left(\frac{1}{m}+\eps_{\ref{sizelemma}}\right)^{r-2} \left(\frac{1}{m} + \eps_{\ref{sizelemma}} + \gamma(m-1)\right)n^{r-1}.
\end{equation}

And finally, we are ready to derive an upper bound on the size of \( L_{\mc{F}}(u)\), which will contradict the initial assumption \(| L_{\mc{F}}(u)| \geq (\pl{d}(m,r)-\delta) n^{r-1}\):
\begin{align*}
| L_{\mc{F}}(u)|&\leq  |E_J(u)|+ \sum_{I\subseteq[m],|I|=r-1 \atop{I\cap K = \emptyset}}{|E_I(u)|} + \sum_{I\subseteq[m],|I|=r-1 \atop{I\cap K \neq \emptyset}}{|E_I(u)|} 
\\
&  \leq |J|n^{r-2}+ \frac{1}{r-1}\sum_{I\subseteq[m],|I|=r-2 \atop{I\cap K = \emptyset}}{|E_I(u)|} \\& + \sum_{j\in K}\left(| L_{j}(u)| n^{r-2} +(n-| L_{j}(u)|)\gamma n^{r-2}\right)
\\
&\stackrel{(\ref{linkofiandI2})}\leq \frac{1}{r-1}\binom{m-s}{r-2}\left(\frac{1}{m}+\eps_{\ref{sizelemma}}\right)^{r-2} \left(\frac{1}{m} + \eps_{\ref{sizelemma}} + \gamma(m-1)\right)n^{r-1}
\\&+\delta_1n^{r-1}+ 2\gamma m n^{r-1}\\
&\leq \left( \left(\frac{\binom{m-2}{r-1}}{m^{r-1}} +\gamma\right)+2\gamma m + \delta_1\right) n^{r-1}\\
&< (\pl{d}(m,r)-\delta) n^{r-1},
\end{align*}
a contradiction. Thus, \(k = 1\).
\end{proof}

As discussed above, Claim~\ref{claim:K=1} implies that for every \(u\in J\) there exists unique \(j_u\) such that $\mc{L}(u)=L_{\mc{S}}(j_u)$. We  extend the blowup \(\mc{B}'\) as we discussed earlier. For every \(j\in [m]\), define $$P_{j}^{0}:=P_j'\cup \{u\in J \: |\:  j_u = j\}.$$ Let $\mc{B}_0 \supseteq \mc{B}'$ be the blowup of $\mc{S}$ with the blowup partition \(\mc{P}_0\).  
\begin{claim}\label{B0claim}For every \(v\in V(\mc{F})\), 
\[| L_{\mc{B}_{0}}{(v)}\triangle  L_{\mc{F}}(v)|\leq \eps n^{r-1}.\]
\end{claim}
\begin{proof}
For each $v\in V(\mc{F})\setminus J$, we have 
\begin{align*}| L_{\mc{B}_{0}}{(v)}\triangle  L_{\mc{F}}(v)|&\leq | L_{\mc{B}'}{(v)}\triangle  L_{\mc{F'}}(v)| + |J|n^{r-2} \\
&\leq \gamma n^{r-1} + \delta_1 n^{r-1} \leq \eps n^{r-1}.
\end{align*}
We now consider $v\in J$. Since $\mc{F}$ is $\mc{P}'$-transversal, it follows that for every $F \in  L_{\mc{F} \setminus \mc{B}_0}(v)$, either $F \cap J \neq \emptyset$, or there exists $I \not \in \mc{L}(v)$ such that $F \in L_I(v)$. Thus,
\begin{equation}\label{eq:theorem2} | L_{\mc{F} \setminus \mc{B}_0}(v) |
\leq \delta_1 n^{r-1} +  \left({m \choose r-1} - |\mc{L}(v)|\right)\gamma n^{r-1}< \frac{\eps}{8}n^{r-1}.
\end{equation}
Finally,
\begin{align*} |& L_{\mc{F}}(v)\triangle  L_{\mc{B}_0}(v)| = 
2|  L_{\mc{F} \setminus\mc{B}_0}(v) | + | L_{\mc{B_0}}(v)| - | L_{\mc{F}}(v)| \\ &\stackrel{(\ref{eq:theorem2})}\leq \frac{\eps}{2}n^{r-1} +\pl{d}(m,r) \left(\frac{1}{m}+\eps_{\ref{sizelemma}}+\delta_1\right)^{r-1}n^{r-1}-(\pl{d}(m,r)-\delta)n^{r-1}\\&\leq \eps n^{r-1},
\end{align*}
as desired.
\end{proof}
By Claim~\ref{B0claim} the blowup \(\mc{B}_0\) satisfies the conclusion of the lemma, thus finishing the proof.
\end{proof}
\vskip 10pt
We are now ready for the proof of Theorem~\ref{thm:localstability}.

\begin{proof}[Proof of Theorem~\ref{thm:localstability}.] Our goal is to show that there exist $\eps,\alpha,n_0>0$ such that the following holds. If $\mc{F}\in \brm{Forb}(\mc{T}_r)$ with $\pl{v}(\mc{F})= [n]$, $n \geq n_0$ such that  $d_{\mf{B}}(\mc{F})\leq \eps n^r$, and $| L_{\mc{F}}(v)|\geq (\pl{d}(m,r)- \eps)n^{r-1}$ for every $v\in V(\mc{F})$,  then
\begin{equation}
\label{eq:localstab}
|\mc{F}|\leq m(\mf{B},n) - \alpha d_{\mf{B}}(\mc{F}).
\end{equation}
In fact, we show that one can take $\alpha = \frac{1}{2}$. Now we specify dependencies between constants used further in the proof. Let $\eps_{\ref{transversal}}$ be taken to satisfy Lemma~\ref{transversal}. Define $\eps_{\ref{sizelemma}}:=\frac{1}{4m}$. Let $\delta_{\ref{sizelemma}}$ be taken to satisfy  Lemma~\ref{sizelemma}  applied with $\eps=\eps_{\ref{sizelemma}}$. We choose $0\ll \eps \ll \eps_{\ref{theorem2}}\ll \min\{\delta_{\ref{sizelemma}},\eps_{\ref{transversal}}\}$ to satisfy the inequalities appearing in the proof. In particular, we will use $\eps < \delta_{\ref{theorem2}}/2$, where $\delta_{\ref{theorem2}}$ is chosen to satisfy Lemma~\ref{theorem2} applied with $\eps_{\ref{theorem2}}$.

We can assume that 
\[|\mathcal{F}|\geq (\pl{e}(m,r)-2\eps)n^r \geq (\pl{e}(m,r)-\delta_{\ref{theorem2}})n^r,\] 
since otherwise the result follows directly with $\alpha =1$.  By Lemma~\ref{theorem2} there exists $\mc{B}\in \mf{B}$ with $V(\mc{B}) =V(\mc{F})$ such that 
$$| L_{\mc{F}}(v)\triangle  L_{\mc{B}}(v)|\leq \eps_{\ref{theorem2}}n^{r-1}$$ for every $v\in V(\mc{F}).$

Recall the definitions of missing and bad edges at the beginning of this section. Generalizing these notions, we introduce the following notation. For every $I\subset V(\mc{F})$ with $0\leq |I|\leq r$, we denote
\[A(I) :=\{F\in \mc{B}\setminus \mathcal{F}| I\subseteq F\},\]
\[B(I) :=\{F\in \mathcal{F}\setminus \mc{B}| I\subseteq F\},\]
$a(I) := |A(I)|$ and $b(I) :=|B(I)|$. So \(a(I)\) and \(b(I)\) respectively denote the number of missing and bad edges that the tuple \(I\) is in. We have $\mc{F}\triangle \mc{B} = A(\emptyset) \cup B(\emptyset)$ and \(|\mc{F}\triangle \mc{B}| = a(\emptyset) + b(\emptyset)\).  It is easy to see that for every  $I$, such that $0\leq |I|\leq r-1$, the following inequalities hold
\begin{equation}
\label{a}
\sum_{j\notin I}{a(I\cup \{j\})}\geq a(I)\geq \frac{1}{r}\sum_{j\notin I}{a(I\cup \{j\})},
\end{equation}
\begin{equation}
\label{b}
\sum_{j\notin I}{b(I\cup \{j\})}\geq b(I)\geq \frac{1}{r}\sum_{j\notin I}{b(I\cup \{j\})}.
\end{equation}
It is not hard to see that to derive the inequality (\ref{eq:localstab}) it suffices to show that $a(\emptyset)\geq 3 b(\emptyset)$. Let us assume for a contradiction that $b(\emptyset)>\frac{1}{3}a(\emptyset)$. Our next claim shows that we can bound the number of bad edges that contain some \(i\)-tuple from above by the proportion of the missing edges that contain any of its \((i-1)\)-subtuples.

\begin{claim}\label{amplificationlem}There exists $c>0$ such that for every $I \subseteq V(\mc{F}), 1 \leq |I| \leq r$, and every $I'\subset I$ with  $|I'|=|I|-1$, we have $a(I')\geq c b(I)n$.
\end{claim}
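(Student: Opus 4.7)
The plan is to produce, for each bad edge $F\supseteq I$, many witness missing edges containing $I'$ by combining $\mc{T}_r$-freeness of $\mc{F}$ with the link closeness of $\mc{F}$ and $\mc{B}$ guaranteed by Lemma~\ref{theorem2}. For a transversal set $J$ let $\pi(J):=\{i\in[m]: J\cap P_i\neq\emptyset\}$; the edges of $\mc{B}$ are precisely the transversal $r$-sets $J$ with $\pi(J)\in\mc{S}$. Let $v\in I\setminus I'$ be the unique such vertex, and for each bad edge $F\supseteq I$ set $T_F:=F\setminus\{v\}$. By Lemma~\ref{transversal} (applied with $\eps_{\ref{theorem2}}\leq\eps_{\ref{transversal}}$) the graph $\mc{F}$ is $\mc{P}$-transversal, so $T_F$ is a transversal $(r{-}1)$-set; since $F\notin\mc{B}$ we have $\pi(F)\notin\mc{S}$, which forces the unique $j_F\in[m]$ with $\pi(T_F)\cup\{j_F\}\in\mc{S}$ to differ from $\pi(v)$. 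Hence for every $w\in P_{j_F}$ the edge $T_F\cup\{w\}$ lies in $\mc{B}$ and contains $I'$. Set $k_F:=|\{w\in P_{j_F}: T_F\cup\{w\}\in\mc{F}\}|$, so that $|P_{j_F}|-k_F$ of these edges are missing.

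The crux is to bound $k_F$ from above. I would show $k_F\leq\beta n$ for some constant $\beta=\beta(m,r)>0$. Suppose for contradiction $k_F\geq\beta n$ for some $F\in B:=\{F\in\mc{F}\setminus\mc{B}: I\subseteq F\}$. For each such $w$ the edges $F=T_F\cup\{v\}$ and $T_F\cup\{w\}$ are both in $\mc{F}$, share $T_F$, and $\{v,w\}\cap T_F=\emptyset$; the $\mc{T}_r$-free assumption forces every edge of $\mc{F}$ containing $\{v,w\}$ to meet $T_F$, so $|L_{\mc{F}}(\{v,w\})|\leq (r-1)n^{r-3}$. On the other hand $\pi(v)\neq\pi(w)=j_F$, and a direct count in the $\eps_{\ref{sizelemma}}$-balanced blowup $\mc{B}$ gives $|L_{\mc{B}}(\{v,w\})|\geq C_1 n^{r-2}$ for an explicit constant $C_1=C_1(m,r)>0$ (number of Steiner edges containing $\{\pi(v),j_F\}$, each contributing $\approx (n/m)^{r-2}$ transversal $(r-2)$-sets). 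Hence $|L_{\mc{F}}(\{v,w\})\triangle L_{\mc{B}}(\{v,w\})|\geq \tfrac{1}{2} C_1 n^{r-2}$ for large $n$. Summing over the $\geq\beta n$ such $w$ and using the identity
\[\sum_{w}|L_{\mc{F}}(\{v,w\})\triangle L_{\mc{B}}(\{v,w\})| = (r-1)|L_{\mc{F}}(v)\triangle L_{\mc{B}}(v)|\]
produces $|L_{\mc{F}}(v)\triangle L_{\mc{B}}(v)|\geq \frac{\beta C_1}{2(r-1)} n^{r-1}$, contradicting the conclusion of Lemma~\ref{theorem2} once $\eps_{\ref{theorem2}}$ is chosen sufficiently small in terms of $m,r$.

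Therefore $k_F\leq\beta n$ for every $F\in B$, and each such $F$ supplies at least $|P_{j_F}|-k_F\geq (1/m-\eps_{\ref{sizelemma}}-\beta)n=:c'n$ missing edges containing $T_F\supseteq I'$. Summing yields $\sum_{F\in B}(|P_{j_F}|-k_F)\geq c'n\cdot b(I)$. A missing edge $F^*\supseteq I'$ is counted at most $r$ times in this sum, because any $T_F\subseteq F^*$ is an $(r{-}1)$-subset of $F^*$ and $F^*$ has only $r$ such subsets. Dividing gives $a(I')\geq (c'/r)\,b(I)\,n$, which is the claim with $c:=c'/r$. The main subtlety is the contradiction step, which demands that $\eps_{\ref{theorem2}}$ be small compared with $\beta C_1/(r-1)$; this is consistent with the hierarchy of constants already declared at the start of the proof of Theorem~\ref{thm:localstability}.
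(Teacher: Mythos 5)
Your proof is correct, and it takes a genuinely different route from the paper's. The paper proceeds by induction on $r-|I|$: the base case $|I|=r$ produces roughly $n/(4m)$ missing edges containing $I'$ from a single bad transversal $r$-edge (using $\mc{T}_r$-freeness to show that $L(I')\cap P_{r+1}$ is small, then taking the complement in $P_{r+1}$), and the inductive step is a pure shadow-counting argument via the relations (\ref{a}) and (\ref{b}), with no further use of the hypergraph structure. You instead give a direct argument uniformly for all $1\leq |I|\leq r$: for each bad edge $F\supseteq I$ you identify the Steiner part $j_F$ that would have completed $T_F=F\setminus\{v\}$ to a $\mc{B}$-edge, show via $\mc{T}_r$-freeness and the link-approximation conclusion of Lemma~\ref{theorem2} that only $O(\eps_{\ref{theorem2}})\,n$ of the vertices in $P_{j_F}$ extend $T_F$ to an $\mc{F}$-edge, and harvest $\Omega(n)$ missing edges through $T_F\supseteq I'$ from each $F$; a trivial multiplicity bound of $r$ then yields the claim. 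The ``crux'' step in your proof is essentially isomorphic to the paper's base case (same use of $\mc{T}_r$-freeness to bound $|L(\{v,w\})|\leq (r-1)n^{r-3}$, same use of $\eps_{\ref{sizelemma}}$-balance to lower-bound the $\mc{B}$-link, same contradiction with $\eps_{\ref{theorem2}}$), but channelled through the double-counting identity $\sum_w |L_{\mc{F}}(\{v,w\})\triangle L_{\mc{B}}(\{v,w\})| = (r-1)|L_{\mc{F}}(v)\triangle L_{\mc{B}}(v)|$ rather than through the quantity $a(\{v_r\})$. Your approach dispenses with the induction altogether (and with the relations (\ref{a}), (\ref{b}) inside the claim), at the minor cost of tracking a multiplicity factor; the paper's inductive formulation cleanly isolates the combinatorial amplification from the single hard counting step and keeps the inequalities (\ref{a}), (\ref{b}) front and center, since they are reused later in the derivation of Theorem~\ref{thm:localstability}.
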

\begin{proof} We proceed by induction on $r-|I|$. We prove that for each $1 \leq i \leq r$, and every $I\subseteq [n]$ with $|I|=i$ there exists $c_i>0$ such that for all $I'\subset I$ and  $|I'|=i-1$, we have $a(I')\geq c_ib(I)n$. This clearly implies the claim.

We start the base case: $|I|=r$ and we assume that $I$ is a bad edge, as otherwise the statement is trivial. Let $\mc{P}=\{P_1, P_2, \dots, P_m\}$ be the blowup partition of $\mathcal{B}$. By our assumptions,
$|\mathcal{F}|\geq (\pl{e}(m,r)-\eps_{\ref{transversal}})n^r$
and $| L_{\mc{F}}(v) \triangle  L_{\mc{B}}(v)|\leq \eps_{\ref{theorem2}} n^r \leq \eps_{\ref{transversal}}n^r$ for every \(v\in V(\mc{F})\). Thus by Lemma~\ref{transversal} all bad edges in $\mc{F}$ are $\mc{P}$-transversal. 

Without loss of generality, assume $I=\{v_1,v_2,\dots,v_r\}$,
where $v_j\in P_j$, and $I'=\{v_1,v_2,\dots,v_{r-1}\}$. Since \(I\) is a bad edge, it means that $\{1,2,\dots, r\}\notin \mc{S}$ which implies that  $\{1,2,\dots, r-1, k\}\in \mc{S}$ for some \(k\neq r\). Without loss of generality, we assume $k=r+1$. 

Let $N:=L(I') \cap { P_{r+1}}$. For every $u\in N$, we have $$a(\{u,v_r\}) \geq (\min_{i}|P_i|)^{r-2}-|L(\{u,v_r\})|.$$ However, every edge that covers $u$ and $v_r$, must have a non-empty intersection
with $\{v_1,v_2,\dots,v_{r-1}\}$, as $\mc{F}$ is $\mc{T}_r $-free, therefore
\[|L(\{u,v_r\})|\leq (r-1)n^{r-3}.\]   
As $|\mathcal{F}| \geq (\pl{e}(m,r)-2\eps)n^r \geq (\pl{e}(m,r)-\delta_{\ref{sizelemma}})n^r$ the blowup \(\mc{B}\) is \(\epsilon_{\ref{sizelemma}}\)-balanced by Lemma~\ref{sizelemma}.  Therefore
\[a(\{v_r\}) \geq |N|\left(\left(\frac{n}{m}-\eps_{\ref{sizelemma}} n\right)^{r-2} - (r-1)n^{r-3} \right).\] 
But \( a(\{v_r\})\leq \eps_{\ref{theorem2}} n^{r-1}\) and we have
\[|N|\leq \frac{2\eps_{\ref{theorem2}}}{\left(\frac{1}{m}-\eps_{\ref{sizelemma}}\right)^{r-2}}n = 2\eps_{\ref{theorem2}}\left(\frac{4m}{3}\right)^{r-2}n \leq \frac{n}{2m}, \]
for sufficiently large $n$.
The latter directly implies that \(a(I')\geq |P_{r+1}\setminus N| \geq \frac{n}{4m}\), thus concluding the proof of the base case with $c_r= \frac{1}{4m}$.

We now turn to the induction step. For every $I'\subset I$ with $|I'|=|I|-1$ we have 
\begin{align*}
ra(I') &\stackrel{(\ref{a})}{\geq} \sum_{I'\subset J, \atop{ |J| = i}}{a(J)}
&\geq\sum_{I'\subset J, J\neq I \atop{ |J| = i}}{c_{i+1}b(J\cup I)n}
&\stackrel{(\ref{b})}{\geq}c_{i+1}b(I)n,
\end{align*}
where the second inequality follows from th induction hypothesis. Thus $a(I')\geq c_{i} b(I) n$, where $ c_i:=\frac{c_{i+1}}{r}>0$, as desired.
\end{proof}

Let $c$ be as in Claim~\ref{amplificationlem}.  Then $a(\emptyset)\geq cb(v)n$ for every $v\in V(\mc{F})$. Direct averaging shows that for every $I \subseteq V(\mc{F})$ with $0\leq |I|\leq r-1$ and every $c'>0$ such that $b(I) > c' a(I)$, there exists $v\notin I$ such that 
$b(I\cup \{v\})> c'a(I\cup \{v\})$. Therefore, since $b(\emptyset)>\frac{1}{3}a(\emptyset)$, there exists $v_1\in V(\mc{F})$ such that $b(\{v_1\})>\frac{1}{3}a(\{v_1\})$.
Similarly, $a(\{v_1\})\geq cb(\{v_1,v\})$ for every $v\in V(\mc{F}) \setminus \{v_1\}$, and there exists $v_2\in V(\mc{F}) \setminus \{v_1\}$, such that $b(\{v_1,v_2\})>\frac{1}{3}a(\{v_1,v_2\})$. Applying this argument iteratively, we get the following series of inequalities:
\begin{align*}
a(\emptyset)&\geq cb(\{v_1\})n >\frac{c}{3}a(\{v_1\})n\geq\frac{c^2}{3}b(\{v_1,v_2\})n^2>\frac{c^2}{9}a(\{v_1,v_2\})n^2\geq \dots \\
&>\frac{c^{r-1}}{3^{r-1}}a(\{v_1,v_2,\dots,v_{r-1}\})n^{r-1}
\geq \frac{c^r}{3^{r-1}}b(\{v_1,v_2,\dots,v_r\})n^r \\& > \frac{c^r}{3^{r}}a(\{v_1,v_2,\dots,v_r\})n^r. 
\end{align*}
In particular,  $b(\{v_1,v_2,\dots,v_r\})>0$, i.e.   $b(\{v_1,v_2,\dots,i_r\})=1$. Thus, 
\[a(\emptyset) > \frac{c^r}{3^{r-1}}n^r \geq\frac{\eps_{\ref{theorem2}}}{r}n^r \geq |\mc{F}\triangle\mc{B}|,\]
a contradiction. 
\end{proof}
\end{section}

\begin{section}{Proof of Theorem~\ref{thm:maintheorem}}\label{sec:finale}
In this section we combine all of the preceding results to prove Theorem~\ref{thm:maintheorem}. In fact, we prove a stronger theorem which directly implies Theorem~\ref{thm:maintheorem}.  We adopt the following notation for the rest of the section:  $\hat{\mf{F}}=\brm{Forb}(\Sigma_r)$ and \(\mf{F}^*:=\{\mc{F}\in \hat{\mf{F}} \: | \: \mc{F} \text{ covers pairs }\}\). We say that an $r$-graph $\mc{F}$ is \emph{uniquely dense} (\emph{around} \(\Sigma_r\)) if $\lambda(\mc{F},\xi_{\mc{F}}) \geq \lambda(\mc{F}^*,\mu)$ for every $\mc{F}^* \in \mf{F}^*$, $\mu \in \mc{M}(\mc{F}^*)$ and, further, the equality holds only when  $\mc{F}^*$ is isomorphic to $\mc{F}$ and $\mu = \xi_{\mc{F}^*}$. 

\begin{theorem}\label{thm:general}
If \(\mc{S}\) is a uniquely dense $(m,r,r-1)$ Steiner system for some \(m\geq r\geq 3\), then $\brm{Forb}(\mc{T}_r)$ is $\mf{B}(\mc{S})$-stable. 
\end{theorem}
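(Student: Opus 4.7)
The plan is to combine the local stability of $\brm{Forb}(\mc{T}_r)$ established in Section~\ref{sec:local} with the general machinery of Sections~\ref{sec:notation}--\ref{sec:symmetrization}, applied in part to the clonable subfamily $\hat{\mf{F}}=\brm{Forb}(\Sigma_r)$, and then to lift the conclusion from $\hat{\mf{F}}$ back to $\brm{Forb}(\mc{T}_r)$.

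First, since $\mc{S}$ is uniquely dense, $\xi_{\mc{S}}$ is the unique maximizer of $\lambda(\mc{S},\cdot)$, and in particular $\mc{S}$ is balanced. Theorem~\ref{thm:localstability} therefore yields that $\brm{Forb}(\mc{T}_r)$ is $\mf{B}(\mc{S})$-vertex locally stable, and since $\mf{B}(\mc{S})$ is clonable, Theorem~\ref{thm:narrowlocaltolocal} upgrades this to $\brm{Forb}(\mc{T}_r)$ being $\mf{B}(\mc{S})$-locally stable. The same local stability is automatically inherited by the subfamily $\hat{\mf{F}}\subseteq \brm{Forb}(\mc{T}_r)$.

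Second, I would verify the hypotheses of Theorem~\ref{thm:compactness} for $\mf{F}^*$. The family is thin: a pair-covering $\Sigma_r$-free graph cannot contain two edges $D_1,D_2$ with $|D_1\cap D_2|=r-1$, for otherwise the pair $D_1\triangle D_2$ would be covered by some edge $D_3\neq D_1,D_2$, producing a forbidden $\Sigma_r$. The unique density hypothesis ensures $\mf{F}^{**}=\{\mc{S}\}$ is non-empty, so Theorem~\ref{thm:compactness} yields that $\mf{F}^*$ is $\{\mc{S}\}$-weakly weight stable, equivalently $\mf{B}(\mc{S})$-weakly weight stable since $\lambda(\mf{B}(\mc{S}))=\lambda(\mc{S})$. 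I would also verify that $\hat{\mf{F}}$ is clonable: the three edges of any $\Sigma_r$-triple in a blowup are transversals of the blowup partition, and their projections to the base graph are three distinct edges forming a $\Sigma_r$ (the case when two projections coincide is ruled out because the third projected edge would then need two vertices in a single blowup class). With $\hat{\mf{F}}$ and $\mf{B}(\mc{S})$ both clonable and the preceding ingredients in place, Theorem~\ref{thm:symmetrization} applied with $\mf{F}=\hat{\mf{F}}$ and $\mf{H}=\mf{B}(\mc{S})$ concludes that $\hat{\mf{F}}$ is $\mf{B}(\mc{S})$-stable.

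The final and hardest step is transferring this stability from $\hat{\mf{F}}$ up to the strictly larger non-clonable family $\brm{Forb}(\mc{T}_r)$. Given $\mc{F}\in \brm{Forb}(\mc{T}_r)$, if $d_{\mf{B}(\mc{S})}(\mc{F})\leq \eps n^r$ then the local stability of $\brm{Forb}(\mc{T}_r)$ already gives the bound. Otherwise, my plan is to delete a small set of edges $X\subseteq \mc{F}$ to obtain $\mc{F}^{\sharp}=\mc{F}\setminus X\in \hat{\mf{F}}$, apply $\hat{\mf{F}}$-stability to $\mc{F}^{\sharp}$, and combine it with the elementary bounds $|\mc{F}|\leq |\mc{F}^{\sharp}|+|X|$ and $d_{\mf{B}(\mc{S})}(\mc{F}^{\sharp})\geq d_{\mf{B}(\mc{S})}(\mc{F})-|X|$. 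The main obstacle is bounding $|X|$ by a small constant times $d_{\mf{B}(\mc{S})}(\mc{F})$: $\mc{T}_r$-freeness forces every $\Sigma_r$-triple in $\mc{F}$ to have its third edge meet the shared $(r-1)$-tuple, which constrains but does not immediately bound the number of such triples. Controlling them should proceed by a counting/amplification argument in the spirit of Claim~\ref{amplificationlem}, leveraging local stability of $\brm{Forb}(\mc{T}_r)$ at intermediate stages.
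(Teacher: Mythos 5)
Your outline matches the paper's proof in its first four stages: extracting balancedness from unique density, applying Theorem~\ref{thm:localstability} and then Theorem~\ref{thm:narrowlocaltolocal} to obtain local stability of $\brm{Forb}(\mc{T}_r)$, verifying thinness and nonemptiness of $\mf{F}^{**}$ so that Theorem~\ref{thm:compactness} gives weak weight stability of $\mf{F}^*$, and invoking Theorem~\ref{thm:symmetrization} for the clonable pair $\hat{\mf{F}},\mf{B}(\mc{S})$ to get $\hat{\mf{F}}$-stability. Your sketched verifications of thinness and of clonability of $\hat{\mf{F}}$ are correct (the paper asserts both without proof). The case split $d_{\mf{B}(\mc{S})}(\mc{F})\le\eps n^r$ versus $d_{\mf{B}(\mc{S})}(\mc{F})>\eps n^r$ is also exactly what the paper does.

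The gap is in the final lifting step. You aim to produce $X\subseteq\mc{F}$ with $\mc{F}\setminus X\in\hat{\mf{F}}$ and $|X|$ bounded by a small multiple of $d_{\mf{B}(\mc{S})}(\mc{F})$, and you propose to achieve this via an amplification argument in the style of Claim~\ref{amplificationlem}; you acknowledge you do not know how to make this work. Two problems: first, such an argument is not in the paper and is not obviously correct (Claim~\ref{amplificationlem} is proved only under the strong hypotheses of Theorem~\ref{thm:localstability}, in particular link-proximity to a fixed blowup, which you do not have here); second, it is also more than you need. Since you have already reduced to the case $d_{\mf{B}(\mc{S})}(\mc{F})>\eps n^r$, a bound $|X|\le\eps' n^r$ for any fixed $\eps'\ll\eps$ suffices, because then $(\alpha+1)|X|\le(\alpha+1)\eps'n^r\le\tfrac{\alpha}{2}\eps n^r<\tfrac{\alpha}{2}d_{\mf{B}(\mc{S})}(\mc{F})$ and your chain of inequalities closes. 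The paper supplies exactly this bound as Theorem~\ref{FisFhatstable}, proved (in~\cite{pikhurko}) by the Hypergraph Removal Lemma: the members of $\Sigma_r$ are precisely the non-degenerate homomorphic images of $\mc{T}_r$, a $\mc{T}_r$-free $r$-graph therefore has only $O(n^{2r-2})=o(n^{2r-1})$ such homomorphic copies, and so by removal one can delete $o(n^r)$ edges to make it $\Sigma_r$-free. Replacing your proposed amplification with this Removal Lemma appeal completes the argument and reproduces the paper's proof.
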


Let \(\mc{S}_5\) and \(\mc{S}_6\) denote the unique $(11,5,4)$ and  $(12, 6,5)$ Steiner systems respectively.  The following result of Frankl and F\"{u}redi allows us to immediately derive Theorem~\ref{thm:maintheorem} from Theorem~\ref{thm:general}.

\begin{theorem}[P. Frankl, Z. F\"{u}redi, \cite{franklfuredi}] \label{thm:FranklFuredi} $\mc{S}_5$ and $\mc{S}_6$ are uniquely dense.
\end{theorem}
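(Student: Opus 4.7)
The approach, due to Frankl and F\"uredi \cite{franklfuredi}, is the Lagrangian method applied to $\Sigma_r$-free pair-covering hypergraphs. First, invoke the compactness machinery behind Theorem \ref{thm:compactness} to extract a maximizer $(\mc{F}^*, \mu)$ of $\lambda$ over pairs with $\mc{F}^* \in \mf{F}^*$ and $\mu \in \mc{M}(\mc{F}^*)$, and reduce to the case where $\mu$ has full support and $\mc{F}^*$ covers pairs on $V(\mc{F}^*)$.

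Next, observe the key structural fact that any $\Sigma_r$-free pair-covering $r$-graph is \emph{thin}. Indeed, if two edges $D_1, D_2 \in \mc{F}^*$ shared $r-1$ vertices, the symmetric difference $D_1 \triangle D_2$ would be a pair, contained in some edge $D_3 \in \mc{F}^*$ by pair-covering, producing the forbidden $\Sigma_r$ configuration. Hence every $(r-1)$-subset of $V(\mc{F}^*)$ lies in at most one edge.

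Third, combine the first-order optimality conditions at $\mu$,
\[
\sum_{I : I \cup \{v\} \in \mc{F}^*} \prod_{i \in I} \mu(i) = r\lambda(\mc{F}^*, \mu) \qquad (v \in V(\mc{F}^*)),
\]
with thinness and a smoothing argument along pairs of support vertices of unequal weight (in the spirit of Theorem \ref{thm:symmetrization}) to force $\mu = \xi$ uniform on an $n$-element set $V(\mc{F}^*)$. Thinness then yields $|\mc{F}^*| \leq \binom{n}{r-1}/r$, whence $\lambda(\mc{F}^*, \mu) = |\mc{F}^*|/n^r \leq \pl{e}(n, r)$, with equality iff $\mc{F}^*$ is an $(n, r, r-1)$ Steiner system. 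The problem reduces to maximizing $\pl{e}(n, r)$ over those $n$ admitting such a Steiner system, with a separate case analysis for small $n$ where no system exists. For $r = 5$ this pins down $n = 11$, and for $r = 6$ it gives $n = 12$; uniqueness of the resulting Steiner systems is classical, via the Mathieu groups $M_{11}$ and $M_{12}$.

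The main obstacle is the interplay between Step 3 and the final optimization. As a function of $n$, the density $\pl{e}(n, r)$ actually peaks for values smaller than $11, 12$ (for instance $\pl{e}(8, 5) > \pl{e}(11, 5)$), but for those $n$ no $(n, r, r-1)$ Steiner system exists. One must therefore rule out near-admissible $n$ by a direct combinatorial argument, showing that any thin pair-covering $r$-graph on $n$ vertices has strictly fewer than $\binom{n}{r-1}/r$ edges by a margin sufficient to overcome the $\pl{e}(n, r) - \pl{e}(m_r, r)$ gap. Step 3 itself is also nontrivial: establishing uniformity of $\mu$ requires the rigidity of thin pair-covering structures combined with a careful quadratic analysis of the Lagrangian polynomial along weight-transfer paths, and forms the technical core of the Frankl--F\"uredi argument.
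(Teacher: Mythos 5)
First, a point of context: the paper does not prove this statement at all --- it is imported verbatim from Frankl and F\"uredi \cite{franklfuredi} and used as a black box (Theorem~\ref{thm:FranklFuredi} feeds into Theorem~\ref{thm:general} to yield Theorem~\ref{thm:maintheorem}). So there is no in-paper proof to compare against; what you have written is an outline of the external argument, and it has to be judged as such.

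As a proof, your proposal has genuine gaps. Steps 1 and 2 are fine (the thinness observation is exactly the one the paper makes in Section~\ref{sec:finale}, and is correct: if $|D_1\cap D_2|=r-1$ then $D_1\triangle D_2$ is a pair, covered by some $D_3\neq D_1,D_2$, giving a $\Sigma_r$-configuration). But Step 3 and the final optimization are precisely where the entire content of the Frankl--F\"uredi theorem lives, and you defer both. Two concrete problems. (i) The smoothing ``in the spirit of Theorem~\ref{thm:symmetrization}'' cannot force $\mu=\xi$: that symmetrization moves weight between a pair of vertices \emph{not} covered by a common edge (there $\lambda$ is affine in the transfer parameter), whereas in a pair-covering graph every pair is covered, the Lagrangian is genuinely quadratic along the transfer path, and the first-order conditions alone do not make $\mu$ uniform for a general thin pair-covering $r$-graph that is not a Steiner system. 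One cannot first establish uniformity of $\mu$ and then count edges; the weighting and the structure must be pinned down together, which is the actual work of \cite{franklfuredi}. (ii) Even granting uniformity and the bound $|\mc{F}^*|\leq\binom{n}{r-1}/r$, you correctly note that $\pl{e}(n,r)$ peaks at values of $n$ (e.g.\ $n=8,9$ for $r=5$) where no Steiner system exists, so one must show that thin pair-covering $r$-graphs on those $n$ fall short of $\pl{e}(11,5)$ (resp.\ $\pl{e}(12,6)$) --- but you only name this obstacle without resolving it. A proposal that flags the two hardest steps as ``the technical core'' and supplies neither is a roadmap, not a proof; if the intent is to cite \cite{franklfuredi} for those steps, that is legitimate, but then the honest statement is that the theorem is being quoted, which is what the paper does.
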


It remains to prove Theorem~\ref{thm:general}.

\begin{proof}[Proof of Theorem~\ref{thm:general}.] Let $\mf{F}:=\brm{Forb}(\mc{T}_r)$ and $\mf{B}:=\mf{B}(\mc{S})$. Note that a uniquely dense Steiner system is, in particular,  balanced.  Therefore, by Theorem~\ref{thm:localstability},  $\mf{F}$ is $\mf{B}$-vertex locally stable. Clearly $\mf{B}$ is clonable, thus from Theorem~\ref{thm:narrowlocaltolocal} it follows that  $\mf{F}$ is $\mf{B}$-locally stable. We derive $\mf{B}$-stability of  $\mf{F}$  from $\mf{B}$-stability of $\hat{\mf{F}}$ (which we will prove in a second) and the $\mf{B}$-local stability of \(\mf{F}\), combined with the following application of the Hypergraph Removal Lemma.

\begin{theorem}\label{FisFhatstable}For every $\eps>0$ there exists $n_0\in \mathbb{N}$ such that for every $\mc{F} \in \mf{F}$ with $\brm{v}(\mc{F})=n \geq n_0$ there exists $\hat{\mc{F}} \subseteq \mc{F}$, $\hat{\mc{F}} \in \hat{\mf{F}}$ such that $$|\hat{\mc{F}}| \geq |\mc{F}|-\eps n^r.$$
\end{theorem}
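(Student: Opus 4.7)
My approach is a direct counting argument that identifies edges of $\mc{F}$ that could play the role of $D_3$ in some $\Sigma_r$-copy and shows there are few of them. Call a pair $\{a,b\} \subseteq V(\mc{F})$ a \emph{cherry tip} if there exists an $(r-1)$-subset $C \subseteq V(\mc{F})\setminus\{a,b\}$ with both $C\cup\{a\}$ and $C\cup\{b\}$ in $\mc{F}$, and call an edge $F\in \mc{F}$ \emph{bad} if $F$ contains some cherry tip. The plan is to take $\hat{\mc{F}}$ to be $\mc{F}$ with all bad edges removed.

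The first step uses $\mc{T}_r$-freeness locally. If $\{a,b\}$ is a cherry tip with witness $C$, then any edge $F\in\mc{F}$ containing $\{a,b\}$ must meet $C$, since otherwise $F$, $C\cup\{a\}$, $C\cup\{b\}$ would form a copy of $\mc{T}_r$. Hence every $(r-2)$-tuple in $L_{\mc{F}}(\{a,b\})$ intersects $C$, which gives $|L_{\mc{F}}(\{a,b\})| \leq (r-1)\binom{n-3}{r-3} = O(n^{r-3})$. A double count then yields
\[ |\{F\in\mc{F}: F \text{ is bad}\}| \;\leq\; \sum_{\{a,b\}\text{ cherry tip}} |L_{\mc{F}}(\{a,b\})| \;\leq\; \binom{n}{2}\cdot O(n^{r-3}) \;=\; O(n^{r-1}), \]
because every bad edge contains at least one cherry tip and so is counted at least once on the right-hand side. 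For $n$ larger than some $n_0=n_0(r,\eps)$, the right-hand side is at most $\eps n^r$, so $|\hat{\mc{F}}| \geq |\mc{F}|-\eps n^r$.

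Finally, $\hat{\mc{F}}$ is $\Sigma_r$-free. Indeed, if $(D_1,D_2,D_3)$ were a $\Sigma_r$-copy in $\hat{\mc{F}}\subseteq\mc{F}$, then $(D_1,D_2)$ is a cherry in $\mc{F}$ with $\{a,b\}:=D_1\triangle D_2\subseteq D_3$ being a cherry tip of $\mc{F}$; this would make $D_3$ a bad edge of $\mc{F}$, but all bad edges were removed, contradicting $D_3\in\hat{\mc{F}}$. The only substantive step is the local link estimate, where the global $\mc{T}_r$-free hypothesis is converted into the bound $|L_{\mc{F}}(\{a,b\})| = O(n^{r-3})$ at every cherry tip; everything else is bookkeeping. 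In this sense no regularity machinery is actually required, but the overall structure -- bounding some local count using the forbidden subgraph and then deleting the few ``bad'' edges -- is precisely the skeleton of a removal-type argument.
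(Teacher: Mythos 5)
Your proof is correct, and it takes a genuinely different route from what the paper indicates: the paper frames Theorem~\ref{FisFhatstable} explicitly as ``an application of the Hypergraph Removal Lemma'' and then omits the proof, deferring to Pikhurko, whereas your argument is entirely self-contained and avoids regularity/removal machinery. The key observation --- that if $\{a,b\}$ has a witness $C$ making it a cherry tip, then $\mc{T}_r$-freeness forces every edge through $\{a,b\}$ to meet $C$, hence $|L_{\mc{F}}(\{a,b\})| \le (r-1)\binom{n-3}{r-3}$ --- is exactly right, and it correctly exploits the fact that $\mc{T}_r$ is the member of $\Sigma_r$ in which $D_3$ is \emph{disjoint} from $D_1\cap D_2$. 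Summing over the at most $\binom{n}{2}$ cherry tips gives $O(n^{r-1})$ bad edges, a polynomially stronger bound than the $o(n^r)$ that a removal-lemma argument would produce, with no hidden tower-type constants in the dependence of $n_0$ on $\eps$. Deleting these edges then kills every $\Sigma_r$-configuration, since in any $\Sigma_r$-copy $(D_1,D_2,D_3)$ the third edge $D_3$ contains the cherry tip $D_1\triangle D_2$ witnessed by $D_1,D_2 \in \hat{\mc{F}}\subseteq\mc{F}$ and hence was removed. Two small points worth making explicit in a write-up: (i) you should fix \emph{one} witness $C$ for each cherry tip before bounding $|L_{\mc{F}}(\{a,b\})|$, since a cherry tip may have several witnesses and the link bound is with respect to a single $C$; (ii) you should note that the three edges $F$, $C\cup\{a\}$, $C\cup\{b\}$ are genuinely distinct (which they are, since $F$ contains both $a$ and $b$ while each of the other two contains only one). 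Neither affects correctness.
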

We will omit the proof of Theorem~\ref{FisFhatstable}, an interested reader can find it in ~\cite{pikhurko}.  

It is easy to see that $\mf{F}^*$  is thin. Since \(\mc{S}\) is uniquely dense, we have
$$\{\mc{S}\}=\{\mc{F} \in \mf{F}^{*} \: | \: \lambda(\mc{F},\mu)= \lambda(\mf{F}^{*})\; \mathrm{for\; some} \; \mu \in \mc{M}(\mc{F})\}=:\mf{F}^{**}.$$

Thus Theorem~\ref{thm:compactness} implies that \(\mf{F}^*\) is \(\mf{B}\)-weakly weight stable, and therefore, by Theorem~\ref{thm:symmetrization} the family $\hat{\mf{F}}$ is $\mf{B}$-stable. Note that here we are using the fact that both $\hat{\mf{F}}$ and $\mf{B}$ are clonable families (unlike $\mf{F}$ which is not clonable).

Let $\alpha,\eps>0$ be such that $\mf{F}$ is $(\mf{B},\alpha,\eps)$-locally stable and $\hat{\mf{F}}$ is $(\mf{B},\alpha)$-stable. We claim that $\mf{F}$ is $(\mf{B},\alpha/2)$-stable. Indeed, consider $\mc{F} \in \mf{F}$ with $\brm{v}(\mc{F})=n$. We want to show that
\begin{equation}\label{eq:localstabilityend}
|\mc{F}|\leq m(\mf{B},n) - \frac{\alpha}{2} d_{\mf{B}}(\mc{F}),
\end{equation}
if $n$ is sufficiently large. If  $d_{\mf{B}}(\mc{F}) \leq \eps n^r$ then (\ref{eq:localstabilityend})  holds, as $\mf{F}$ is $(\mf{B},\alpha,\eps)$-locally stable, and so we can assume that $d_{\mf{B}}(\mc{F}) > \eps n^r$. By  Theorem~\ref{FisFhatstable} there exists $\hat{\mc{F}} \subseteq \mc{F}$ such that $|\hat{\mc{F}}| \geq |\mc{F}|-\eps' n^r$, where we choose $\eps':=\frac{\alpha}{2(\alpha+1)}\eps$. As $\hat{\mf{F}}$ is $(\mf{B},\alpha)$-stable, we have 
\begin{align*}
|\mc{F}| &\leq |\hat{\mc{F}}| + \eps' n^r \leq m(\mf{B},n) - \alpha d_{\mf{B}}(\hat{\mc{F}})+ \eps' n^r \\&\leq m(\mf{B},n) - \alpha( d_{\mf{B}}(\mc{F}) - \eps'n^r)+ \eps' n^r \\&= m(\mf{B},n) - \frac{\alpha}{2} d_{\mf{B}}(\mc{F}) + \left((\alpha+1)\eps'n^r-\frac{\alpha}{2} d_{\mf{B}}(\mc{F})\right)\\&\leq m(\mf{B},n) - \frac{\alpha}{2} d_{\mf{B}}(\mc{F}), 
\end{align*}
where the last inequality holds by the choice of $\eps'$. This concludes the proof of the theorem.
\end{proof}

\end{section}

\bibliographystyle{amsplain}
\bibliography{lib}
\end{document}